\documentclass{article}

\usepackage{graphicx} 
\usepackage[utf8]{inputenc}
\usepackage{amsmath}
\usepackage{amsthm}
\usepackage{hyperref}
\usepackage{amssymb,graphicx}
\usepackage{tensor}
\usepackage{mathtools}
\usepackage{tikz}
\usepackage{amsmath}
\usepackage{amsfonts}
\usepackage{pdfpages}
\usepackage{mathabx}
\usepackage{mathtools}
\usepackage{amssymb}
\usepackage{hyperref}
\usepackage{textcomp}
\usepackage{amsthm}
\usepackage{mathrsfs}
\usepackage{stmaryrd}
\usepackage{frcursive}
\usepackage[T1]{fontenc}
\usepackage{multirow,tabularx}
\usepackage{ upgreek }
\usepackage{amsmath}
\usepackage{tikz}
\usepackage{amsmath}
\usepackage{caption}
\usepackage{amsmath, amssymb}
\usepackage{amsmath, amsthm, amssymb}
\usepackage{mathtools}
\usepackage{titlesec}
\usepackage{mathtools}
\usepackage{mleftright}

\usepackage{amsthm}
\usepackage{sectsty}
\sectionfont{\centering}
\subsectionfont{\centering}
\newtheorem{remark}{Remark}[section]

\newtheorem{definition}{Definition}[section]
\newtheorem{theorem}{Theorem}[section]
\newtheorem{corollary}{Corollary}[theorem]

\newtheorem{lemma}[theorem]{Lemma}
\newtheorem{prop}{Proposition}[section]
\newtheorem{problem}{Problem}[section]

\newtheorem{proposition}[theorem]{Proposition}
\newtheorem{example}[theorem]{Example}

\theoremstyle{definition}

\theoremstyle{remark}

\usepackage{titlesec}
\titleformat{\section}{\normalfont\Large\bfseries}{\thesection}{1em}{}
  
\titlespacing{\section}{0pt}{\parskip}{hhhhh gg y-\parskip}
\titlespacing{\subsection}{0pt}{\parskip}{-\parskip}
\titlespacing{\subsubsection}{0pt}{\parskip}{-\parskip}

\usepackage{setspace}
\theoremstyle{plain}
\usepackage{amsthm}

\title{\textbf{Affine Nilpotency and Engel’s Theorem for Lie Affgebras}}
\author{{\bf Tarik Anowar$^{1}$,~Ripan Saha$^{2}$\footnote{ Corresponding author:~~Email: ripanjumaths@gmail.com}},~~\bf Sayan Thokdar$^{3}$
        \\
{\small 1.  Department of Mathematics, Raiganj University, Raiganj 733134, West Bengal, India}\\
{\small 2. Department of Mathematics, Raiganj University, Raiganj 733134, West Bengal, India}\\
{\small 3. Department of Mathematics, Indian Institute of Science Education and Research} \\{\small(IISER-Pune), Pune 411008,  Maharashtra, India.}}

\begin{document}
\newpage

\maketitle

\begin{center}
\begin{minipage}{12.cm}
\begin{center}{\bf ABSTRACT}\end{center}
We study some structural properties of Lie affgebras as affine analogues of Lie algebras. We introduce the notion of an ideaf, the affine counterpart of an ideal, and establish characterizations of left and right ideafs. We further study centers, quotient structures, and product ideafs, proving, under suitable conditions, that the center of a Lie affgebra is an ideaf. We then develop the notion of affine nilpotency and establish connections between the nilpotency of Lie affgebras and that of their retracted Lie algebras. As an application, we prove an Engel-type theorem for Lie affgebras of the form $\mathfrak{a}(\mathfrak{g};\kappa=2\lambda,\lambda,s)$.

\medskip

{\bf Key words}: Lie Algebra, Lie Affgebra, Ideaf, Affine Nilpotency, Engel's Theorem.
\medskip

 {\bf Mathematics Subject Classification (2020):} 17B40, 14R10.
\end{minipage}
\end{center}
\normalsize\vskip0.5cm
     
\section{INTRODUCTION}

Affine geometry provides a natural framework for studying geometric structures independently of any distinguished origin. Motivated by this principle, the notion of an \emph{affgebra} \cite{And2, Brz222} was introduced as an algebraic counterpart of affine spaces, replacing linear dependence on a base point by intrinsic affine and heap-theoretic operations. This approach offers a basepoint-free formulation of algebraic structures and allows one to extend classical theories from vector spaces to affine spaces in a natural and geometric manner.

The origins of this idea can be traced to the theory of heaps (or torsors)\cite{Pru1, Bae1}, which are algebraic structures equipped with a ternary operation satisfying the Mal’cev and associativity identities. Heaps capture the essence of group structures without requiring a fixed identity element. Every group gives rise to a heap, and conversely, every non-empty heap admits a group structure once a distinguished element is chosen. This close relationship makes heaps a fundamental tool for studying affine and basepoint-free algebraic systems.

In recent years, Brzezi\'{n}ski and his collaborators \cite{Brz2019Truss, BBRS1, Bre1, And1, Brz1, Brz2, Brz3} have developed a systematic theory of affine spaces using heaps and introduced the concept of affgebras \cite{And2, Brz222, Brz26} as algebraic structures defined on affine spaces rather than vector spaces. This framework provides an intrinsic and coordinate-free approach to algebraic geometry and non-associative algebra. Furthermore, the theory of trusses \cite{Brz22}, which enriches heaps with compatible multiplication, has unified several algebraic systems, including groups, rings, and braces.

The study of Lie affgebras \cite{Gra1} originated in the investigation of vector-space-valued Lie brackets on affine spaces and their applications to the differential geometry of affine fibre bundles and frame-independent formulations of Lagrangian mechanics \cite{Tul1, Gra3, Gra2}. Subsequently, a more general approach was proposed in which Lie affgebras are defined without assuming the existence of an underlying vector space. In this setting, vector spaces arise only as tangent or fibre structures associated with chosen points of the affine space. The bi-affine Lie bracket induces Lie algebra structures on these fibres, allowing a Lie affgebra to be viewed as a Lie-algebra-fibred affine space. This perspective reveals deep connections between affine geometry and Lie theory and has led to a systematic study of the relationship between Lie affgebras and their associated Lie algebras.

One of the central results in the classical theory of Lie algebras is Engel’s theorem \cite{Hum}, which characterizes nilpotent Lie algebras in terms of the nilpotency of adjoint representations. This theorem plays a fundamental role in the structure theory of Lie algebras and has far-reaching consequences in representation theory and geometry. Given the close relationship between Lie affgebras and Lie algebras on their fibres, it is natural to ask whether analogous structural results hold in the affine setting.

In this paper, we introduce the notion of an \emph{ideaf}, which may be regarded as the affine analogue of an ideal in Lie algebra theory \cite{Jac1}. We establish several equivalent characterizations of left and right ideafs and investigate their structural properties. Using this notion, we define the center of a Lie affgebra and study abelian Lie affgebras. In particular, we prove that the center $\mathfrak{Z}(\mathfrak{a})$ is an ideaf of a Lie affgebra $\mathfrak{a}$ whenever the characteristic of the base field is not equal to $2$ and $\kappa=2\lambda$, $[x, \lambda(y)]=0,~ \text{for all}~ x, y \in \mathfrak{a}$ and $\text{ad}_s=0$.

We further develop the theory of quotient affgebras via ideafs and introduce the notion of product ideafs. A characterization of product ideafs in terms of ideals of associated Lie algebras is obtained. These results provide a systematic framework for studying substructures and factor structures in the category of Lie affgebras.

Moreover, we define and investigate the concept of affine nilpotency for Lie affgebras and present several illustrative examples. We show that the retract of an affine nilpotent Lie affgebra is a nilpotent Lie algebra. We also establish the converse under certain additional restrictions.
 For a Lie affgebra $\mathfrak{a}(\mathfrak{g}; \kappa, \lambda, s)$, we show that if $\mathfrak{a}(\mathfrak{g}; \kappa, \lambda, s)$ is affinely nilpotent, then all of its elements are affinely ad-nilpotent. Conversely, we prove that if all elements of $\mathfrak{a}(\mathfrak{g}; \kappa=2\lambda, \lambda, s)$ are affinely ad-nilpotent, then $\mathfrak{a}(\mathfrak{g}; \kappa=2\lambda, \lambda, s)$ is an affinely nilpotent Lie affgebra. This result may be viewed as an affine analogue of Engel’s theorem.

At present, we are unable to establish a full Engel-type theorem for arbitrary linear maps $\kappa$ and $\lambda$ satisfying generalized derivation identity in the sense of Leger and Luks \cite{Leg1}. This problem is formulated as an open question and is expected to stimulate further research in the structure theory of Lie affgebras.

The paper is organized as follows. In Section~2, we recall basic notions concerning heaps, affine spaces, and Lie affgebras. Section~3 is devoted to the study of ideafs. In Section~4, we discuss the center and quotient affgebras. In Section~5, we develop the notion of product ideafs and present some related results. In Section~6, we investigate affine nilpotency and affine adjoint maps. In Section~7, we prove our Engel-type theorem for Lie affgebras $\mathfrak{a}(\mathfrak{g}; \kappa=2\lambda, \lambda, s)$. Finally, we discuss open problems and future directions.

\section{PRELIMINARIES}
In this section, we recall some basic definitions and results that will be used throughout the paper. 
Throughout the paper, $\mathbb{K}$ denote an arbitrary field of characteristic not equal to $2$, unless otherwise specified.
\begin{definition} \cite{Pru1,Brz2019Truss, Brz22}
A \emph{heap} is a pair $(H,[\ -\ -\ -])$ consisting of a set $H$ and a ternary operation $[\ -\ -\ -] : H \times H \times H \to H$, $(x, y, z) \mapsto [x, y, z]$, satisfying the following conditions, for all $v, w, x, y, z \in H$:
\begin{enumerate}
    \item $[v, w, [x, y, z]] = [[v, w, x], y, z] $ (Associativity),
    \item $[x, x, y] = y = [y, x, x]$ (Mal’cev identities).
\end{enumerate}

\end{definition}
\begin{definition}
    A heap $(H, [\ -\ -\ -])$ is said to be \emph{Abelian,} if, for all $x, y, z$, ~$[x, y, z] = [z, y, x].$

\end{definition}
\begin{definition}
    A \emph{heap morphism} from $(H, [\ -\ -\ -])$ to $(\widetilde{H}, [\ -\ -\ -])$ is a function $\varphi : H \to \widetilde{H}$
satisfying $\varphi([x, y, z]) = [\varphi(x), \varphi(y), \varphi(z)]$, for all $x, y, z\in H$.

\end{definition}

\begin{definition}\label{Def2.4}\cite{And2,BBRS1, Bre1, Brz1} An \textbf{affine space} over a field $\mathbb{K}$ is a non-empty abelian heap $H$ together with a ternary action $\triangleleft :\mathbb{K}\times H\times H \rightarrow H,$ $(\alpha, x, y)\mapsto \alpha \triangleleft_{x}y$, satisfying the following conditions, for all $x, y, z \in H$ and $\alpha, \beta \in \mathbb{K},$ 
    \begin{enumerate}
        \item $\alpha  \triangleleft_{x}- : H\rightarrow H$ and $-\triangleleft_{x}y:\mathbb{K}\rightarrow H$ are heap homomorphisms
        , where $\mathbb{K}$ is a field, hence is a group, then it can be understood as a heap by the operation $\alpha -\beta + \gamma$;
        \item $(\alpha \beta)\triangleleft_{x}y=\alpha \triangleleft_{x}(\beta\triangleleft_{x}y),$ ($\mathbb{K}$-associativity);
        \item $\alpha \triangleleft_{x}y=<\alpha\triangleleft_{z}y, \alpha\triangleleft_{z}x,x>$, (Base change $x\rightarrow z$ property);
        \item $0\triangleleft_{x}y=x$ $\&$ $ 1\triangleleft_{x}y=y.$
    \end{enumerate}
\end{definition}
\begin{definition}
    An affine map $f: H\rightarrow \widetilde{H}$ is a heap homomorphism preserving the actions in the sense that, for all $\alpha \in \mathbb{K}$, and for all $x, y \in H$,
    $f(\alpha \triangleright_{x}y)=\alpha\triangleright_{f(x)}f(y).$
\end{definition}
\begin{remark}
   The set of all affine endomorphism from $H$ to $\widetilde{H}$ is denoted by $Aff(H, \widetilde{H})$. The standard (traditional) definition of affine space is equivalent to this definition of affine space. If we assume $\mathbb{K}$ to be a commutative ring with unity and $H$ satisfy all the axioms of the definition (\ref{Def2.4}), then we say $H$ is an \emph{affine $\mathbb{K}$-module}. It can be shown that the ternary operation $\alpha \triangleleft_{-}y: H\rightarrow H$ is a heap homomorphism (Middle entry heap homomorphism). 
\end{remark}
\begin{definition}\cite{Brz1}
    Let $\mathbb{K}$ be a field. An \textbf{(associative) $\mathbb{K}$-affgebra} or simply an \textbf{affgebra} is an affine space $H$ together with a bi-affine (associative) multiplication $H\times H\rightarrow H.$
\end{definition}
\begin{lemma}\cite{Brz1}
    Let $f, g, h: H\rightarrow \widetilde{H}$ be homomorphism of affine $\mathbb{K}$-modules. Then $<f, g, h>: H\rightarrow \widetilde{H},$ $x\mapsto <f(x), g(x), h(x)>$ is a homomorphism of affine $\mathbb{K}$-modules.
\end{lemma}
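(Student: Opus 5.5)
The plan is to check the two defining properties of an affine-module homomorphism directly and pointwise. First, that $\langle f,g,h\rangle$ is a heap homomorphism. Second, that it preserves the actions, i.e.
\[
\langle f,g,h\rangle(\alpha\triangleleft_x y)=\alpha\triangleleft_{\langle f,g,h\rangle(x)}\langle f,g,h\rangle(y).
\]
The only structural input is that $\widetilde{H}$ is an \emph{abelian} heap. Throughout, $\langle -,-,-\rangle$ denotes its heap operation.

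\textbf{Heap homomorphism.} Since $f,g,h$ are heap morphisms, each of $f([x,y,z])$, $g([x,y,z])$, $h([x,y,z])$ splits as a bracket of the images of $x,y,z$. So the left side becomes
\[
\langle [f(x),f(y),f(z)],\,[g(x),g(y),g(z)],\,[h(x),h(y),h(z)]\rangle .
\]
In an abelian heap the ternary operation is itself a heap morphism $\widetilde{H}^3\to\widetilde{H}$; this is the interchange (medial) law. To see it, fix a base point $e$ and pass to the associated abelian group, where $\langle a,b,c\rangle=a-b+c$. The medial identity then amounts to rearranging a sum of nine terms with signs, and rearranging gives $\langle\langle f(x),g(x),h(x)\rangle,\langle f(y),g(y),h(y)\rangle,\langle f(z),g(z),h(z)\rangle\rangle$.

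\textbf{Preservation of the action.} I would again fix a base point, choosing $e=\langle f,g,h\rangle(x)$, or simply work in the tangent vector space at $f(x)$. By the standard equivalence recalled in the remark after Definition~\ref{Def2.4}, we have $\alpha\triangleleft_a b=a+\alpha(b-a)$ in vector terms. Each of $f,g,h$ is an affine map, so $f(\alpha\triangleleft_x y)=f(x)+\alpha(f(y)-f(x))$, and similarly for $g$ and $h$. Taking the alternating sum $f-g+h$ of these three expressions gives
\[
\langle f,g,h\rangle(x)+\alpha\bigl(\langle f,g,h\rangle(y)-\langle f,g,h\rangle(x)\bigr),
\]
which is exactly $\alpha\triangleleft_{\langle f,g,h\rangle(x)}\langle f,g,h\rangle(y)$. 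This step uses only that $\alpha$ distributes over the vector-space operations.

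\textbf{Main obstacle.} The main difficulty is justifying the reduction to vector/group coordinates purely from the heap axioms of Definition~\ref{Def2.4}, rather than taking the reduction for granted. If an axiomatic argument is preferred, I would instead use the listed axioms directly:
\begin{itemize}
\item the base change property, to move every action to a common base point;
\item the facts that $\alpha\triangleleft_x-$ and the middle entry $\alpha\triangleleft_-y$ are heap homomorphisms, from the remark after Definition~\ref{Def2.4}.
\end{itemize}
These combine with the medial law to give the same identity. The coordinate route is cleaner, so I would present that and cite the equivalence.
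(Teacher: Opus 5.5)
The paper gives no proof of this lemma; it is quoted from the literature in the preliminaries, so there is no in-paper argument to compare against. Your proof is correct.

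For the heap part, the medial law in the abelian heap $\widetilde{H}$ is exactly what is needed. For the action, the alternating sum $f-g+h$ of $f(x)+\alpha(f(y)-f(x))$ and its analogues for $g,h$ is indeed $\alpha\triangleleft_{\langle f,g,h\rangle(x)}\langle f,g,h\rangle(y)$.

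The ``main obstacle'' you flag is not really one, and you do not need the fallback axiomatic argument. Work in $T_o\widetilde{H}$, where $\langle a,b,c\rangle=a-b+c$ and $\alpha\cdot a=\alpha\triangleleft_o a$. There, the base-change axiom (3) of Definition~\ref{Def2.4} with $z=o$ reads $\alpha\triangleleft_a b=\langle \alpha\triangleleft_o b,\ \alpha\triangleleft_o a,\ a\rangle$. This is literally $\alpha b-\alpha a+a=a+\alpha(b-a)$, so the coordinate reduction follows in one line from the listed axioms.

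The same computation goes through verbatim when $\mathbb{K}$ is only a commutative ring, which is the generality of ``affine $\mathbb{K}$-modules'' in the statement. In that case $T_o\widetilde{H}$ is a $\mathbb{K}$-module rather than a vector space.
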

\begin{lemma}\label{0}\cite{Brz1}
     Let $\mathfrak{a}$ be an affine space over $\mathbb{K}$. The set \textbf{Aff($\mathfrak{a}$)} of all affine endomorphisms of $\mathfrak{a}$ is an affgebra with the pointwise heap operation, the action $(\alpha \triangleleft_{f}g)(a)=\alpha \triangleleft_{f(a)}g(a),$ for all $\alpha \in \mathbb{K}$, $f, g \in \textbf{Aff($\mathfrak{a}$)}$, $a\in \mathfrak{a}$, and the multiplication is given by composition.
\end{lemma}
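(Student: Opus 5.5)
We need to show that $\textbf{Aff}(\mathfrak{a})$, with pointwise heap operation, pointwise action $(\alpha \triangleleft_f g)(a)=\alpha\triangleleft_{f(a)}g(a)$ and composition as product, is an affine space and that composition is bi-affine and associative. The plan is to reduce everything to pointwise statements in $\mathfrak{a}$ and then deal separately with the one non-pointwise ingredient, composition.

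\textbf{Closure.} First I check that the operations do not leave $\textbf{Aff}(\mathfrak{a})$. For the heap bracket $<f,g,h>$ this is exactly the preceding lemma. For $\alpha\triangleleft_f g$, I verify directly that $a\mapsto \alpha\triangleleft_{f(a)}g(a)$ preserves brackets and actions. Two ingredients do this.
\begin{itemize}
\item Use that $\alpha\triangleleft_x y$ is jointly a heap homomorphism in $(x,y)$: the right entry is covered by axiom 1, and the middle entry by the Remark. Abelianness of the heap then gives
\[
\alpha\triangleleft_{<x,x',x''>}<y,y',y''> \;=\; <\alpha\triangleleft_x y,\ \alpha\triangleleft_{x'}y',\ \alpha\triangleleft_{x''}y''>.
\]
\item For compatibility with the action $\beta\triangleleft_{a}b$, work in a vector space retract at a fixed base point. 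By the equivalence with traditional affine spaces noted in the Remark, $\alpha\triangleleft_x y = x+\alpha(y-x)$, and affine maps are linear plus constant. The identity then becomes the commutation $\alpha\beta=\beta\alpha$ in $\mathbb{K}$.
\end{itemize}

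\textbf{Affine space axioms.} Each axiom (abelian heap, homomorphism conditions, $\mathbb{K}$-associativity, base change, $0\triangleleft_f g=f$, $1\triangleleft_f g=g$) is an equation between functions. Evaluating at $a\in\mathfrak{a}$ turns it into the same axiom in $\mathfrak{a}$ with $f(a),g(a),h(a)$ in place of $x,y,z$. These axioms are therefore inherited verbatim, and non-emptiness holds because the identity lies in $\textbf{Aff}(\mathfrak{a})$.

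\textbf{Multiplication.} Composition is associative and preserves affine maps. Right affinity is pointwise again: $<f,g,h>\circ k=<f\circ k,g\circ k,h\circ k>$ and $(\alpha\triangleleft_f g)\circ k=\alpha\triangleleft_{f\circ k}(g\circ k)$, both checked by evaluation. Left affinity is the step that uses affineness of the outer map $k$:
\[
k\circ <f,g,h>=<k f,k g,k h>, \qquad k\circ(\alpha\triangleleft_f g)=\alpha\triangleleft_{kf}(kg),
\]
which follows because $k$ preserves brackets and actions, applied at each point.

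\textbf{Main obstacle.} The only genuinely delicate point is the closure of the action, i.e.\ that $\alpha\triangleleft_f g$ is again affine. It requires a mixed interchange law between $\alpha\triangleleft$ and $\beta\triangleleft$ at varying base points. I would handle it via the base-point retract as above rather than axiomatically.
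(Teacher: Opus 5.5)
The paper gives no proof of this lemma; it is quoted from the literature, so there is no internal argument to compare yours with. Your pointwise plan is the standard verification and is correct in outline. It consists of closure of $\textbf{Aff}(\mathfrak{a})$ under the operations, the affine-space axioms by evaluation at points, and composition being associative and affine in each slot, with left affinity using that the outer map is affine. Handling the mixed interchange law $\alpha\triangleleft_{\beta\triangleleft_x x'}(\beta\triangleleft_y y')=\beta\triangleleft_{\alpha\triangleleft_x y}(\alpha\triangleleft_{x'} y')$ in a retract, where it reduces to commutativity of $\mathbb{K}$, is legitimate given the equivalence with the traditional definition.

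One step is justified incorrectly. Being a heap homomorphism in each entry separately, together with abelianness, does \emph{not} make a map a heap homomorphism jointly in $(x,y)$. Expanding entry by entry gives nine terms, and the cross terms need not cancel. For instance, on $\mathbb{K}$ with $\langle x,y,z\rangle=x-y+z$, the map $(x,y)\mapsto xy$ is a heap homomorphism in each variable. Yet $(x-x'+x'')(y-y'+y'')\neq xy-x'y'+x''y''$ for $x=y''=1$, $x'=x''=y=y'=0$.

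What removes the cross terms for $\triangleleft$ is base change. Fix $z$ and let $A=\alpha\triangleleft_z -$, which is a heap endomorphism by axiom 1. Then $\alpha\triangleleft_x y=\langle A(y),A(x),x\rangle$. The medial law of abelian heaps, $\langle\langle a,b,c\rangle,\langle d,e,f\rangle,\langle g,h,i\rangle\rangle=\langle\langle a,d,g\rangle,\langle b,e,h\rangle,\langle c,f,i\rangle\rangle$, then yields exactly your identity $\alpha\triangleleft_{\langle x,x',x''\rangle}\langle y,y',y''\rangle=\langle\alpha\triangleleft_x y,\alpha\triangleleft_{x'}y',\alpha\triangleleft_{x''}y''\rangle$. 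Alternatively, read it off from $\alpha\triangleleft_x y=(1-\alpha)x+\alpha y$ in the retract you already use for the second bullet.

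With this repair your proof is complete. Both closure facts are instances of one principle: affine spaces form an entropic variety, meaning all their operations commute with one another. That is precisely why hom-sets are closed under the pointwise operations.
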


\begin{definition} \cite{And2}
     Let $\mathfrak{a}$ be an \textbf{affine space} over $\mathbb{K}$. A \emph{Lie bracket} on $\mathfrak{a}$ is a binary operation $\{-,-\}:\mathfrak{a} \times \mathfrak{a} \rightarrow \mathfrak{a}$ that satisfies the following conditions:
     \begin{enumerate}
         \item For all $a\in \mathfrak{a},$ both $\{a, -\}$ and $\{-, a\}$ are affine transformations;
         \item Affine antisymmetry: for all $a, b \in \mathfrak{a}$, $<\{a, b\}, \{a, a\}, \{b, a\}>=\{b, b\}$;
         \item Affine Jacobi identity: for all $a, b, c \in \mathfrak{a}$, $<\{a, \{b, c\}\}, \{a, \{a, a\}\}, \{b, \{c,a\}\}, \{b, \{b, b\}\}, \{c, \{a, b\}\}>=\{c, \{c, c\}\}.$
     \end{enumerate}
     An affine space together with an affine Lie bracket is called a Lie affgebra.
 \end{definition}
  \begin{remark}
     The affine antisymmetry condition (2) of the above definition can be equivalently stated as 
     $\{a, b\}=<\{a, a\}, \{b, a\}, \{b, b\}>$.
 \end{remark}
 \begin{proposition}\cite{Brz1}
      An associative $\mathbb{K}$-affgebra $H$ is a Lie affgebra with the bracket $\{x, y\}=<xy, yx, y>$ for all $x, y \in H.$
 \end{proposition}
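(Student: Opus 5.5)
The plan is to pass to a vector space by fixing a base point, translate the three Lie affgebra axioms into linear identities, and check them there. Fix a point $o\in H$. The abelian heap $H$ then becomes an abelian group with zero $o$, via $x+y=\langle x,o,y\rangle$ and $-x=\langle o,x,o\rangle$. The action $\alpha\triangleleft_o x$ gives scalar multiplication, so $H$ is a vector space $V$ over $\mathbb{K}$. In $V$ a ternary bracket reads $\langle a,b,c\rangle=a-b+c$. An affine map $f$ has the form $f(x)=\ell(x)+f(o)$ with $\ell$ linear.

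A bi-affine associative multiplication then takes the form
\[
xy=x\cdot y+\lambda(x)+\rho(y)+e,
\]
where $x\cdot y$ is bilinear, $\lambda,\rho$ are linear and $e\in V$ is a constant. Rather than expanding this, I will verify the axioms by direct heap manipulation, which avoids tracking the parameters.

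\textbf{Bi-affinity.} For fixed $a$, the map $x\mapsto\{a,x\}=\langle ax,xa,x\rangle$ is the pointwise bracket $\langle L_a,R_a,\mathrm{id}\rangle$. Here $L_a$ and $R_a$ are affine by bi-affinity of the product, and $\mathrm{id}$ is affine. By the first lemma of Brzezi\'nski cited above, $\{a,-\}$ is affine. For $\{-,a\}$, write $x\mapsto\langle xa,ax,a\rangle$ as $\langle R_a,L_a,c_a\rangle$, where $c_a$ is the constant map with value $a$. Constant maps are affine, so $\{-,a\}$ is affine as well.

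\textbf{Affine antisymmetry.} Since the product is associative, $aa$ commutes with itself and $\{a,a\}=\langle aa,aa,a\rangle=a$. The condition therefore becomes $\langle\{a,b\},a,\{b,a\}\rangle=b$. In $V$ this reads
\[
(ab-ba+b)-a+(ba-ab+a)=b,
\]
which holds identically.

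\textbf{Affine Jacobi identity.} Again $\{a,\{a,a\}\}=\{a,a\}=a$, and likewise for $b$ and $c$. So the identity reduces to
\[
\{a,\{b,c\}\}-a+\{b,\{c,a\}\}-b+\{c,\{a,b\}\}=c,
\]
that is, $\sum_{\mathrm{cyc}}\big(\{a,\{b,c\}\}-a\big)=0$. Write $u=\{b,c\}=bc-cb+b$. Then
\[
\{a,u\}=au-ua+u=a(bc)-a(cb)+ab-(bc)a+(cb)a-ba+bc-cb+b .
\]
Here I use bi-affinity in the form $a(x-y+z)=ax-ay+az$, which is exactly how a bi-affine product distributes over ternary heap combinations.

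\textbf{Main obstacle: the cyclic sum.} Summing $\{a,\{b,c\}\}-a$ cyclically requires care. The associator-type terms $a(bc)-a(cb)-(bc)a+(cb)a$ cancel cyclically by associativity and the usual Jacobi argument for commutators. The terms $ab-ba+bc-cb$ cancel cyclically as well. What remains is $\sum_{\mathrm{cyc}}(b-a)=0$.

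The hard part is checking that triple products like $a(bc)$ expand consistently. Each expression $a(x-y+z)$ is a genuine heap combination with coefficients summing to $1$. Hence all constants coming from $\lambda,\rho,e$ match up, and the formal cancellation is legitimate. This completes the verification that $\{x,y\}=\langle xy,yx,y\rangle$ makes $H$ a Lie affgebra.
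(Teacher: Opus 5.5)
The paper does not prove this statement (it is quoted from \cite{Brz1}), so your argument has to stand on its own. The strategy is sound, but the Jacobi step contains an error. You write $u=\{b,c\}=bc-cb+b$, whereas by definition $\{b,c\}=\langle bc,cb,c\rangle=bc-cb+c$. With your $u$, the claim that the terms $ab-ba+bc-cb$ cancel cyclically is false. Their cyclic sum is $2\,(ab-ba+bc-cb+ca-ac)$, which is nonzero in general: for $a=E_{11}$, $b=E_{12}$, $c=0$ in $M_2(\mathbb{K})$ it equals $2E_{12}$. So, as written, the verification of the affine Jacobi identity fails, because you are expanding $\{a,\langle bc,cb,b\rangle\}$ rather than $\{a,\{b,c\}\}$.

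The fix is immediate. Take $u=bc-cb+c$ and use that $L_a$ and $R_a$ are heap morphisms:
\[
\{a,\{b,c\}\}=a(bc)-a(cb)+ac-(bc)a+(cb)a-ca+bc-cb+c .
\]
Now sum cyclically and check the three groups of terms:
\begin{itemize}
\item The twelve triple products cancel in pairs by associativity alone, e.g.\ $a(bc)=(ab)c$, $a(cb)=(ac)b$, $(bc)a=b(ca)$.
\item The terms $ac-ca+bc-cb$ have vanishing cyclic sum, since each of $ab,ba,bc,cb,ca,ac$ occurs once with each sign.
\item The remaining terms give $\sum_{\mathrm{cyc}}(c-a)=0$.
\end{itemize}

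This pairwise cancellation is also why your closing worry about constants from $\lambda,\rho,e$ is unnecessary. The only structural input is $a\langle x,y,z\rangle=\langle ax,ay,az\rangle$ and its right-hand analogue, so the parametrization $xy=x\cdot y+\lambda(x)+\rho(y)+e$ is never used.

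Your bi-affinity and antisymmetry checks are correct. One small point: $\{a,a\}=\langle aa,aa,a\rangle=a$ follows from the Mal'cev identity, not from associativity.
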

\begin{corollary}
    In Lemma \ref{0} , \textbf{Aff($\mathfrak{a}$)} is an affgebra which is also Lie affgebra with the bracket $\{f, g\}=<fg, gf, g>$, for all $f, g \in \textbf{Aff($\mathfrak{a}$)}$.
\end{corollary}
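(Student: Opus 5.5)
The corollary follows by combining Lemma \ref{0} with the preceding proposition, so the plan is essentially one observation plus a check of hypotheses. By Lemma \ref{0}, \textbf{Aff($\mathfrak{a}$)} is an affine space with the pointwise heap operation $<f,g,h>(a)=<f(a),g(a),h(a)>$ and the action $(\alpha\triangleleft_f g)(a)=\alpha\triangleleft_{f(a)}g(a)$. Its multiplication is composition. Composition of maps is associative, so \textbf{Aff($\mathfrak{a}$)} is an associative $\mathbb{K}$-affgebra, provided composition is bi-affine. The proposition from \cite{Brz1} then applies directly with $H=\textbf{Aff($\mathfrak{a}$)}$, $xy=x\circ y$, and the bracket $\{f,g\}=<fg,gf,g>$ is a Lie bracket.

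The key steps, in order, are as follows.

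\emph{Step 1 (closure).} Check that $\{f,g\}$ again lies in \textbf{Aff($\mathfrak{a}$)}. Composites of affine maps are affine, so $fg$, $gf$ and $g$ are affine endomorphisms. The lemma from \cite{Brz1} stating that $<f,g,h>$ is a homomorphism of affine $\mathbb{K}$-modules then shows that $<fg,gf,g>$ is affine.

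\emph{Step 2 (associativity).} Associativity of the product is simply associativity of composition of functions.

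\emph{Step 3 (bi-affinity).} Verify that composition is bi-affine, i.e.\ that $f\mapsto f\circ g$ and $g\mapsto f\circ g$ are affine maps of \textbf{Aff($\mathfrak{a}$)}.

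For Step 3, right composition $f\mapsto f\circ g$ is affine for trivial pointwise reasons: the operations are defined pointwise, so evaluating at $g(a)$ gives
\[ <f_1,f_2,f_3>\circ g=<f_1\circ g,f_2\circ g,f_3\circ g>, \]
and similarly for the action. Left composition $g\mapsto f\circ g$ needs the fact that $f$ is itself a heap morphism preserving the action. This gives
\[ f(<g_1(a),g_2(a),g_3(a)>)=<fg_1(a),fg_2(a),fg_3(a)> \]
and
\[ f(\alpha\triangleleft_{g_1(a)}g_2(a))=\alpha\triangleleft_{fg_1(a)}fg_2(a). \]
Since Lemma \ref{0} already asserts that \textbf{Aff($\mathfrak{a}$)} is an affgebra under composition, Step 3 can be taken from there rather than reproved.

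With these steps in place, the proposition yields affine antisymmetry and the affine Jacobi identity for $\{f,g\}=<fg,gf,g>$, and the corollary follows. I expect the only point needing care to be Step 3, specifically left composition. If one did not want to rely on Lemma \ref{0} for it, the check would be the explicit pointwise computation above, using that $f$ is an affine map.
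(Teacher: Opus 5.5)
Your proposal is correct and takes the same route as the paper. The paper gives no separate proof: the corollary follows from Lemma~\ref{0}, which makes \textbf{Aff}$(\mathfrak{a})$ an associative affgebra under composition, together with the preceding proposition, which gives the bracket $\{f,g\}=<fg,gf,g>$. Your extra checks of closure and of bi-affinity of left and right composition are accurate, but they are already contained in Lemma~\ref{0}.
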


\begin{remark}\cite{And2}
    Let $\mathfrak{a}$ be an affine space over $\mathbb{K}.$ Fixed an element $o\in \mathfrak{a},$ let us define a binary operation $+: \mathfrak{a} \times \mathfrak{a} \rightarrow \mathfrak{a}$ and the map $\mathbb{K} \times \mathfrak{a}\rightarrow \mathfrak{a}$ as follows:
    $$a+b:=<a, o, b>, \alpha\cdot a:=\alpha \triangleright_{o}b.$$
    Then, $(\mathfrak{a}, +, \cdot)$ forms a vector space, which is called the tangent space to $\mathfrak{a}$ or the vector
    space fibre of $\mathfrak{a}$ at the point $o$. This tangent space is usually denoted by $T_{o}\mathfrak{a}.$\\

    On the other hand, any vector space $V$ can be understood as an affine space with heap operation $<u, v, w>:=u-v+w$ and the affine action $\alpha\triangleright_{u}v:=(1-\alpha)u+\alpha v$, for all $u, v, w\in V$ and for all $\alpha \in \mathbb{K}.$
\end{remark}
It is proven that any tangent space of a Lie affgebra inherits a natural Lie algebra structure.

\begin{theorem}\cite{And2}
    Let $\mathfrak{a}$ be a Lie affgebra with an affine Lie bracket $\{-, -\}$. Then, for all $o \in \mathfrak{a}$, $T_{o}\mathfrak{a}$ is a Lie algebra with the bracket 
    $$[a, b]:=\{a, b\}-\{a, o\}+\{o, o\}-\{o, b\},$$
    for all $a, b \in T_{o}\mathfrak{a}.$ We call $T_{o}\mathfrak{a}$ with this bracket the Lie algebra tangent to $\mathfrak{a}$ at $o$ or simply a tangent Lie algebra or Lie algebra fible of $\mathfrak{a}.$
\end{theorem}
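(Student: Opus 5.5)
The plan is to work entirely inside the vector space $T_o\mathfrak{a}$, where the heap operation is $\langle u,v,w\rangle = u-v+w$ and $o$ is the zero vector. In these terms each affine condition on $\{-,-\}$ becomes a linear identity. The one thing to keep straight is that a map which is affine for the heap structure is, in $T_o\mathfrak{a}$, linear plus a constant. Concretely, since $b\mapsto\{a,b\}$ is affine,
$$\{a,b\}=\{a,o\}+L_a(b), \qquad L_a(b)=\{a,b\}-\{a,o\},$$
with $L_a$ linear. Likewise $a\mapsto\{a,b\}$ is affine in $a$. Together these show that
$$[a,b]=\{a,b\}-\{a,o\}+\{o,o\}-\{o,b\}$$
is the ``purely bilinear part'' of the bi-affine map $\{-,-\}$. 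Bilinearity of $[-,-]$ then follows directly. It is linear in $b$ because $[a,b]=L_a(b)-L_o(b)$. It is linear in $a$ because $[a,b]=R_b(a)-R_o(a)$, with $R_b(a)=\{a,b\}-\{o,b\}$ linear in $a$. Both expressions agree with the defining formula after rearranging.

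For antisymmetry, translate the affine antisymmetry axiom $\langle\{a,b\},\{a,a\},\{b,a\}\rangle=\{b,b\}$ into
$$\{a,b\}+\{b,a\}=\{a,a\}+\{b,b\}.$$
Next expand $[a,b]+[b,a]$ from the definition and substitute this identity. Applying it again with $(a,o)$ and $(o,b)$ removes the mixed terms $\{a,o\}+\{o,a\}$ and $\{o,b\}+\{b,o\}$. What remains should be $[a,a]+[b,b]$. Using the decomposition $\{a,b\}=\{o,o\}+\ell(a)+r(b)+[a,b]$, where $\ell,r$ are linear, this amounts to showing that $[a,b]+[b,a]=[a,a]+[b,b]$ for the bilinear form $[-,-]$. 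Putting $b=a$ gives nothing new, but the identity is polarization: the quadratic form $q(a)=[a,a]$ satisfies $q(a)+q(b)=[a,b]+[b,a]$, while bilinearity gives $q(a+b)=q(a)+q(b)+[a,b]+[b,a]$. Hence $q(a+b)=2q(a)+2q(b)$. Scaling $a\mapsto ta$ gives $q(ta)=t^2q(a)$; comparing $q(2a)=4q(a)$ with $q(a+a)=4q(a)$ is consistent, so I instead take $b=-a$: this gives $0=q(0)=2q(a)+2q(-a)=4q(a)$. Since $\operatorname{char}\mathbb{K}\neq2$, $q\equiv0$, and therefore $[a,b]=-[b,a]$.

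For the Jacobi identity, translate the affine Jacobi axiom into
$$\{a,\{b,c\}\}+\{b,\{c,a\}\}+\{c,\{a,b\}\}=\{a,\{a,a\}\}+\{b,\{b,b\}\}+\{c,\{c,c\}\}.$$
Then substitute the decomposition $\{x,y\}=k+\ell(x)+r(y)+[x,y]$, with $k=\{o,o\}$, into both sides. Expanding $\{x,\{y,z\}\}$ produces
$$k+\ell(x)+r(k)+r\ell(y)+rr(z)+r[y,z]+[x,k]+[x,\ell(y)]+[x,r(z)]+[x,[y,z]].$$
This is the main obstacle: the lower-order terms must be shown to cancel under the cyclic sum compared with the diagonal terms. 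I would do this by grading. The axiom holds for all $a,b,c$, so replace $(a,b,c)$ by $(ta,tb,tc)$, or better by independent scalings $(sa,tb,uc)$. Both sides are then polynomial in $s,t,u$, and over a field with enough elements one may compare multihomogeneous components. The component that is trilinear in $(a,b,c)$, i.e.\ the coefficient of $stu$, is the cyclic sum $\sum[a,[b,c]]$ on the left. On the right, each term involves only one variable, so its $stu$-coefficient is $0$. This yields the Jacobi identity.

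For small finite fields, the comparison of coefficients has to be replaced by a direct polarization argument. In that case I would apply the axiom to the eight choices of $a,b,c\in\{0,x\}\times\{0,y\}\times\{0,z\}$ and take the alternating sum, which isolates the trilinear part without dividing except by $2$.
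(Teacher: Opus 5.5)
Your proof is correct. The paper does not prove this statement; it quotes it from \cite{And2}, so there is no in-paper argument to compare against. Your argument is self-contained and can be tightened in two places.

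\textbf{Antisymmetry.} The instance $b=o$ of the affine antisymmetry axiom, $\{a,o\}+\{o,a\}=\{a,a\}+\{o,o\}$, is literally the statement $[a,a]=0$. In fact the three substitutions you describe already give
\[
[a,b]+[b,a]=\bigl(\{a,a\}+\{b,b\}\bigr)-\bigl(\{a,a\}+\{o,o\}\bigr)-\bigl(\{o,o\}+\{b,b\}\bigr)+2\{o,o\}=0 .
\]
So the remainder is $0$, not $[a,a]+[b,b]$ still to be analysed. The polarization detour and the appeal to $\mathrm{char}\,\mathbb{K}\neq 2$ are therefore unnecessary, and you get the alternating property directly. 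A small precision point: homogeneity of $L_a$ comes from $\{a,-\}$ preserving the action $\alpha\triangleright_o-$, not merely the heap operation. The axioms do supply this.

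\textbf{Jacobi identity.} The eight-corner alternating sum you keep as a small-field fallback is really the natural main argument, and it needs no division at all (not even by $2$). By linearity of $[a,-]$ and the definition of the bracket,
\[
[a,[b,c]]=\sum_{\epsilon\in\{0,1\}^3}(-1)^{3-|\epsilon|}\{\epsilon_1 a,\{\epsilon_2 b,\epsilon_3 c\}\},\qquad |\epsilon|=\epsilon_1+\epsilon_2+\epsilon_3,\quad 0\cdot x:=o,\ 1\cdot x:=x .
\]
This is exactly the expansion (sixteen terms collapsing to eight) that the paper itself carries out in the proof of Theorem~\ref{affgebra nilpotent to algebra nilpotent}. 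Take this alternating sum of the translated affine Jacobi identity. The cyclic side becomes $\sum_{\mathrm{cyc}}[a,[b,c]]$, and each diagonal term $\{a,\{a,a\}\}$, $\{b,\{b,b\}\}$, $\{c,\{c,c\}\}$ cancels, since it depends on only one of the three variables.

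This works over every field, so the scaling and coefficient-comparison argument can be dropped. If you keep it, note that once $[a,a]=0$ both sides have degree at most $2$ in each of $s,t,u$. Then $|\mathbb{K}|\geq 3$ suffices, which always holds in characteristic $\neq 2$. With both simplifications the statement holds over an arbitrary field.
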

Let $\mathfrak{g}$ be a Lie algebra. A linear map $\lambda: \mathfrak{g}\rightarrow \mathfrak{g}$ is called a generalized derivation in the sense of Leger and Luks \cite{Leg1}, if there exist $\lambda', \lambda'' \in$ End($\mathfrak{g}$) such that
 $$
 [\lambda(a), b]+[a, \lambda'(b)]=\lambda''([a, b]).
 $$

In the following theorem, the connection between the Lie affgebras and Lie algebras with the generalized derivation is established. The following theorem plays crucial role throughout this paper. 
\begin{theorem}\cite{And2}
    Let $(\mathfrak{g}, [-, -])$ be a Lie algebra and $\kappa, \lambda \in$ End$(\mathfrak{g})$ such that, for all $a, b \in \mathfrak{g}$, 
\begin{equation}\label{eq}
    \lambda([a, b])= [\lambda(a), b]+[a, \lambda(b)]-[a, \kappa(b)]
\end{equation}
Then, for all $a, b , c \in \mathfrak{g}, \alpha \in \mathbb{K}$,  $\mathfrak{g}$ is a Lie affgebra with the affine space structure 
\begin{equation}
    <a, b, c>=a-b+c, ~~\alpha\triangleright_{a}b=(1-\alpha)a+\alpha b,
\end{equation} 
and, with the affine Lie bracket, for all $s\in \mathfrak{g}$, and $a, b \in \mathfrak{g},$ 
\begin{equation}
    \{a, b\}=[a, b]+\kappa(a)+\lambda(b-a)+s.
\end{equation}
 We denote this Lie affgebra by $\mathfrak{a}(\mathfrak{g}; \kappa, \lambda, s).$ Furthermore, for all $o \in \mathfrak{g}$\\
 $$T_{o}\mathfrak{a}(\mathfrak{g}; \kappa, \lambda, s)\simeq \mathfrak{g}.$$\\
 Conversely, for any Lie affgebra $\mathfrak{a}$ and any $o\in \mathfrak{a},$ there exists $\kappa, \lambda, s$ necessarily satisfying $(2)$ and such that $\mathfrak{a}=\mathfrak{a}(T_{o}\mathfrak{a}; \kappa, \lambda, s)$.
\end{theorem}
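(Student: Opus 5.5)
We verify that $\mathfrak{g}$ with $\langle a,b,c\rangle=a-b+c$ and $\alpha\triangleright_a b=(1-\alpha)a+\alpha b$ is an affine space, and then check the three Lie bracket axioms for $\{a,b\}=[a,b]+\kappa(a)+\lambda(b-a)+s$. The affine space axioms are standard for a vector space, so the work is in the bracket.

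\emph{Bi-affinity.} For fixed $a$, the map $b\mapsto\{a,b\}=[a,b]+\lambda(b)+(\kappa(a)-\lambda(a)+s)$ is a linear map plus a constant. Hence it preserves $u-v+w$ and $(1-\alpha)u+\alpha v$. The same holds in the other variable, with linear part $[-,b]+\kappa-\lambda$.

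\emph{Affine antisymmetry.} We need $\{a,b\}-\{a,a\}+\{b,a\}=\{b,b\}$. Since $\{a,a\}=\kappa(a)+s$ and $\{b,b\}=\kappa(b)+s$, expanding gives
\[
[a,b]+[b,a]+\kappa(a)+\lambda(b-a)+\kappa(b)+\lambda(a-b)+s-\kappa(a)-s+s=\kappa(b)+s,
\]
which holds by antisymmetry of $[-,-]$.

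\emph{Affine Jacobi identity.} This is the main computation. The identity reads
\[
\sum_{\mathrm{cyc}}\big(\{a,\{b,c\}\}-\{a,\{a,a\}\}\big)=0 ,
\]
once the alternating heap sum is rewritten. The key step is to expand $\{a,\{b,c\}\}-\{a,\{a,a\}\}$. Because $\{a,-\}$ is affine with linear part $[a,-]+\lambda$, this difference equals
\[
[a,\{b,c\}-\{a,a\}]+\lambda(\{b,c\}-\{a,a\}).
\]
Substituting $\{b,c\}-\{a,a\}=[b,c]+\kappa(b)-\kappa(a)+\lambda(c-b)$, the constant $s$ cancels. Summing cyclically, the terms $[a,[b,c]]$ vanish by the Jacobi identity. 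The terms $\sum_{\mathrm{cyc}}\lambda(\kappa(b)-\kappa(a))$ and $\sum_{\mathrm{cyc}}\lambda^2(c-b)$ telescope to $0$. The remaining terms are
\[
\sum_{\mathrm{cyc}}\Big([a,\kappa(b)]-[a,\kappa(a)]+[a,\lambda(c)]-[a,\lambda(b)]+\lambda([b,c])\Big).
\]
We have $[a,\kappa(a)]=0$ only if $\kappa$ commutes suitably, so this term must be tracked as well. Using (\ref{eq}) on $\lambda([b,c])$ and reindexing cyclically, one checks that everything cancels except the cyclic sum of $[a,\kappa(a)]$ terms. Here lies the obstacle I expect: confirming from (\ref{eq}) with $a=b$ that $0=\lambda([a,a])=[\lambda(a),a]+[a,\lambda(a)]-[a,\kappa(a)]=-[a,\kappa(a)]$, so these terms vanish. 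I would carry out this bookkeeping carefully, since the sign conventions of the alternating heap bracket determine which cyclic sums appear.

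\emph{Tangent spaces.} Finally, $T_o\mathfrak{a}$ has addition $a+_ob=a-o+b$. The map $a\mapsto a-o$ is a linear isomorphism $T_o\mathfrak{a}\to\mathfrak{g}$. Computing $\{a,b\}-\{a,o\}+\{o,o\}-\{o,b\}$, the $\kappa$, $\lambda$ and $s$ terms cancel by bi-affinity, leaving $[a-o,b-o]+o$. This shows the isomorphism preserves brackets.

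\emph{Converse.} Fix $o\in\mathfrak{a}$ and identify $\mathfrak{a}$ with $\mathfrak{g}=T_o\mathfrak{a}$, so that $o=0$. Set $s=\{0,0\}$, $\lambda(b)=\{0,b\}-s$ and $\kappa(a)=\{a,0\}-s+\lambda(a)$; these are linear by affineness. Bi-affinity shows that $\{a,b\}-[a,b]$ is the sum of the two one-variable parts, so the bracket has the stated form. Running the Jacobi computation above in reverse forces (\ref{eq}).
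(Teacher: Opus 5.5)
The paper does not prove this statement; it quotes it from \cite{And2}, so there is no in-text argument to compare against. Your direct verification is correct. Bi-affinity, affine antisymmetry and the isomorphism $a\mapsto a-o$ (with tangent bracket $[a-o,b-o]+o$) all check out. The Jacobi computation is also right:
\begin{itemize}
\item the affine Jacobi identity is indeed $\sum_{\mathrm{cyc}}(\{a,\{b,c\}\}-\{a,\{a,a\}\})=0$;
\item after substituting \eqref{eq}, everything cancels except $-\sum_{\mathrm{cyc}}[a,\kappa(a)]$;
\item \eqref{eq} with $b=a$ gives $[a,\kappa(a)]=0$.
\end{itemize}
So the difficulty you anticipate does not arise.

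The one step not justified as written is the \emph{necessarily} in the converse. Running the computation in reverse does not by itself return \eqref{eq}. In the forward direction you used \eqref{eq} pointwise, whereas the affine Jacobi identity only controls a cyclic sum. Put $D(a,b):=\lambda([a,b])-[\lambda(a),b]-[a,\lambda(b)]+[a,\kappa(b)]$. Your computation, carried out without assuming \eqref{eq}, shows that the affine Jacobi identity is equivalent to
$D(a,b)+D(b,c)+D(c,a)=[a,\kappa(a)]+[b,\kappa(b)]+[c,\kappa(c)]$ for all $a,b,c$. Since $D(x,0)=D(0,x)=0$:
\begin{itemize}
\item taking $b=c=0$ gives $[a,\kappa(a)]=0$;
\item then taking $c=0$ gives $D(a,b)=0$, which is \eqref{eq}.
\end{itemize}

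Reusing your computation here also requires the tangent bracket on $T_o\mathfrak{a}$ to satisfy the Jacobi identity. That is the preceding theorem on tangent Lie algebras, so you should cite it. The formula $\{a,b\}=[a,b]+\kappa(a)+\lambda(b-a)+s$ is just the definition of the tangent bracket at $o=0$ rearranged. Bi-affinity is needed only for the linearity of $\kappa$ and $\lambda$.
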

\begin{definition}
    A affine morphism $\varphi:\mathfrak{a}(\mathfrak{g}; \kappa,\lambda,s)\to \mathfrak{a}(\mathfrak{g'}; \kappa',\lambda',s')$ is said be a homomorphism of Lie affgebras if for all $a,b\in \mathfrak{g}$,
    $$
    \varphi\left(\{a,b\}\right) = \{\varphi(a),\varphi(b)\}.
    $$
\end{definition}

\begin{theorem}\cite{And2}
     A function $\varphi:\mathfrak{a}(\mathfrak{g}; \kappa,\lambda,s)\to \mathfrak{a}(\mathfrak{g'}; \kappa',\lambda',s')$ is a homomorphism of Lie affgebras if and only if there exist a Lie algebra homomorphism
$\psi: \mathfrak{g}\to\mathfrak{g'}$ and $q'\in \mathfrak{g'}$, such that
    \begin{equation}
        \psi \kappa = \kappa'\psi,
    \end{equation}
    \begin{equation}
        \psi \lambda = (ad_{q'} +\lambda')\psi,
    \end{equation}
    \begin{equation}
        \psi (s) = s' - q' +\kappa'(q').
    \end{equation}
\end{theorem}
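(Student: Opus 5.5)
The plan is to write $\varphi$ in linear coordinates and compare both sides of the homomorphism identity term by term, according to their degree in $(a,b)$. Since the affine structures on $\mathfrak{g}$ and $\mathfrak{g'}$ are the standard ones, $\langle a,b,c\rangle=a-b+c$ and $\alpha\triangleright_a b=(1-\alpha)a+\alpha b$, an affine map $\varphi:\mathfrak{g}\to\mathfrak{g'}$ has the form
\[
\varphi(a)=\psi(a)+q',\qquad q':=\varphi(0),
\]
with $\psi:=\varphi-\varphi(0)$ linear. This is the usual fact that a heap morphism between abelian groups is a translate of a group homomorphism, and preservation of $\triangleright$ makes $\psi$ $\mathbb{K}$-linear. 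I will take this $\psi$ and this $q'$ as the data in the statement.

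Next I expand both sides of $\varphi(\{a,b\})=\{\varphi(a),\varphi(b)\}$ using the bracket $\{a,b\}=[a,b]+\kappa(a)+\lambda(b-a)+s$. The left side is
\[
\psi([a,b])+\psi\kappa(a)+\psi\lambda(b)-\psi\lambda(a)+\psi(s)+q'.
\]
For the right side, $[q',q']=0$ gives
\[
[\psi a,\psi b]+[\psi a,q']+[q',\psi b]+\kappa'\psi(a)+\kappa'(q')+\lambda'\psi(b)-\lambda'\psi(a)+s'.
\]
Both sides are sums of a bilinear part, a part linear in $a$, a part linear in $b$, and a constant. These parts can be separated by specialisation:
\begin{itemize}
\item putting $a=b=0$ isolates the constants;
\item putting $a=0$ and then $b=0$ isolates the parts linear in $b$ and in $a$;
\item subtracting these leaves the bilinear terms.
\end{itemize}
The resulting system is equivalent to the original identity, and the separation step is the one point that needs care. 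Spelling it out as an explicit inclusion–exclusion over the specialisations avoids any need to use the field to split homogeneous components.

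Reading off the four equations:
\begin{itemize}
\item Constants give $\psi(s)+q'=s'+\kappa'(q')$, which is the third condition.
\item The bilinear part gives $\psi([a,b])=[\psi a,\psi b]$, so $\psi$ is a Lie algebra homomorphism.
\item The $b$-part gives $\psi\lambda(b)=[q',\psi b]+\lambda'\psi(b)$, that is, $\psi\lambda=(\mathrm{ad}_{q'}+\lambda')\psi$.
\item The $a$-part gives $\psi\kappa(a)-\psi\lambda(a)=[\psi a,q']+\kappa'\psi(a)-\lambda'\psi(a)$.
\end{itemize}
Substituting the $b$-part relation into the $a$-part equation, the terms $[q',\psi a]+[\psi a,q']$ cancel by antisymmetry and the $\lambda'\psi$ terms cancel. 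What remains is exactly $\psi\kappa=\kappa'\psi$. This proves the forward direction.

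For the converse, given a Lie algebra homomorphism $\psi$ and $q'$ satisfying the three conditions, define $\varphi(a)=\psi(a)+q'$. This map is clearly affine. Running the computation backwards, the three conditions together with the homomorphism property reproduce each of the four homogeneous components, and hence the full identity $\varphi(\{a,b\})=\{\varphi(a),\varphi(b)\}$. In this direction the $a$-part follows by combining $\psi\kappa=\kappa'\psi$ with the $\lambda$-condition and antisymmetry. I expect no real obstacle beyond this bookkeeping.
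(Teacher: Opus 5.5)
Your argument is correct. The paper itself gives no proof of this result and only quotes it from its source. Your route is the standard direct verification: write $\varphi=\psi+\varphi(0)$ with $\psi$ linear, expand both sides of $\varphi(\{a,b\})=\{\varphi(a),\varphi(b)\}$, and separate the constant, linear and bilinear parts by setting $a$ and $b$ to $0$. Your use of antisymmetry to cancel $[q',\psi a]+[\psi a,q']$ and obtain $\psi\kappa=\kappa'\psi$ is right.

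You also correctly read the statement as including the clause $\varphi(a)=\psi(a)+q'$. The version printed here omits it, and without it the ``if'' direction is false. For example, take $\mathfrak{g}'=\mathfrak{g}\neq 0$, all of $\kappa,\lambda,s,\kappa',\lambda',s'$ zero, $\psi=\mathrm{id}$, $q'=0$, and $\varphi$ a nonzero constant map: the three conditions hold, yet $\varphi$ is not a homomorphism.
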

 \begin{corollary}\cite{And2}
     Let $\mathfrak{a} =\mathfrak{a}(\mathfrak{g}; \kappa,\lambda,s)$ and $\mathfrak{a'} = \mathfrak{a'}(\mathfrak{g'}; \kappa',\lambda',s')$. Then $\mathfrak{a}$ is isomorphic to $\mathfrak{a'}$ if and only if there exist a Lie algebra isomorphism $\Psi: \mathfrak{g}\to \mathfrak{g'}$ and an element $q\in \mathfrak{g}$ such that
        \begin{equation}
            \kappa' = \Psi \kappa \Psi^{-1},
        \end{equation}
        \begin{equation}
            \lambda' = \Psi (\lambda - ad_{q}) \Psi^{-1},
        \end{equation}
        \begin{equation}
            s' = \Psi(s+ q -\kappa(q)).
        \end{equation}
 \end{corollary}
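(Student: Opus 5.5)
The plan is to reduce the affine statement to the Lie algebra isomorphism criterion and the one-sided characterization obtained just above it. By definition, an isomorphism of Lie affgebras is a homomorphism of Lie affgebras that is bijective, and whose inverse is again a homomorphism. So I will apply the preceding theorem twice: once to $\varphi:\mathfrak{a}\to\mathfrak{a'}$ and once to $\varphi^{-1}:\mathfrak{a'}\to\mathfrak{a}$.

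For the forward direction, assume $\varphi$ is an isomorphism. The theorem gives a Lie algebra homomorphism $\psi:\mathfrak{g}\to\mathfrak{g'}$ and $q'\in\mathfrak{g'}$ with
\begin{equation*}
\psi\kappa=\kappa'\psi,\qquad \psi\lambda=(\mathrm{ad}_{q'}+\lambda')\psi,\qquad \psi(s)=s'-q'+\kappa'(q').
\end{equation*}
First I need to show that $\psi$ is bijective. For this I recall, from the proof of the cited theorem in \cite{And2}, that $\psi$ is the linear part of $\varphi$, namely $\psi(a)=\varphi(a)-\varphi(0)$ and $q'=\varphi(0)$ up to the sign convention. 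Since $\varphi$ is a bijective affine map, $\psi$ is a bijective linear map, hence a Lie algebra isomorphism $\Psi:=\psi$.

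Next I set $q:=-\Psi^{-1}(q')$, or whatever sign matches. The first relation gives $\kappa'=\Psi\kappa\Psi^{-1}$ directly. For the second, $\Psi$ is a Lie isomorphism, so $\Psi\,\mathrm{ad}_{q}\,\Psi^{-1}=\mathrm{ad}_{\Psi(q)}$. Then $\lambda'=\Psi\lambda\Psi^{-1}-\mathrm{ad}_{q'}=\Psi(\lambda-\mathrm{ad}_{\Psi^{-1}q'})\Psi^{-1}$, which fixes the correct choice $q=\Psi^{-1}(q')$. With this choice, the third relation reads $s'=\Psi(s)+q'-\kappa'(q')=\Psi(s)+\Psi(q)-\Psi\kappa(q)=\Psi(s+q-\kappa(q))$, using $\kappa'\Psi=\Psi\kappa$. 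The step to handle carefully is reconciling signs. The homomorphism theorem has $\psi(s)=s'-q'+\kappa'(q')$, whereas the corollary has $+q-\kappa(q)$. I expect these to match exactly when $q=\Psi^{-1}(q')$, and I will verify the sign of $\mathrm{ad}_{q'}$ in the same substitution.

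For the converse, given $\Psi$ and $q$, I define $q'=\Psi(q)$ and reverse the computation to obtain the three relations of the homomorphism theorem with $\psi=\Psi$. This shows that the affine map $\varphi(a)=\Psi(a)+c$, with the constant $c$ dictated by the theorem, is a homomorphism. It is clearly bijective. Its inverse has linear part $\Psi^{-1}$, and I will check the inverse relations using $q''=\Psi^{-1}$ applied to the appropriate element: conjugating the three identities by $\Psi^{-1}$ shows the inverse is also a homomorphism. The main obstacle is bookkeeping, namely pinning down from \cite{And2} exactly how $\varphi$, $\psi$ and $q'$ are related. This is what lets me conclude that bijectivity of $\varphi$ is equivalent to bijectivity of $\psi$.
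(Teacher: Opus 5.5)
Your proposal is correct. It follows the natural route: the paper gives no proof and only cites \cite{And2}, where this corollary is read off from the preceding homomorphism theorem. Writing $\varphi(a)=\psi(a)+q'$ with $q'=\varphi(0)$, bijectivity of $\varphi$ gives bijectivity of $\Psi:=\psi$, and $q:=\Psi^{-1}(q')$ converts the three relations into the stated ones with no sign adjustment. Your hedges can be dropped: the clause $\varphi=\psi+q'$, which is omitted from the theorem as quoted in the paper, is verified directly by expanding $\{\varphi(a),\varphi(b)\}$. Also, the inverse of a bijective Lie affgebra homomorphism is automatically a homomorphism, so the second application of the theorem in your converse is unnecessary.
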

 \begin{definition}
     An affine subspace $\mathfrak{b}$ of a Lie affgebra $\mathfrak{a}$ is called a \textbf{Lie subaffgebra} of $\mathfrak{a}$ if $a\in \mathfrak{b}$, $b\in \mathfrak{b}$ together imply $\{a, b\}\in \mathfrak{b}$.
 \end{definition}
\begin{proposition}\cite{And2}
    Let $\mathfrak{a}=\mathfrak{a}(\mathfrak{g}; \kappa, \lambda, s)$ be a Lie affgebra with affine Lie bracket $\{-, -\}$ and let $\mathfrak{b}$ be an affine subspace of $\mathfrak{a}$. Then $\mathfrak{b}$ is a Lie subaffgebra of $\mathfrak{a}$ if and only if there exists $a\in \mathfrak{b}$ and a Lie subalgebra $\mathfrak{h}$ of $\mathfrak{g}$, such that $\mathfrak{b}=a+\mathfrak{h}$ and 
        \begin{enumerate}
            \item $\kappa(a)+s-a \in \mathfrak{h}$;
            \item $\kappa(\mathfrak{h})\subseteq \mathfrak{h} $ (i.e. $\mathfrak{h}$ is an invariant subspace of $\mathfrak{g}$ with respect to the linear map $\kappa$);
            
            \item $(\lambda + ad_{a}) (\mathfrak{h})\subseteq \mathfrak{h}$ (i.e. $\mathfrak{h}$ is an invariant subspace of $\mathfrak{g}$ with respect to the linear map $(\lambda + ad_{a})$).
        \end{enumerate}
        Furthermore, $\mathfrak{b} \cong \mathfrak{a}(\mathfrak{h}; \kappa, \lambda + ad_{a}, \kappa(a)+s-a)$
        as Lie affgebras.
\end{proposition}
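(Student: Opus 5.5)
We prove the statement by checking the three conditions of a Lie subaffgebra in the model $\mathfrak{a}=\mathfrak{a}(\mathfrak{g};\kappa,\lambda,s)$. Here the heap operation is $a-b+c$ and the action is $(1-\alpha)a+\alpha b$, so affine subspaces are exactly translates $a+\mathfrak{h}$ of linear subspaces $\mathfrak{h}\subseteq\mathfrak{g}$. Thus, given an affine subspace $\mathfrak{b}$, fix any $a\in\mathfrak{b}$ and write $\mathfrak{b}=a+\mathfrak{h}$ with $\mathfrak{h}=\{b-a: b\in\mathfrak{b}\}$ a linear subspace. The task is then to show that $\{\mathfrak{b},\mathfrak{b}\}\subseteq\mathfrak{b}$ if and only if $\mathfrak{h}$ is a Lie subalgebra satisfying (1)--(3).

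The key computation expands the bracket of two elements $a+h$ and $a+k$ with $h,k\in\mathfrak{h}$:
\begin{align*}
\{a+h,a+k\}-a &= [a+h,a+k]+\kappa(a+h)+\lambda(k-h)+s-a\\
&= \bigl(\kappa(a)+s-a\bigr)+\kappa(h)+(\lambda+\mathrm{ad}_a)(k)-(\lambda+\mathrm{ad}_a)(h)+[h,k].
\end{align*}
Here we used $[a,a]=0$, $[a,k]=\mathrm{ad}_a(k)$ and $[h,a]=-\mathrm{ad}_a(h)$. So $\mathfrak{b}$ is closed under the bracket exactly when this expression lies in $\mathfrak{h}$ for all $h,k\in\mathfrak{h}$.

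For sufficiency, each of the five terms lies in $\mathfrak{h}$ by (1), (2), (3), (3) and the subalgebra property respectively, and $\mathfrak{h}$ is a subspace.

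For necessity, we specialise the variables, relying only on linearity of $\kappa$, $\lambda$, $\mathrm{ad}_a$ and bilinearity of the bracket.
\begin{itemize}
\item Taking $h=k=0$ gives (1).
\item Subtracting that case, taking $h=0$ gives $(\lambda+\mathrm{ad}_a)(k)\in\mathfrak{h}$, which is (3).
\item Taking $k=0$ then gives $\kappa(h)-(\lambda+\mathrm{ad}_a)(h)\in\mathfrak{h}$, so combined with (3) we get $\kappa(h)\in\mathfrak{h}$, which is (2).
\item Finally, subtracting all the linear terms from the general case leaves $[h,k]\in\mathfrak{h}$, so $\mathfrak{h}$ is a Lie subalgebra.
\end{itemize}
The existence statement holds with any $a\in\mathfrak{b}$.

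For the isomorphism, the map $\phi:\mathfrak{h}\to\mathfrak{b}$, $h\mapsto a+h$, is an affine bijection because translations preserve $x-y+z$ and $(1-\alpha)x+\alpha y$. The displayed formula becomes
$$\phi^{-1}\{\phi(h),\phi(k)\}=[h,k]+\kappa(h)+(\lambda+\mathrm{ad}_a)(k-h)+(\kappa(a)+s-a),$$
which is precisely the bracket of $\mathfrak{a}(\mathfrak{h};\kappa,\lambda+\mathrm{ad}_a,\kappa(a)+s-a)$. Here $\kappa$ and $\lambda+\mathrm{ad}_a$ are understood restricted to $\mathfrak{h}$, which is legitimate by (2) and (3).

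It remains to check that these restricted maps satisfy identity (\ref{eq}), so that the target is a genuine Lie affgebra via the construction theorem. That identity holds for $\kappa,\lambda$ by hypothesis. For $\mathrm{ad}_a$ with the same $\kappa$, the required identity $\mathrm{ad}_a[x,y]=[\mathrm{ad}_a x,y]+[x,\mathrm{ad}_a y]$ holds since $\mathrm{ad}_a$ is a derivation by the Jacobi identity, and the $\kappa$-term appears only once in the sum. Alternatively, this follows automatically because $\mathfrak{b}$ inherits the Lie affgebra axioms as a subset closed under the bracket. This bookkeeping step, together with the sign convention for $\mathrm{ad}_a$, is the only place requiring care; the rest is routine linear algebra.
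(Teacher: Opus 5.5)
Your proof is correct. The paper only cites this result from the original source and gives no proof, but your approach is the same technique the paper uses for its left and right ideaf criteria in Section 3: expand $\{a+h,a+k\}-a$, then specialise $h=k=0$, $h=0$, $k=0$, and read off $[h,k]\in\mathfrak{h}$ last. Your treatment of the isomorphism goes beyond what the paper verifies, and it is correct, including the check that $\lambda+\mathrm{ad}_a$ still satisfies the generalized derivation identity with the same $\kappa$ because $\mathrm{ad}_a$ is a derivation.
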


\section{LEFT-IDEAF, RIGHT-IDEAF, IDEAF IN LIE AFFGEBRA}

We introduce \emph{left ideaf}, \emph{right ideaf} and \emph{ideaf} for a \emph{Lie affgebra} with an affine Lie bracket $\{-, - \}$ which is affinization of the concept of ideal in Lie algebra with the Lie bracket $[-, -]$. We give the criterion when a coset of a subspace of a Lie algebra $\mathfrak{g}$ yields a left ideaf, right ideaf and ideaf of $\mathfrak{a}(\mathfrak{g}; \kappa, \lambda, s)$.

\begin{definition}
    An affine subspace $\mathfrak{b}$ of a Lie affgebra $\mathfrak{a}$ is called a \textbf{left ideaf} of $\mathfrak{a}$ if $a\in \mathfrak{a}$, $b\in \mathfrak{b}$ together imply $\{a, b\}\in \mathfrak{b}$.
\end{definition}
\begin{definition}
    An affine subspace $\mathfrak{b}$ of a Lie affgebra $\mathfrak{a}$ is called a \textbf{right ideaf} of $\mathfrak{a}$ if $a\in \mathfrak{a}$, $b\in \mathfrak{b}$ together imply $\{b, a\}\in \mathfrak{b}$.
\end{definition}
\begin{lemma}\label{ideaf}
    Let $\mathfrak{b}$ is an affine subspace of a Lie affgebra $(\mathfrak{a},\{-, -\})$. The following statements are equivalent:
    \begin{enumerate}
        \item If $a \in \mathfrak{a}$, $b \in \mathfrak{b}$ then $\{a, b\} \in \mathfrak{b}$ and $\{a, a\} \in \mathfrak{b}$.
        \item If $a \in \mathfrak{a}$, $b \in \mathfrak{b}$ then $\{a, b\} \in \mathfrak{b}$ and $\{b, a\} \in \mathfrak{b}$.
        \item If $a \in \mathfrak{a}$, $b \in \mathfrak{b}$ then $\{b, a\} \in \mathfrak{b}$ and $\{a, a\} \in \mathfrak{b}$.
    \end{enumerate}
\end{lemma}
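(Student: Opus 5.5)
The plan is to derive all three equivalences from the affine antisymmetry identity, in its equivalent form $\{a, b\}=<\{a, a\}, \{b, a\}, \{b, b\}>$ (equivalently $\{b,a\}=<\{b,b\},\{a,b\},\{a,a\}>$). We also use one basic fact: an affine subspace $\mathfrak{b}$ is a sub-heap, i.e.\ closed under $<-,-,->$. Throughout, $a\in\mathfrak{a}$ and $b\in\mathfrak{b}$ are arbitrary. A preliminary observation is that $\{b,b\}\in\mathfrak{b}$ in each of (1), (2), (3): take $a:=b\in\mathfrak{a}$ in whichever membership condition the statement provides.

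For (1)$\Rightarrow$(2), we have $\{a,b\}$, $\{a,a\}$ and $\{b,b\}$ in $\mathfrak{b}$. Applying antisymmetry with the roles of $a,b$ swapped gives $\{b,a\}=<\{b,b\},\{a,b\},\{a,a\}>$, which lies in $\mathfrak{b}$ by heap closure. For (2)$\Rightarrow$(1), we need $\{a,a\}\in\mathfrak{b}$. Rearranging the antisymmetry identity in the abelian heap gives $\{a,a\}=<\{a,b\},\{b,b\},\{b,a\}>$. To verify this, I would pass to the tangent vector space at any point, where the identity reads $\{a,b\}+\{b,a\}=\{a,a\}+\{b,b\}$. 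All three terms on the right-hand side of the rearranged identity lie in $\mathfrak{b}$, so $\{a,a\}\in\mathfrak{b}$.

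The same two identities handle (3). For (3)$\Rightarrow$(2), use $\{a,b\}=<\{a,a\},\{b,a\},\{b,b\}>\in\mathfrak{b}$. For (2)$\Rightarrow$(3), use the rearranged identity above once more to obtain $\{a,a\}\in\mathfrak{b}$.

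There is no real obstacle here. The only point needing care is the bookkeeping of heap rearrangements: solving $<x,y,z>=w$ for $x$ in an abelian heap. This is routine after identifying the heap with a vector space via a base point, or after using the Mal'cev and associativity identities directly.
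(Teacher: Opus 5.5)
Your proposal is correct and follows the same route as the paper. Both arguments use affine antisymmetry, rewritten as $\{b,a\}=\langle\{b,b\},\{a,b\},\{a,a\}\rangle$ and $\{a,a\}=\langle\{b,a\},\{b,b\},\{a,b\}\rangle$, together with $\{b,b\}\in\mathfrak{b}$ and closure of $\mathfrak{b}$ under the heap operation; you also write out the (2)$\Leftrightarrow$(3) cases that the paper dismisses with ``similarly''.
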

    \begin{proof}
        To prove (1) implies (2), we need only to show that $\{b, a\} \in \mathfrak{b}.$ By affine antisymmetry of Lie affgebra, we means that for all $a, b \in \mathfrak{a}$, $<\{a, b\}, \{a, a\}, \{b, a\}>~=\{b, b\}$. By Mal'cev identity, we write $\{a, b\}=~<\{a, b\}, \{b, b\}, \{b, b\}>$, and using affine antisymmetry, we obtain $\{a, b\}=<\{a, a\}, \{b, a\}, \{b, b\}>$, and hence $\{b, a\}=<\{b, b\}, \{a, b\}, \{a, a\}>.$ Since $b\in \mathfrak{b}$, then $\{b, b\}\in \mathfrak{b}$ using $(1)$ and also by $(1)$, $<\{b, b\}, \{a, b\}, \{a, a\}>$ $ \in \mathfrak{b}$, thus $\{b, a\} \in \mathfrak{b}$. \\
        For (2) implies (1), we need only to show that $\{a, a\} \in \mathfrak{b}.$ By affine antisymmetry of Lie affgebra, we obtain $<\{b, a\}, \{b, b\}, \{a, b\}>=\{a, a\}$. Since $b\in \mathfrak{b}$, then by (2), $\{b, b\}\in \mathfrak{b}$ and $<\{b, a\}, \{b, b\}, \{a, b\}>$ $\in \mathfrak{b}$, thus $\{a, a\} \in \mathfrak{b}.$\\
        Similarly, we can argue for the equivalence of (2) and (3) along with (3) and (1).
    \end{proof}
    \begin{definition}
         An affine subspace $\mathfrak{b}$ of a Lie affgebra $\mathfrak{a}$ with affine Lie bracket $\{-, -\}$ is called an \textbf{ideaf} of $\mathfrak{a}$ if it satisfies one of the equivalent conditions of Lemma (\ref{ideaf}).
    \end{definition}
    Given a vector space $V$ and its subspace $W$, any coset $v+W \subseteq V$ is an affine subspace of $V$ in an obvious way:
    $$<v+w, v+w', v+w''>:=v+(w-w'+w''),~~ \alpha \triangleright_{v+w}(v+w'):=v+(1-\alpha)w+ \alpha w',$$ for all $w, w', w'' \in W.$ The criterion when a coset of an (a subspace) ideal of a Lie algebra $\mathfrak{g}$ yields a left ideaf of $\mathfrak{a}(\mathfrak{g}; \kappa, \lambda, s)$ is given in the following
    \begin{prop}\label{1}
        Let $\mathfrak{a}=\mathfrak{a}(\mathfrak{g}; \kappa, \lambda, s)$ be a Lie affgebra with affine Lie bracket $\{-, -\}$ and let $\mathfrak{b}$ be an affine subspace of $\mathfrak{a}$. Then $\mathfrak{b}$ is a left ideaf of $\mathfrak{a}$ if and only if there exists $a\in \mathfrak{b}$ and a ideal $\mathfrak{h}$ of $\mathfrak{g}$, such that $\mathfrak{b}=a+\mathfrak{h}$ and 
        \begin{enumerate}
            \item $\lambda(a)+s-a \in \mathfrak{h}$;
            \item $(\kappa - \lambda - ad_{a})(\mathfrak{g})\subseteq \mathfrak{h};$
            \item $\lambda (\mathfrak{h})\subseteq \mathfrak{h}$ (i.e. $\mathfrak{h}$ is an invariant subspace of $\mathfrak{g}$ with respect to the linear map $\lambda$).
        \end{enumerate}
    \end{prop}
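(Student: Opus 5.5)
The plan is to reduce the left ideaf condition to a single membership statement in $\mathfrak{g}$ and then separate it into its constant, $\mathfrak{h}$-linear and $\mathfrak{g}$-linear parts. First I will use the standard fact that a non-empty affine subspace of the affine space underlying $\mathfrak{g}$ (with $<u,v,w>=u-v+w$ and $\alpha\triangleright_u v=(1-\alpha)u+\alpha v$) is a coset: for any fixed $a\in\mathfrak{b}$, the set $\mathfrak{h}:=\{b-a : b\in\mathfrak{b}\}$ is a linear subspace of $\mathfrak{g}$. Indeed, it is closed under $u-v+w$ and under $(1-\alpha)\cdot 0+\alpha u$, and it contains $0$. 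Hence $\mathfrak{b}=a+\mathfrak{h}$. Then $\mathfrak{b}$ is a left ideaf exactly when $\{x,a+h\}-a\in\mathfrak{h}$ for all $x\in\mathfrak{g}$ and $h\in\mathfrak{h}$.

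Next I will expand this using $\{x,y\}=[x,y]+\kappa(x)+\lambda(y-x)+s$. Writing $[x,a]=-\mathrm{ad}_a(x)$, I expect
\begin{equation*}
\{x,a+h\}-a=(\kappa-\lambda-\mathrm{ad}_a)(x)+[x,h]+\lambda(h)+\bigl(\lambda(a)+s-a\bigr).
\end{equation*}
The right-hand side consists of a constant term, a term linear in $x$, a term linear in $h$, and the bilinear term $[x,h]$.

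For the forward direction I will specialise the variables in turn. Taking $x=0$ and $h=0$ gives condition (1). Taking $x=0$ and using (1) gives $\lambda(h)\in\mathfrak{h}$, which is (3). Taking $h=0$ and using (1) gives $(\kappa-\lambda-\mathrm{ad}_a)(x)\in\mathfrak{h}$ for all $x$, which is (2). Finally, subtracting these three memberships from the general one leaves $[x,h]\in\mathfrak{h}$ for all $x\in\mathfrak{g}$ and $h\in\mathfrak{h}$, so $\mathfrak{h}$ is an ideal of $\mathfrak{g}$.

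For the converse, given an ideal $\mathfrak{h}$ and $a\in\mathfrak{b}$ satisfying (1)–(3), each of the four summands lies in $\mathfrak{h}$. Therefore $\{x,b\}\in a+\mathfrak{h}=\mathfrak{b}$ for every $x\in\mathfrak{a}$ and $b\in\mathfrak{b}$, so $\mathfrak{b}$ is a left ideaf.

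The only delicate points are the sign of $\mathrm{ad}_a$ in the expansion and the observation that the existence of some $a$ is equivalent to the statement for any $a\in\mathfrak{b}$. Otherwise the argument is a routine polarisation.
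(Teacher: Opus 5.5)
Your proposal is correct and follows essentially the same route as the paper. Both write $\mathfrak{b}=a+\mathfrak{h}$ and expand $\{x,a+y\}$ as $[x,y]+(\kappa-\lambda-ad_{a})(x)+\lambda(y)+\lambda(a)+s$. Both then set $x$ and/or $y$ to zero to extract (1)--(3), deduce $[x,y]\in\mathfrak{h}$ from the general case, and obtain the converse by inspection. The one addition is that you verify that a nonempty affine subspace is a coset of a linear subspace, which the paper simply asserts.
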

    \begin{proof}
        An affine subspace $\mathfrak{b}$ is of the form $a+\mathfrak{h}$, where $\mathfrak{h}$ is a vector subspace of $\mathfrak{g}$. Then $\mathfrak{b}=a+\mathfrak{h}$ is a left ideaf of $\mathfrak{a}$ if and only if, for all $x\in \mathfrak{a} $ (mainly, of $\mathfrak{g}$), $z \in \mathfrak{b}$ implies $\{x, z\} \in \mathfrak{b}$, since $z\in \mathfrak{b}$ implies $z= a+y,$ for some $y\in \mathfrak{h},$ i.e.,
\begin{align}
\{x, a+y\}&=[x, a+y]+\kappa(x)+\lambda(a+y-x)+s \in a+\mathfrak{h},\tag*{}\\
&=[x, a]+[x, y]+\kappa(x)+\lambda(a)+\lambda(y)-\lambda(x)+s \in a+\mathfrak{h},\tag*{}\\
&=[x, y]+(\kappa- \lambda-ad_{a})(x)+\lambda(a)+\lambda(y)+s \in a+\mathfrak{h}.\label{eq11}
\end{align}
By putting $x=y=0$ in (\ref{eq11}) and using the properties of the Lie bracket $[-, -]$ on $\mathfrak{g}$, we obtain the property $(1)$. In view of $(1)$, by putting $y=0$ in (\ref{eq11}), we get the property $(2)$. In view of the properties $(1)$ and $(2)$, we have the property $(3)$ by putting $x=0$ in (\ref{eq11}). Then together with all the three properties implies that $[x, y] \in \mathfrak{h}$, hence, $\mathfrak{h}$ is a ideal of $\mathfrak{g}$.\\
Conversely, if $\mathfrak{h}$ is ideal of $\mathfrak{g}$ and $(1)$-$(3)$ holds, $\{x, z\}=\{x, a+y\} \in a+ \mathfrak{h},$ so $\mathfrak{b}=a+\mathfrak{h}$ is a left ideaf of $\mathfrak{a}.$
    \end{proof}
    \begin{remark}
        In view of $(2)$, $\mathfrak{h}$ is also an invariant subspace of $\mathfrak{g}$ with respect to the linear map $(\kappa -\lambda-ad_{a}).$
    \end{remark}
    In a similar way, we give criterion when a coset of an (a subspace) ideal of a Lie algebra $\mathfrak{g}$ yields a right Ideal of $\mathfrak{a}(\mathfrak{g}; \kappa, \lambda, s)$ is given in the following
\begin{prop}\label{2}
     Let $\mathfrak{a}=\mathfrak{a}(\mathfrak{g}; \kappa, \lambda, s)$ be a Lie affgebra and let $\mathfrak{b}$ be an affine subspace of $\mathfrak{a}$. Then $\mathfrak{b}$ is a right ideaf of $\mathfrak{a}$ if and only if there exists $a\in \mathfrak{b}$ and a ideal $\mathfrak{h}$ of $\mathfrak{g}$, such that $\mathfrak{b}=a+\mathfrak{h}$ and 
        \begin{enumerate}
            \item $(\kappa - \lambda)(a)+s-a \in \mathfrak{h}$;
            \item $(\lambda + ad_{a})(\mathfrak{g})\subseteq \mathfrak{h};$
            \item $(\kappa -\lambda) (\mathfrak{h})\subseteq \mathfrak{h}$ (i.e. $\mathfrak{h}$ is an invariant subspace of $\mathfrak{g}$ with respect to the linear map $(\kappa-\lambda)$).
        \end{enumerate}
\end{prop}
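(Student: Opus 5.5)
I will mirror the proof of Proposition \ref{1}, now computing the bracket with the element of $\mathfrak{b}$ in the \emph{first} slot. Any affine subspace of $\mathfrak{a}$ has the form $\mathfrak{b}=a+\mathfrak{h}$ with $a\in\mathfrak{b}$ and $\mathfrak{h}$ a vector subspace of $\mathfrak{g}$. So $\mathfrak{b}$ is a right ideaf if and only if $\{a+y,x\}\in a+\mathfrak{h}$ for all $x\in\mathfrak{g}$ and $y\in\mathfrak{h}$.

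Using $\{u,v\}=[u,v]+\kappa(u)+\lambda(v-u)+s$, I will expand
\begin{align*}
\{a+y,x\}&=[a,x]+[y,x]+\kappa(a)+\kappa(y)+\lambda(x)-\lambda(a)-\lambda(y)+s\\
&=[y,x]+(\lambda+ad_{a})(x)+(\kappa-\lambda)(y)+(\kappa-\lambda)(a)+s .
\end{align*}
The condition is therefore that
\[
[y,x]+(\lambda+ad_{a})(x)+(\kappa-\lambda)(y)+(\kappa-\lambda)(a)+s-a\in\mathfrak{h}
\]
for all $x\in\mathfrak{g}$ and $y\in\mathfrak{h}$. Here $ad_a(x)=[a,x]$.

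I will then specialize step by step, using that $\mathfrak{h}$ is a subspace.
\begin{enumerate}
\item Setting $x=y=0$ gives condition (1): $(\kappa-\lambda)(a)+s-a\in\mathfrak{h}$.
\item Setting $y=0$ and subtracting the element from (1) gives condition (2): $(\lambda+ad_a)(x)\in\mathfrak{h}$ for all $x$.
\item Setting $x=0$ and using (1) gives condition (3): $(\kappa-\lambda)(y)\in\mathfrak{h}$.
\item Subtracting the three pieces from the full expression leaves $[y,x]\in\mathfrak{h}$ for all $y\in\mathfrak{h}$ and $x\in\mathfrak{g}$. By antisymmetry of the Lie bracket, $\mathfrak{h}$ is an ideal of $\mathfrak{g}$.
\end{enumerate}

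For the converse, suppose $\mathfrak{h}$ is an ideal and (1)--(3) hold. Each of the four summands of the displayed expression then lies in $\mathfrak{h}$, so $\{a+y,x\}\in a+\mathfrak{h}$, and $\mathfrak{b}$ is a right ideaf.

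The only real care needed is the bookkeeping of signs when regrouping $\kappa(a)-\lambda(a)$ and $-a$ into condition (1). One should also note that the argument does not depend on the choice of base point $a\in\mathfrak{b}$: the conditions are stated for the particular $a$ with $\mathfrak{b}=a+\mathfrak{h}$, and the existence of such an $a$ is all that is claimed. No step is a serious obstacle; the argument is a direct computation.
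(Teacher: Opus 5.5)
Your proposal is correct and follows essentially the same route as the paper's proof. Both expand $\{a+y,x\}=[y,x]+(\lambda+ad_a)(x)+(\kappa-\lambda)(y)+(\kappa-\lambda)(a)+s$, specialize to $x=y=0$, $y=0$ and $x=0$ to extract conditions (1)--(3), deduce $[y,x]\in\mathfrak{h}$ (hence that $\mathfrak{h}$ is an ideal), and prove the converse by noting that each summand lies in $\mathfrak{h}$.
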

\begin{proof}
    An affine subspace $\mathfrak{b}$ is of the form $a+\mathfrak{h}$, where $\mathfrak{h}$ is a vector subspace of $\mathfrak{g}$. Then $\mathfrak{b}=a+\mathfrak{h}$ is a right ideaf of $\mathfrak{a}$ if and only if, for all $x\in \mathfrak{a} $, $z \in \mathfrak{b}$, $\{z, x\} \in \mathfrak{b}$. Also, $z\in \mathfrak{b}$ implies $z= a+y,$ for some $y\in \mathfrak{h},$ that is, 
\begin{align} 
    \{a+y, x\}&=[a+y, x]+\kappa(a+y)+\lambda(x-a-y)+s \in a+\mathfrak{h},\nonumber \\ 
              &=[a, x]+[y, x]+\kappa(a)+\kappa(y)+\lambda(x)-\lambda(a)-\lambda(y)+s \in a+\mathfrak{h},\nonumber \\  
               &=[y, x]+(\lambda +ad_{a})(x)+(\kappa- \lambda)(y)+(\kappa-\lambda)(a)+s \in a+\mathfrak{h}.  \label{ideaf eqn}
\end{align}
By putting $x=y=0$ in Equation $\ref{ideaf eqn}$ and using the properties of the Lie bracket $[-, -]$ on $\mathfrak{g}$, we obtain the property $(1)$. In view of $(1)$, by putting $y=0$ in Equation $\ref{ideaf eqn}$, we get the property $(2)$. In view of the properties $(1)$ and $(2)$, we have the property $(3)$ by putting $x=0$ in Equation $\ref{ideaf eqn}$. Then together with all the three properties implies that $[y, x] \in \mathfrak{h}$, hence $\mathfrak{h}$ is a ideal of $\mathfrak{g}$.\\
Conversely, if $\mathfrak{h}$ is an ideal of $\mathfrak{g}$ and $(1)$-$(3)$ holds, $\{z, x\}=\{a+y, x\} \in a+ \mathfrak{h},$ so $\mathfrak{b}=a+\mathfrak{h}$ is a right ideal of $\mathfrak{a}.$
\end{proof}
\begin{remark}
     In view of condition $(2)$ in the above proposition, $\mathfrak{h}$ is also an invariant subspace of $\mathfrak{g}$ with respect to the linear map $(\lambda+ad_{a}).$
\end{remark}
\begin{corollary}
     Let $\mathfrak{a}=\mathfrak{a}(\mathfrak{g}; \kappa, \lambda, s)$ be a Lie affgebra with affine Lie bracket $\{-, -\}$ and let $\mathfrak{b}$ be an affine subspace of $\mathfrak{a}$. Then $\mathfrak{b}$ is an ideaf of $\mathfrak{a}$ if and only if there exists $a\in \mathfrak{b}$ and a ideal $\mathfrak{h}$ of $\mathfrak{a}$ (mainly, of $\mathfrak{g}$), such that $\mathfrak{b}=a+\mathfrak{h}$ and the conditions on $\mathfrak{h}$ in Proposition \ref{1} and Proposition \ref{2} holds.
\end{corollary}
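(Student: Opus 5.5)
The corollary follows directly from the definition of an ideaf, Lemma \ref{ideaf}, and Propositions \ref{1} and \ref{2}.

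By condition (2) of Lemma \ref{ideaf}, an affine subspace $\mathfrak{b}$ is an ideaf exactly when it is simultaneously a left ideaf and a right ideaf. So the plan is to apply Proposition \ref{1} and Proposition \ref{2} to the same $\mathfrak{b}$ and then check that both propositions can be made to produce the same pair $(a,\mathfrak{h})$.

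For the forward direction, suppose $\mathfrak{b}$ is an ideaf. Write $\mathfrak{b}=a+\mathfrak{h}$ with $a\in\mathfrak{b}$ and $\mathfrak{h}$ a vector subspace of $\mathfrak{g}$. The subspace $\mathfrak{h}$ is determined by $\mathfrak{b}$ alone, since it is the set of differences $\mathfrak{h}=\{b-b' : b,b'\in\mathfrak{b}\}$. The proofs of Propositions \ref{1} and \ref{2} work with an arbitrary representative $a\in\mathfrak{b}$, because they start from an arbitrary decomposition $\mathfrak{b}=a+\mathfrak{h}$. So we may fix one $a$ and use it in both propositions.

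Proposition \ref{1} then shows that $\mathfrak{h}$ is an ideal of $\mathfrak{g}$ and that conditions (1)--(3) of Proposition \ref{1} hold. Proposition \ref{2} gives conditions (1)--(3) of Proposition \ref{2} for the same $a$ and $\mathfrak{h}$.

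For the converse, suppose $\mathfrak{b}=a+\mathfrak{h}$ with $\mathfrak{h}$ an ideal of $\mathfrak{g}$ and both sets of conditions holding. The converse half of Proposition \ref{1} shows $\mathfrak{b}$ is a left ideaf, and the converse half of Proposition \ref{2} shows it is a right ideaf. Hence $\mathfrak{b}$ satisfies condition (2) of Lemma \ref{ideaf}, so it is an ideaf.

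The only point needing care is the independence of the representative $a$ in the forward direction. Each proposition is stated as ``there exists $a\in\mathfrak{b}$'', so a priori the two propositions could supply different points $a$ and $a'$. Two remarks settle this.

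\begin{itemize}
\item The subspace $\mathfrak{h}$ does not depend on the choice, since it is always the difference set of $\mathfrak{b}$.
\item The conditions themselves do not depend on the choice. If $a'=a+h$ with $h\in\mathfrak{h}$, then:
\begin{itemize}
\item $\mathrm{ad}_{a'}-\mathrm{ad}_a=\mathrm{ad}_h$ maps $\mathfrak{g}$ into $\mathfrak{h}$, because $\mathfrak{h}$ is an ideal;
\item $\lambda(a')-a'$ and $\lambda(a)-a$ differ by $\lambda(h)-h\in\mathfrak{h}$, by invariance of $\mathfrak{h}$ under $\lambda$;
\item similarly, $(\kappa-\lambda)(a')-a'$ and $(\kappa-\lambda)(a)-a$ differ by $(\kappa-\lambda)(h)-h\in\mathfrak{h}$, by invariance of $\mathfrak{h}$ under $\kappa-\lambda$.
\end{itemize}
\end{itemize}

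So the conditions hold for one representative if and only if they hold for every representative. Hence a single $a$ serves for both propositions, and the stated equivalence follows.
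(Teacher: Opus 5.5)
Your proposal is correct and follows the paper's own route. The paper states this corollary without a written proof, as an immediate consequence of the left- and right-ideaf propositions: by condition (2) of Lemma \ref{ideaf}, an ideaf is exactly a subspace that is both a left and a right ideaf. Your check that a single representative $a\in\mathfrak{b}$ serves for both sets of conditions fills in a detail the paper leaves unstated. It holds because each proposition's forward argument works for any $a\in\mathfrak{b}$, and the conditions are stable under $a\mapsto a+h$ with $h\in\mathfrak{h}$.
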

\section{CENTER AND QUOTIENT OF LIE AFFGEBRAS}

In this section, we introduce the notion of center and quotient of Lie affgebras. Firstly, we give the definition of center of Lie affgebras and abelian Lie affgebras. Then, we show that the under certain conditions the center is an ideaf of the given Lie affgebra.

\begin{definition}
    Let $\mathfrak{a}$ be a Lie affgebra with the affine Lie bracket $\{-, -\}$. Then the center of $\mathfrak{a}$  is defined as
    $$\mathfrak{Z(a)}=\{b \in \mathfrak{a}~|~ \{a, b\}=\{b,a\}, \forall a \in \mathfrak{a}\}.$$
\end{definition}
\begin{definition}
    A Lie affgebra $\mathfrak{a}$ is said to be abelian Lie affgebra if for all $a, b \in \mathfrak{a}$, $\{a, b\}=\{b,a\}.$
\end{definition}
\begin{proposition}
Let $\mathfrak{a}=\mathfrak{a}(\mathfrak{g};\kappa, \lambda,s)$ be a Lie affgebra and let $\mathfrak{Z(a)}$ be the center of $\mathfrak{a}$.
    Then $b\in \mathfrak{Z(a)}$ if and only if $b\in Z(\mathfrak{g}$) and whenever $\kappa=2\lambda$.
\end{proposition}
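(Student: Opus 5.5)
The plan is a direct computation from the explicit form of the bracket of $\mathfrak{a}(\mathfrak{g};\kappa,\lambda,s)$ in the structure theorem of \cite{And2}, namely $\{a,b\}=[a,b]+\kappa(a)+\lambda(b-a)+s$. I read the statement as follows: assuming $\kappa=2\lambda$, an element $b$ lies in $\mathfrak{Z(a)}$ if and only if $b$ lies in the usual center $Z(\mathfrak{g})$ of the underlying Lie algebra. Everything reduces to computing the ``commutator defect'' $\{a,b\}-\{b,a\}$ in the vector space $\mathfrak{g}$. Here I use that, for this model, the heap operation is $a-b+c$, so equality in $\mathfrak{a}$ is just equality in $\mathfrak{g}$.

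First, for arbitrary $a,b\in\mathfrak{g}$, expand both brackets and subtract. The constant $s$ cancels, and antisymmetry of $[-,-]$ gives $[a,b]-[b,a]=2[a,b]$. The linear terms combine, using linearity of $\kappa$ and $\lambda$, into
\begin{equation*}
\{a,b\}-\{b,a\}=2[a,b]+(\kappa-2\lambda)(a)-(\kappa-2\lambda)(b).
\end{equation*}
This identity holds for every Lie affgebra of the form $\mathfrak{a}(\mathfrak{g};\kappa,\lambda,s)$ and is the key intermediate formula. Next, impose $\kappa=2\lambda$, so that the linear correction vanishes and $\{a,b\}-\{b,a\}=2[a,b]$.

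Now read off both directions. If $b\in Z(\mathfrak{g})$, then $[a,b]=0$ for all $a$, hence $\{a,b\}=\{b,a\}$ for all $a\in\mathfrak{a}$, so $b\in\mathfrak{Z(a)}$. Conversely, if $b\in\mathfrak{Z(a)}$, then $2[a,b]=0$ for all $a\in\mathfrak{g}$. Since $\operatorname{char}\mathbb{K}\neq 2$ (the standing assumption of the paper), we get $[a,b]=0$ for all $a$, so $b\in Z(\mathfrak{g})$.

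There is no real obstacle here. The only points needing care are the sign bookkeeping in the $\lambda$-terms, where $\lambda(b-a)-\lambda(a-b)=2\lambda(b)-2\lambda(a)$, and remembering to invoke $\operatorname{char}\mathbb{K}\ne 2$ to cancel the factor $2$. I would also remark, using the displayed general identity, why the hypothesis $\kappa=2\lambda$ is needed. Without it, membership in $\mathfrak{Z(a)}$ requires $2[a,b]+(\kappa-2\lambda)(a-b)=0$ for all $a$. Taking $a=b$ this is trivially satisfied, but for other $a$ the term $(\kappa-2\lambda)(a)$ depends affinely on $a$ and generally cannot be absorbed, so the center need not coincide with $Z(\mathfrak{g})$.
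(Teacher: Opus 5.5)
Your proposal is correct and is essentially the paper's proof: expand both brackets, reduce $\{a,b\}=\{b,a\}$ to $[a,b]-[b,a]+(\kappa-2\lambda)(a-b)=0$, and remove the linear term using $\kappa=2\lambda$. The only difference is that you explicitly invoke $\operatorname{char}\mathbb{K}\neq 2$ to pass from $2[a,b]=0$ to $[a,b]=0$; the paper stops at $[a,b]=[b,a]$ and leaves that step implicit.
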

\begin{proof}
Let $b\in \mathfrak{Z(a)}$. Then, by definition of the center, we get $\{a, b\}=\{b,a\}$ for all $a\in \mathfrak{a}$. Now, for any $a, b\in \mathfrak{a}$,
    \begin{align*}
        &~~~~~ \{a,b\}=[a, b]+\kappa(a)+\lambda(b-a)+s,\\
       \text{and}  &~~~~~ \{b,a\}=[b, a]+\kappa(b)+\lambda(a-b)+s.
    \end{align*}
Now, we have
\begin{align*}
&~~~~~~~~~~ \{a, b\}=\{b,a\}\\
&\iff [a, b]+\kappa(a)+\lambda(b-a)+s=[b, a]+\kappa(b)+\lambda(a-b)+s\\
&\iff [a, b]-[b,a]+\kappa(a-b)+2\lambda(b-a)=0, \forall a\in \mathfrak{a},\\
&\iff [a, b]-[b,a]+(\kappa-2\lambda)(a-b)=0, \forall a\in \mathfrak{a},\\
&\iff [a, b]=[b, a], \forall a\in \mathfrak{a},\text{using $\kappa=2\lambda.$}
\end{align*}
\end{proof}

\begin{theorem}
Assume that Char$(\mathbb{K})\neq 2$. Then the center $\mathfrak{Z(a)}$ is an ideaf of the Lie affgebra $\mathfrak{a}(\mathfrak{g};\kappa, \lambda,s)$, whenever $\kappa=2\lambda$ and $[x, \lambda(y)]=0,~ \text{for all}~ x, y\in \mathfrak{a},$ and $\text{ad}_s=0$.
\end{theorem}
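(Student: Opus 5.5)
Under the hypothesis $\kappa=2\lambda$, the preceding proposition identifies the center as $\mathfrak{Z(a)}=Z(\mathfrak{g})$, the center of the Lie algebra $\mathfrak{g}$. So we have to show that $Z(\mathfrak{g})$ is an ideaf of $\mathfrak{a}=\mathfrak{a}(\mathfrak{g};2\lambda,\lambda,s)$. Note first that $Z(\mathfrak{g})$ is a linear subspace, hence an affine subspace $0+Z(\mathfrak{g})$. The cleanest route avoids the coset criteria of Propositions \ref{1} and \ref{2}. Instead we verify directly condition (2) of Lemma \ref{ideaf}: for all $x\in\mathfrak{a}$ and $z\in Z(\mathfrak{g})$, both $\{x,z\}$ and $\{z,x\}$ lie in $Z(\mathfrak{g})$. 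Since $z$ is central, $\{x,z\}=\{z,x\}$, so it suffices to treat $\{x,z\}$.

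Expanding with $\kappa=2\lambda$ gives
\[
\{x,z\}=[x,z]+2\lambda(x)+\lambda(z-x)+s=\lambda(x)+\lambda(z)+s .
\]
It remains to check that each of the three terms is central in $\mathfrak{g}$. The hypothesis $[y,\lambda(w)]=0$ for all $y,w$ says exactly that $\lambda(\mathfrak{g})\subseteq Z(\mathfrak{g})$, so $\lambda(x)$ and $\lambda(z)$ are central. The hypothesis $\mathrm{ad}_s=0$ says $s\in Z(\mathfrak{g})$. Because $Z(\mathfrak{g})$ is a subspace, the sum $\lambda(x)+\lambda(z)+s$ is central, and hence $\{x,z\}=\{z,x\}\in\mathfrak{Z(a)}$.

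As a consistency check, and to place the result in the paper's framework, I would also observe that the compatibility condition \eqref{eq} is automatic here. With $\kappa=2\lambda$ and $\lambda(\mathfrak{g})$ central, both sides of \eqref{eq} force $\lambda([a,b])=0$, so the hypotheses are not vacuous only when $\lambda$ kills $[\mathfrak{g},\mathfrak{g}]$. This is not needed for the proof, since the bracket is given.

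The only delicate point is the first step: invoking the preceding proposition to identify $\mathfrak{Z(a)}$ with $Z(\mathfrak{g})$. Its argument reduces $\{a,b\}=\{b,a\}$ to $[a,b]-[b,a]=0$, that is $2[a,b]=0$. Passing to $[a,b]=0$ is where $\mathrm{Char}(\mathbb{K})\neq 2$ enters, and I would make that division by $2$ explicit. After that, the proof is the short computation above.
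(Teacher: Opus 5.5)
Your proof is correct. Its core coincides with the paper's: both come down to $\{c,b\}=\lambda(c)+\lambda(b)+s$ for central $b$, and then use $[x,\lambda(y)]=0$ and $\text{ad}_s=0$. The difference is in how the argument is organized.

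The paper never states $\mathfrak{Z(a)}=Z(\mathfrak{g})$. It first shows that $\mathfrak{Z(a)}$ is closed under the heap operation and the affine action, using only that $\{a,-\}$ and $\{-,a\}$ are affine maps. That argument holds in every Lie affgebra, with no assumptions on $\kappa,\lambda,s$. It then checks $\{a,\{c,b\}\}=\{\{c,b\},a\}$ by expanding both sides, arriving at $2\bigl([a,\lambda(c)]+[a,\lambda(b)]+[a,s]\bigr)=0$. That is just the center criterion $\{a,y\}-\{y,a\}=2[a,y]$ re-derived for $y=\{c,b\}$.

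You instead put the identification $\mathfrak{Z(a)}=Z(\mathfrak{g})$ first. The affine-subspace property then comes for free, since a linear subspace is the coset $0+Z(\mathfrak{g})$. The ideaf condition reduces to the observation $\lambda(\mathfrak{g})+s\subseteq Z(\mathfrak{g})$.

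Your version is shorter and shows exactly where each hypothesis is used. The conditions $\kappa=2\lambda$ and $\mathrm{Char}(\mathbb{K})\neq 2$ serve only the identification; the other two serve only the centrality of the summands. It also makes explicit the division by $2$ that the paper leaves implicit. This happens twice: in the preceding proposition, and in the paper's own step $\{c,b\}=\lambda(c)+\lambda(b)+s$, which silently needs $[c,b]=0$.

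What the paper's route buys in exchange is a more general fact: the center of any Lie affgebra is closed under the affine operations.
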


\begin{proof}
First we show that $\mathfrak{Z(a)}$ is a sub-heap of $\mathfrak{a}.$ Let $b_{1}, b_{2}, b_{3}\in \mathfrak{Z(a)}$, we need to show that $<b_{1}, b_{2}, b_{3}>~\in \mathfrak{Z(a)}$. Let $a \in \mathfrak{a}$ be arbitrary. Since $\{a, -\}$ is an affine map, then $$\{a, <b_{1}, b_{2}, b_{3}>\}=~<\{a, b_{1}\}, \{a, b_{2}\}, \{a, b_{3}\}>~=~<\{b_{1}, a\}, \{b_{2}, a\}, \{b_{3}, a\}>~=~\{<b_{1}, b_{2}, b_{3}>, a\}.$$ 
Since $b_{1}, b_{2}, b_{3}\in \mathfrak{Z(a)}$, it follows that $<b_{1}, b_{2}, b_{3}> \in \mathfrak{Z(a)}$. Hence, $\mathfrak{Z(a)}$ is a Sub-heap of $\mathfrak{a}.$

 Next, we show that $\mathfrak{Z(a)}$ is an affine subspace of $\mathfrak{a}.$ Let $\alpha \in \mathbb{K}$ and $b_{1}, b_{2} \in \mathfrak{Z(a)}$, we need to show that $\alpha \triangleleft_{b_{1}}b_{2} \in \mathfrak{Z(a)}$. Let $a\in \mathfrak{a}$ be arbitrary. Since $\{a, -\}$ is an affine map, then $\{a, \alpha \triangleleft_{b_{1}}b_{2}\}=\alpha \triangleleft_{\{a, b_{1}\}} \{a, b_{2}\}$.
    Since $b_{1}, b_{2}\in \mathfrak{Z(a)}$, therefore, $\{a, \alpha \triangleleft_{b_{1}}b_{2}\}=\alpha \triangleleft_{\{a, b_{1}\}} \{a, b_{2}\}$=$\alpha\triangleleft_{\{b_{1}, a\}}\{b_2, a\}=\{\alpha\triangleleft_{b_1}b_2,a\}.$\\
    Hence, $\mathfrak{Z(a)}$ is an affine subspace of $\mathfrak{a}.$

Finally, we show $\mathfrak{Z(a)}$ is an ideaf of $\mathfrak{a}$. Let $b\in \mathfrak{Z(a)}$, $c\in \mathfrak{a}$. Then it is sufficient to show that $\{c,b\}\in \mathfrak{Z(a)}$. Let $a\in \mathfrak{a}$ be arbitary element. We need to show that $\{a,\{c,b\}\}=\{\{c, b\}, a\}$. Since $b\in \mathfrak{Z(a)}$, then $\{a, b\}=\{b,a\}$, by definition. Then for all $a, c \in \mathfrak{a}$ and $b\in \mathfrak{Z(a)}$,
\begin{align*}
    &~~~~~~~~~~~\{a,\{c,b\}\}=\{\{c, b\}, a\}\\
    &\iff \{a, \lambda(c)+\lambda(b)+s\}=\{\lambda(c)+\lambda(b)+s, a\}\\
    &\iff [a, \lambda(c)+\lambda(b)+s]+\lambda(a)+\lambda(\lambda(c)+\lambda(b)+s)+s\\
    &=[\lambda(c)+\lambda(b)+s, a]+2\lambda(\lambda(c)+\lambda(b)+s)+\lambda(a-\lambda(c)-\lambda(b)-s)+s\\
    &\iff [a, \lambda(c)]+[a, \lambda(b)]+[a,s]+\lambda(a)+\lambda^2(c)+\lambda^2(b)+\lambda(s)+s\\
    &=[\lambda(c),a]+[\lambda(b),a]+[s,a]+2\lambda^2(c)+2\lambda^2(b)+2\lambda(s)+\lambda(a)-\lambda^2(c)
-\lambda^2(b)-\lambda(s)+s\\
   &\iff 2[a, \lambda(c)]+2[a, \lambda(b)]+2[a, s]=0\\
   &\iff [a, \lambda(c)]+[a, \lambda(b)]+[a, s]=0.
\end{align*}

\end{proof}

\begin{proposition}
   Let $\mathfrak{a}$ be a abelian Lie affgebra with $b\in \mathfrak{a}$ such that $\{a, b\}=\{a, a\}$ for all $a\in \mathfrak{a}$. Then $b\in \mathfrak{a}$ if and only if $\lambda=\text{ad}_b$, for some $b\in \mathfrak{a}$ and $\kappa=0.$ Moreover, for any other $b'(\neq b)\in \mathfrak{a}$, $b-b'\in Z(\mathfrak{g}), \text{center of the underlying Lie algebra}.$
\end{proposition}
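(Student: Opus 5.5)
We read the statement in coordinates: $\mathfrak{a}=\mathfrak{a}(\mathfrak{g};\kappa,\lambda,s)$ is abelian, and we look for the conditions under which some $b\in\mathfrak{a}$ satisfies $\{a,b\}=\{a,a\}$ for all $a$. Everything will follow from the explicit bracket
$$\{a,b\}=[a,b]+\kappa(a)+\lambda(b-a)+s .$$

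\textbf{Reducing the hypothesis.} Since $\{a,a\}=\kappa(a)+s$, the condition $\{a,b\}=\{a,a\}$ becomes
$$[a,b]+\lambda(b)-\lambda(a)=0 \qquad \text{for all } a\in\mathfrak{g}.$$
We separate this into its constant and linear parts. Putting $a=0$ gives $\lambda(b)=0$. The remaining identity is then $\lambda(a)=[a,b]=-\mathrm{ad}_b(a)$ for all $a$, so $\lambda$ is (up to the sign convention for $\mathrm{ad}$) the inner derivation $\mathrm{ad}_b$. The consistency requirement $\lambda(b)=-[b,b]=0$ holds automatically. Conversely, if $\lambda=\pm\mathrm{ad}_b$ with the matching sign, the equation is satisfied, which gives the ``if'' direction. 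Reading the statement, ``$b\in\mathfrak{a}$ if and only if'' should mean that such a $b$ exists exactly under these conditions.

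\textbf{Deriving $\kappa=0$.} Here we use that $\mathfrak{a}$ is abelian. The computation in the preceding proposition shows that $\{a,b\}=\{b,a\}$ for all $a,b$ is equivalent to
$$2[a,b]+(\kappa-2\lambda)(a-b)=0 .$$
Setting $b=0$ gives $\kappa=2\lambda$, and then $[a,b]=0$ for all $a,b$ because $\mathrm{char}\,\mathbb{K}\neq 2$. So $\mathfrak{g}$ is abelian, hence $\mathrm{ad}_b=0$, so $\lambda=0$ and $\kappa=2\lambda=0$.

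In the other direction, $\kappa=0$ and $\lambda=\mathrm{ad}_b$ must be checked against the defining identity (\ref{eq}) $\lambda([a,b])=[\lambda(a),b]+[a,\lambda(b)]-[a,\kappa(b)]$. With $\kappa=0$ this says precisely that $\mathrm{ad}_b$ is a derivation, which is the Jacobi identity. So these data define a genuine Lie affgebra.

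\textbf{The ``moreover'' part.} Suppose $b'$ is another element with $\{a,b'\}=\{a,a\}$ for all $a$. Then $\lambda=\mathrm{ad}_b=\mathrm{ad}_{b'}$, so $\mathrm{ad}_{b-b'}=0$, i.e.\ $b-b'\in Z(\mathfrak{g})$. This step is routine.

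\textbf{Main obstacle.} The difficulty is not computational but interpretive. The statement is loosely phrased: it has a quantifier clash between ``for some $b$'' and the fixed $b$, and the abelian hypothesis forces $\lambda=0$, which makes $\kappa=0$ automatic. We will therefore state the precise form being proved, namely the equivalence of ``$\{a,b\}=\{a,a\}$ for all $a$'' with ``$\lambda=\mathrm{ad}_b$ (sign fixed by convention) and $\kappa=0$'' in the given setting, together with the uniqueness of $b$ modulo $Z(\mathfrak{g})$.
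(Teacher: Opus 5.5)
Your proof is correct. The $\lambda$-part (set $a=0$ to get $\lambda(b)=0$, then $\lambda(a)=[a,b]$) and the ``moreover'' step ($\mathrm{ad}_{b-b'}=0$) coincide with the paper's. Your sign caveat is also justified: in this proof the paper tacitly uses $\mathrm{ad}_b(x)=[x,b]$, as its converse computation $\mathrm{ad}_b(b-a)=[b-a,b]$ shows.

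The genuine difference is how you obtain $\kappa=0$. The paper reads the hypothesis as $b\in\mathfrak{Z(a)}$, i.e.\ $\{a,b\}=\{b,a\}=\{a,a\}$ for all $a$. From this it derives $[a,b]=\lambda(a-b)$ and $[b,a]=(\kappa-\lambda)(a-b)$, and anti-commutativity then gives $\kappa(a-b)=0$ for all $a$, hence $\kappa=0$. That argument only needs the single element $b$ to be central and divides by nothing. It yields a characterization in which $\lambda=\mathrm{ad}_b$ may well be nonzero.

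You instead invoke the global abelian condition to get $\kappa=2\lambda$ and $[\mathfrak{g},\mathfrak{g}]=0$ (using $\mathrm{char}\,\mathbb{K}\neq 2$), so $\lambda=\pm\mathrm{ad}_b=0$ and $\kappa=0$. This uses the hypothesis exactly as stated. It also shows that under that hypothesis the proposition degenerates: $\mathfrak{g}$ is abelian, $\lambda=\kappa=0$, and $Z(\mathfrak{g})=\mathfrak{g}$, so the ``moreover'' clause is automatic. The paper's route captures the more informative statement about a central $b$ in a possibly non-abelian affgebra.

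Your Jacobi check that $(0,\mathrm{ad}_b)$ satisfies the compatibility identity is a harmless extra. Conversely, the paper's ``if'' direction also verifies $\{b,a\}=\{a,a\}$, which in your reading comes for free from abelianness.
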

\begin{proof}
    Let $b\in \mathfrak{a} = \mathfrak{Z(a)}$, the center of the Lie affgebra $\mathfrak{a}$. Then, we have $\{a, b\}=\{b,a\}=\{a, a\}$, for all $a\in \mathfrak{a}$. Now, for any $a, b\in \mathfrak{a}$,
    \begin{align*}
        &\{a,b\}=[a, b]+\kappa(a)+\lambda(b-a)+s,\\
         &\{b,a\}=[b, a]+\kappa(b)+\lambda(a-b)+s,\\
         &\{a,a\}=\kappa(a)+s.
    \end{align*}
    
From the first two equality, we obtain the following,

\begin{align}
&\{a, b\}=\{b,a\},\nonumber\\
&[a, b]+\kappa(a)+\lambda(b-a)+s=[b, a]+\kappa(b)+\lambda(a-b)+s\nonumber\\
&[a, b]-[b,a]+\kappa(a-b)+2\lambda(b-a)=0.
\end{align}
From the first and third equality, we obtain the following,
\begin{align}
&[a, b]+\kappa(a)+\lambda(b-a)+s=\kappa(a)+s,\nonumber\\
&[a, b]+\lambda(b-a)=0,\nonumber\\ \label{equ13}
&[a, b]=\lambda(a-b).
\end{align}

From the second and third equality, we obtain the following,
\begin{align}
&[b, a]+\kappa(b)+\lambda(a-b)+s=\kappa(a)+s,\nonumber\\
&[b,a]+(\kappa-\lambda)(b-a)=0,\nonumber \\\label{equ14}
&[b,a]=(\kappa-\lambda)(a-b).
\end{align}
We know that in the Lie algebra $\mathfrak{g}$, by anti-commutativity, $[a, b]=-[b,a]$ for all $a, b\in \mathfrak{g}.$ Thus, using Equation $\ref{equ13}$ and $\ref{equ14}$, we get 
\begin{align}
&\lambda(a-b)=-(\kappa-\lambda)(a-b),\nonumber\\
&\kappa(a-b)=0, ~\text{for all}~ a,b \in \mathfrak{g},\nonumber\\ \label{equ15}
&\kappa(a)=\kappa(b),~\text{for all}~ a,b \in \mathfrak{g}.
\end{align}
Choosing $a=o\in \mathfrak{g},$ which is the additive identity of the Lie algebra $\mathfrak{g}$, we have $ k(b)=o$. Therefore, using Equation \ref{equ15}, we have $k(a)=0,$ for all $a\in \mathfrak{g}$. Thus, $\kappa=0.$
Using Equation $\ref{equ13}$, for all $a$, $[a, b]=\lambda(a-b)$. Again, we choose $a=o\in \mathfrak{g}$. Then, $[o, b]=o=-\lambda(b)$ and hence, $[a, b]=\lambda(a)$. Similarly, Using equation $\ref{equ14}$, for all $a$, $[b, a]=\lambda(b-a)$. Again, we choose $a=o\in \mathfrak{g}.$ Then, $[b, o]=o=\lambda(b)$ and hence $[b, a]=-\lambda(a)$. Therefore, $\text{ad}_b=\lambda.$\\
Conversely, let us assume that $\kappa=0$ and  $\lambda=\text{ad}_b$, for some $b\in \mathfrak{a}$.Then  $\{a, b\}=[a,b]+\kappa(a)+\lambda(b-a)+s=[a,b]+\text{ad}_b(b-a)+s=[a,b]+[b-a, b]+s=[a, b]+[b, b]-[a, b]+s=s$, $\{b, a\}=[b,a]+\kappa(b)+\lambda(a-b)+s=[b,a]+\lambda(a-b)+s=[b,a]+\text{ad}_b(a-b)+s=[b,a]+[a-b, b]+s=[b,a]+[a, b]-[b,b]+s=s$ and $\{a, a\}=s,$ for all $a\in \mathfrak{a}.$ Then for all $a\in \mathfrak{a}$, $\{a, b\}=\{b,a\}=\{a, a\}.$\\
Let $b'(\neq b) \in \mathfrak{Z(a)}$, then by above observation, $\text{ad}_b(a)=\text{ad}_{b'}(a)=\lambda(a)$. It follows that $[a, b]=[a, b']$ and then $[a, b-b']=0, \forall a \in\mathfrak{a}$. Thus, $b-b'\in Z(\mathfrak{g}).$
    
\end{proof}

\subsection{QUOTIENT LIE AFFGEBRAS}
We start with the quotient heap as in \cite{Brz22} and quotient affine space as in \cite{Brz22}. Then we define quotient Lie affgebras in an obvious way.
Let $\mathfrak{a}$ be a Lie affgebra and $\mathfrak{b}$ be an ideaf of $\mathfrak{a}$. We start by assigning a relation to a sub-heap $\mathfrak{b}$ of the Lie affgebras $\mathfrak{a}$.
\begin{definition}\cite{Brz22}
    Given a sub-heap $\mathfrak{b}$ of $\mathfrak{a}$, we define a sub-heap relation $\sim_{\mathfrak{b}}$ as follows: $a_1\sim_{\mathfrak{b}} a_2 \iff$ there exists $b\in \mathfrak{b}$ such that $<a_1, a_2, b>$ $ \in \mathfrak{b}$.
\end{definition}
\begin{proposition}\cite{Brz22}
    Let $\mathfrak{b}$ be a sub-heap of $\mathfrak{a}$.
    \begin{enumerate}
        \item The relation $\sim_{\mathfrak{b}}$ is an equivalence relation.
        \item For all $b\in \mathfrak{b}$, class of $b$ is equal to $\mathfrak{b}$.
        \item For all $a_1, a_2 \in \mathfrak{a}$, $a_1 \sim_{\mathfrak{b}} a_2 $if and only if for all $b\in \mathfrak{b}$, $<a_1, a_2, b>$ $\in \mathfrak{b}.$
        \item If $\mathfrak{b}$ is a normal sub-heap of $\mathfrak{a}$, then the set of equivalence classes $\mathfrak{a}/\mathfrak{b}$ is a heap with the inherited operation: $<\overline{a_1}, \overline{a_2},\overline{a_3} >=\overline{<a_1, a_2, a_3>}$, where $\overline{a}\in \mathfrak{a}/\mathfrak{b}$ is the class of $a \in \mathfrak{a}.$ 
    \end{enumerate}
\end{proposition}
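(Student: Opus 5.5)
The plan is to work directly from the heap axioms (associativity and the Mal'cev identities) and the closure of $\mathfrak{b}$ under $<-,-,->$. I prove item (3) first, because it removes the existential quantifier in the definition of $\sim_{\mathfrak{b}}$ and makes the other items short. For the quotient in (4), I pass to the group retract of the heap at a point of $\mathfrak{b}$.

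\emph{Step 1 (item (3)).} Suppose $<a_1,a_2,b>\in\mathfrak{b}$ for some $b\in\mathfrak{b}$, and let $b'\in\mathfrak{b}$ be arbitrary. By associativity and the Mal'cev identity,
$$<<a_1,a_2,b>,b,b'> = <a_1,a_2,<b,b,b'>> = <a_1,a_2,b'>.$$
The left-hand side is a heap combination of elements of $\mathfrak{b}$, so $<a_1,a_2,b'>\in\mathfrak{b}$. The converse holds because $\mathfrak{b}$ is non-empty.

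\emph{Step 2 (item (1)).} Reflexivity holds because $<a,a,b>=b\in\mathfrak{b}$. For symmetry, put $c=<a_1,a_2,b>\in\mathfrak{b}$. Then
$$<a_2,a_1,c>=<<a_2,a_1,a_1>,a_2,b>=<a_2,a_2,b>=b\in\mathfrak{b},$$
so $a_2\sim_{\mathfrak{b}}a_1$ with witness $c$. For transitivity, let $<a_1,a_2,b>\in\mathfrak{b}$ and $x=<a_2,a_3,b'>\in\mathfrak{b}$. Then
$$<<a_1,a_2,b>,b,x>=<a_1,a_2,x>=<<a_1,a_2,a_2>,a_3,b'>=<a_1,a_3,b'>,$$
and this element lies in $\mathfrak{b}$.

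\emph{Step 3 (item (2)).} Fix $b_0\in\mathfrak{b}$. By (3) with $b=b_0$, we have $a\sim_{\mathfrak{b}}b_0$ if and only if $<a,b_0,b_0>=a\in\mathfrak{b}$. Hence the class of $b_0$ is exactly $\mathfrak{b}$.

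\emph{Step 4 (item (4)).} This is the step I expect to be the main obstacle, namely showing that the operation is well defined. I will fix $e\in\mathfrak{b}$ and use the retract group $G=(\mathfrak{a},\,x\cdot y=<x,e,y>)$, which has identity $e$ and inverse $x^{-1}=<e,x,e>$. In $G$ one has $<x,y,z>=xy^{-1}z$, and $\mathfrak{b}$ becomes a subgroup. Normality of the sub-heap translates into $\mathfrak{b}$ being a normal subgroup; in our setting the heap is abelian, so every subgroup is normal. By (3) with $b=e$, $a_1\sim_{\mathfrak{b}}a_2$ if and only if $a_1a_2^{-1}\in\mathfrak{b}$, so $\sim_{\mathfrak{b}}$ is the coset relation of $\mathfrak{b}$.

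Well-definedness of $\overline{<a_1,a_2,a_3>}$ then reduces to the standard fact that $xy^{-1}z$ is compatible with congruence modulo a normal subgroup. Concretely, if $a_i\sim_{\mathfrak{b}}a_i'$, then $(a_1a_2^{-1}a_3)(a_1'a_2'^{-1}a_3')^{-1}$ lies in $\mathfrak{b}$, which one checks by inserting $a_i'^{-1}a_i'$ and conjugating. Associativity and the Mal'cev identities on $\mathfrak{a}/\mathfrak{b}$ are inherited from $\mathfrak{a}$, since the operation is computed on representatives.
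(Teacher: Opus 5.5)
The paper gives no proof of this proposition. It is quoted from \cite{Brz22} and used as a black box, so there is no in-paper argument to compare against. Your proof is correct and follows the standard line. The identity $<<a_1,a_2,b>,b,b'>=<a_1,a_2,b'>$ in Step 1 is exactly what makes the choice of witness in $\sim_{\mathfrak{b}}$ irrelevant. Items (1) and (2) then follow from the heap manipulations you give. For (4), retracting to a point $e\in\mathfrak{b}$ identifies $\sim_{\mathfrak{b}}$ with the coset relation $a_1a_2^{-1}\in\mathfrak{b}$, and $\mathfrak{a}/\mathfrak{b}$ with the heap of the quotient group.

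Two small points should be made explicit.

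First, the non-emptiness of $\mathfrak{b}$ that you invoke is a genuine hypothesis. For $\mathfrak{b}=\emptyset$ the relation is empty, so reflexivity in (1) fails, and (3) fails because its right-hand side holds vacuously. The statement omits this, but it is automatic in the paper's applications, since affine subspaces are non-empty.

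Second, the paper never defines a normal sub-heap. If normality is formulated relative to a chosen point of $\mathfrak{b}$, take your $e$ to be that point, or note that the notion does not depend on the choice. This costs nothing, because $\sim_{\mathfrak{b}}$ is defined without reference to $e$. In the abelian setting of the paper the issue disappears, as you already observe.
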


\begin{remark}
    For $a_1\in \mathfrak{a}$, $\overline{a_1}=\{a_2\in \mathfrak{a}~| <a_1, a_2, b>~\in \mathfrak{b},  \forall b\in \mathfrak{b} \}$ and then we write $<a_1, a_2, b>~=b'$ for some $b' \in \mathfrak{b}$. Then $a_2=<a_1, <b, a_2, a_1>, b>=<a_1,<a_1, a_2,b>, b>~=<a_1, b', b>,$ by using Lemma$(2.3)(3)$ in \cite{Brz22}. Thus, $\overline{a_1}=\{ <a_1, b', b>| \forall b\in \mathfrak{a}, \exists b'\in \mathfrak{a}\}$.
\end{remark}
The set of equivalence classes $\mathfrak{a}/\mathfrak{b}$ is an affine space with 
$<\overline{a_1}, \overline{a_2},\overline{a_3} >=\overline{<a_1, a_2, a_3>} $ and $\alpha\triangleleft_{\overline{a_1}}\overline{a_2}=\overline{\alpha\triangleleft_{a_1}a_2}$. The ternary action is well defined, see [\cite{Brz1}, Construction $4.5$].\\

The set of equivalence classes $\mathfrak{a}/\mathfrak{b}$ is a Lie affgebra with ternary operation $<\overline{a_1}, \overline{a_2},\overline{a_3} >:=\overline{<a_1, a_2, a_3>} $ and ternary action $\alpha\triangleleft_{\overline{a_1}}\overline{a_2}:=\overline{\alpha\triangleleft_{a_1}a_2}$, and with affine Lie bracket $\{\overline{a_1}, \overline{a_2}\}:=\overline{\{a_1, a_2\}}$, where $\mathfrak{b}$ is an ideaf of Lie affgebra $\mathfrak{a}.$

Now, we show that the affine Lie bracket is well-defined. Observe\\
 $ \{<a_1, b_1, b_2>, <a_2, b_3, b_4>\}\\
=~<<\{a_1, a_2\},\{a_1, b_3\},\{a_1, b_4\}>, \underbrace{<\{b_1, a_2\}, \{b_1, b_3\}, \{b_1, b_4\}>}_{=b_5\in \mathfrak{b}},\underbrace{<\{b_2, a_2\}, \{b_2, b_3\}, \{b_2, b_4\}>}_{=b_6\in \mathfrak{b}}>$ (Since $\mathfrak{a}$ is a Lie affgebra and $\mathfrak{b}$ is ideaf of $\mathfrak{a}$)\\
$=~<<\{a_1, a_2\},\{a_1, b_3\},\{a_1, b_4\}>, b_5, b_6>$\\
$=~<\{a_1, a_2\},\underbrace{\{a_1, b_3\}}_{=b_8\in \mathfrak{b}},\underbrace{<\{a_1, b_4\}, b_5, b_6>}_{=b_7\in \mathfrak{b}}>$
(Since $<-,-,->$ is associative)\\
$=~<\{a_1, a_2\}, b_8, b_7>$.\\
This shows that the affine Lie bracket on $\mathfrak{a}/\mathfrak{b}$ is well defined.

\begin{example}
    Let $\mathfrak{a}$ be a Lie affgebra, and let $\mathfrak{Z}(\mathfrak{a})$ denote its center. If $\mathfrak{Z}(\mathfrak{a})$ satisfies the necessary conditions to be an ideaf, then the quotient $\mathfrak{a}/\mathfrak{Z}(\mathfrak{a})$ is a Lie affgebra.
\end{example}

\section{PRODUCT OF IDEAFS}

In this section, we define \emph{product ideaf} $\{\mathfrak{a}, \mathfrak{b}\}$ for a \emph{Lie affgebra} with an affine Lie bracket $\{-, -\}.$ We also give criterion of \emph{product ideaf} $\{\mathfrak{a}, \mathfrak{b}\}$ in view of the criterion of \emph{ideafs} $\mathfrak{a}, \mathfrak{b}$ of Lie affgebra.

Let $\mathfrak{c}(\mathfrak{g}; \kappa, \lambda, s)$ be a Lie affgebra. Let $\mathfrak{a}( =a + \mathfrak{U})$, $\mathfrak{b}( =b + \mathfrak{V})$ be two (left as well as right) ideafs of $\mathfrak{c}$. Let us define $\mathfrak{I}=\{\mathfrak{a}, \mathfrak{b}\}:=$ smallest affine subspace containing $\{x, y\}$ where $x\in \mathfrak{a}, y \in \mathfrak{b}$. Then $\mathfrak{J}$ is a left ideaf, right ideaf and ideaf of $\mathfrak{c}(\mathfrak{g}; \kappa, \lambda, s)$ with certain conditions on the associated vector spaces $\mathfrak{U}$, $\mathfrak{V},$ $\mathfrak{U}+\mathfrak{V}$ or the product ideal $[\mathfrak{U}, \mathfrak{V}]$ of $\mathfrak{g}$.

The affine subspace $\mathfrak{a}$ is necessarily of the form $\mathfrak{a}=a + \mathfrak{U}$, where $\mathfrak{U}$ is an ideal of $\mathfrak{g}$ and the affine subspace $\mathfrak{b}$ is necessarily of the form $\mathfrak{b}=b + \mathfrak{V}$, where $\mathfrak{V}$ is an ideal of $\mathfrak{g}$. We would like to investigate the underlying affine space for which $\mathfrak{I}=\{\mathfrak{a}, \mathfrak{b}\}$ is a left ideaf of $\mathfrak{c}(\mathfrak{g}; \kappa, \lambda, s)$.

First of all, we calculate for left ideaf. Let $g \in \mathfrak{c}(\mathfrak{g}; \kappa, \lambda, s)$ and $\sum_{i}\lambda_{i}\{a+u_{i}, b+v_{i}\} \in \{\mathfrak{a}, \mathfrak{b}\}$, where $\sum_{i}\lambda_{i}=1$, $u_{i}\in \mathfrak{U}$ and $v_{i}\in \mathfrak{V}.$ Observe that

\begin{align}
\{g, \sum_{i}\lambda_{i}\{a+u_{i}, b+v_{i}\}\}&=[g, \sum_{i}\lambda_{i}\{a+u_{i}, b+v_{i}\}]+\kappa(g)+\lambda(\sum_{i}\lambda_{i}\{a+u_{i}, b+v_{i}\}-g)+s\tag*{}\\
&=\underbrace{\sum_{i}\lambda_{i} [g, \{a+u_{i}, b+v_{i}\}]}_{1^{st} }+(\kappa-\lambda)(g)+s+\underbrace{\sum_{i}\lambda_{i} \lambda(\{a+u_{i}, b+v_{i}\})}_{4^{th}}. \label{eq14}
\end{align}
First of all, we have
\begin{align}
\{a+u_{i}, b+v_{i}\}&=[a+u_{i},b+v_{i}]+\kappa(a+u_{i})+\lambda(b-a+v_{i}-u_{i})+s\tag*{}\\
&=[a, b]+[a, v_{i}]+[u_{i}, b]+[u_{i}, v_{i}]+(\kappa-\lambda)(u_{i})+\lambda(v_{i})+(\kappa-\lambda)(a)+\lambda(b)+s\tag*{}\\
&=[a, b]+(\lambda+ad_{a})(v_{i})+(\kappa-\lambda-ad_{b})(u_{i})+[u_{i}, v_{i}]+(\kappa-\lambda)(a)+\lambda(b)+s. \label{eq15}
\end{align}
Using (\ref{eq15}) in (\ref{eq14}), $1^{st}$ term implies 
\begin{align}
&\sum_{i}\lambda_{i}[g, [a, b]+(\lambda+ad_{a})(v_{i})+(\kappa-\lambda-ad_{b})(u_{i})+[u_{i}, v_{i}]+(\kappa-\lambda)(a)+\lambda(b)+s]\tag*{}\\
=&[g, [a, b]+(\kappa-\lambda)(a)+\lambda(b)+s]+\sum_{i}\lambda_{i}[g, (\lambda+ad_{a})(v_{i})+(\kappa-\lambda-ad_{b})(u_{i})+[u_{i}, v_{i}]],~ \text{since $\sum_{i}\lambda_{i}=1$.}\\
=&[g, \{a, b\}]+\sum_{i}\lambda_{i}[g, \{u_{i}, v_{i}\}-s+ad_{a}(v_{i})-ad_{b}(u_{i})].
\end{align}

Using (\ref{eq15}) in (\ref{eq14}), $4^{th}$ term implies 
\begin{align}
&\sum_{i}\lambda_{i}\lambda([a, b]+(\lambda+ad_{a})(v_{i})+(\kappa-\lambda-ad_{b})(u_{i})+[u_{i}, v_{i}]+(\kappa-\lambda)(a)+\lambda(b)+s)\tag*{}\\
=&\lambda([a, b]+\lambda(b)+s+(\kappa-\lambda)(a))+\sum_{i}\lambda_{i}\lambda([u_{i}, v_{i}]+(\lambda+ad_{a})(v_{i})+(\kappa-\lambda-ad_{b})(u_{i})),~ \text{since $\sum_{i}\lambda_{i}=1.$}\nonumber \\
=&\lambda(\{a, b\})+\sum_{i}\lambda_{i}\lambda(\{u_{i}, v_{i}\}-s+ad_{a}(v_{i})-ad_{b}(u_{i})).
\end{align}
Since $\mathfrak{U},$ $ \mathfrak{V}$ is an ideal of $\mathfrak{g}$ then $\mathfrak{U}+\mathfrak{V}$ is also an ideal of $\mathfrak{g}.$ Since $\mathfrak{a}=a+\mathfrak{U}$, $\mathfrak{b}=b+\mathfrak{V}$ both are ideafs of $\mathfrak{c}$, so using those above conditions on ideals, we get the membership relation. Putting all of them together into (\ref{eq14}), we obtain the following
\begin{align}
&=(\kappa-\lambda)(g)+s+\underbrace{[g, [a, b]+(\kappa-\lambda)(a)+\lambda(b)+s]}_{(1)}+\underbrace{\sum_{i}\lambda_{i}[g, \underbrace{\underbrace{(\lambda+ad_{a})(v_{i})}_{\in \mathfrak{U}}+\underbrace{(\kappa-\lambda-ad_{b})(u_{i})+[u_{i}, v_{i}]}_{\in \mathfrak{V}}]}_{\in \mathfrak{U}+\mathfrak{V}}}_{\in \mathfrak{U}+\mathfrak{V}}+\tag*{}\\
&\underbrace{\lambda([a, b]+\lambda(b)+s+(\kappa-\lambda)(a))}_{(\in \mathfrak{U}, \in \mathfrak{U}+\mathfrak{V}) }+\underbrace{\sum_{i}\lambda_{i}\lambda(\underbrace{\underbrace{[u_{i}, v_{i}]+(\lambda+ad_{a})(v_{i})}_{\in \mathfrak{U}}+\underbrace{(\kappa-\lambda-ad_{b})(u_{i})}_{\in \mathfrak{V}}}_{\in \mathfrak{U}+\mathfrak{V}})}_{\in \mathfrak{U}+\mathfrak{V}}.\label{eq20}
\end{align}
From Equation (\ref{eq20}), 
\begin{align*}
(1)=[g, (\kappa-\lambda)(a)+s-a+(ad_{a}+\lambda)(b)+a]=\underbrace{\underbrace{[g, \underbrace{(\kappa-\lambda)(a)+s-a}_{\in \mathfrak{U}}]}_{\in \mathfrak{U}}+\underbrace{[g, \underbrace{(ad_{a}+\lambda)(b)}_{\in \mathfrak{U}}]}_{\in \mathfrak{U}}}_{\in \mathfrak{U}, \in \mathfrak{U}+\mathfrak{V} }+[g, a].
\end{align*}
Thus, (\ref{eq20}) becomes 
\begin{align*}
&(\kappa-\lambda)(g)+s+[g, a]+\text{ some elements of } \mathfrak{U}+\mathfrak{V}\\
=&\underbrace{(\kappa-\lambda-ad_{a})(g)}_{\in \mathfrak{U}, \in \mathfrak{U}+\mathfrak{V}}+s+ \text{some elements of } \mathfrak{U}+\mathfrak{V}\\
=&s+\text{ some elements of } \mathfrak{U}+\mathfrak{V}.
\end{align*}
Let us consider that the affine space is $s+ (\mathfrak{U}+\mathfrak{V})$ then $\{\mathfrak{a}, \mathfrak{b}\}$ is a left ideaf.

Now, we calculate for the right ideaf.

Similar calculations is applied for the right ideaf of $\{\mathfrak{a}, \mathfrak{b}\}$.
Let $g \in \mathfrak{c}(\mathfrak{g}; \kappa, \lambda, s)$ and $\sum_{i}\lambda_{i}\{a+u_{i}, b+v_{i}\} \in \{\mathfrak{a}, \mathfrak{b}\}$ where $\sum_{i}\lambda_{i}=1$, $u_{i}\in \mathfrak{U}$ and $v_{i}\in \mathfrak{V}.$ We would like to investigate the underlying affine space for which $\mathfrak{I}=\{\mathfrak{a}, \mathfrak{b}\}$ is a right ideaf of $\mathfrak{c}(\mathfrak{g}; \kappa, \lambda, s)$.
\begin{align}
\therefore \{\sum_{i}\lambda_{i}\{a+u_{i}, b+v_{i}\}, g\}&=[\sum_{i}\lambda_{i}\{a+u_{i}, b+v_{i}\}, g]+\kappa(\sum_{i}\lambda_{i}\{a+u_{i}, b+v_{i}\})+\lambda(g-\sum_{i}\lambda_{i}\{a+u_{i}, b+v_{i}\})+s\tag*{}\\
&=\sum_{i}\lambda_{i}[\{a+u_{i}, b+v_{i}\},g]+\sum_{i}\lambda_{i}\kappa(\{a+u_{i}, b+v_{i}\})-\sum_{i}\lambda_{i}\lambda(\{a+u_{i}, b+v_{i}\})+\lambda(g)+s\tag*{}\\
&=\lambda(g)+s+\underbrace{\sum_{i}\lambda_{i}[\{a+u_{i}, b+v_{i}\},g]}_{3^{rd} term}+\underbrace{\sum_{i}\lambda_{i}(\kappa-\lambda)(\{a+u_{i}, b+v_{i}\})}_{4^{th} term}. \label{eq21}
\end{align}
Using (\ref{eq15}) in (\ref{eq21}), $3^{rd}$ term implies 
\begin{align*}
&\sum_{i}\lambda_{i}[\{a+u_{i}, b+v_{i}\},g]=\sum_{i}\lambda_{i}[[a, b]+(\lambda+ad_{a})(v_{i})+(\kappa-\lambda-ad_{b})(u_{i})+[u_{i}, v_{i}]+(\kappa-\lambda)(a)+\lambda(b)+s,g]\\
=&\sum_{i}\lambda_{i}[[a,b]+(\kappa-\lambda)(a)+\lambda(b)+s, g]+\sum_{i}\lambda_{i}[(\lambda+ad_{a})(v_{i}),g]+\sum_{i}\lambda_{i}[(\kappa-\lambda-ad_{b})(u_{i}),g]+\sum_{i}\lambda_{i}[[u_{i}, v_{i}],g]\\
=&[[a,b]+(\kappa-\lambda)(a)+\lambda(b)+s, g]+\underbrace{\underbrace{\sum_{i}\lambda_{i}\underbrace{[\underbrace{(\lambda+ad_{a})(v_{i})}_{\in \mathfrak{U}},g]}_{\in \mathfrak{U}}}_{\in \mathfrak{U}}+\underbrace{\sum_{i}\lambda_{i}\underbrace{[\underbrace{(\kappa-\lambda-ad_{b})(u_{i})}_{\in \mathfrak{V}},g]}_{\in \mathfrak{V}}}_{\in \mathfrak{V}}+\underbrace{\sum_{i}\lambda_{i}\underbrace{[\underbrace{[u_{i}, v_{i}]}_{\in \mathfrak{V}},g]}_{\in \mathfrak{V}}}_{\in \mathfrak{V}}}_{\in \mathfrak{U+V}}\\
=&[(\lambda+ad_{a})(b)+(\kappa-\lambda)(a)+s-a+a,g] + \text{some elements of $\mathfrak{U+V}$}\\
=&[a,g]+\underbrace{[\underbrace{\underbrace{(\lambda+ad_{a})(b)}_{\in \mathfrak{U}}+\underbrace{(\kappa-\lambda)(a)+s-a}_{\in \mathfrak{\mathfrak{U}}}}_{\in \mathfrak{U}}, g]}_{\in \mathfrak{U}/\in \mathfrak{U+V}}+ \text{some elements of $\mathfrak{U+V}$}\\
=&[a,g]+\text{ some elements of $\mathfrak{U+V}$}.
\end{align*}

Using (\ref{eq15}) in (\ref{eq21}), $4^{th}$ term implies
\begin{align*}
&\sum_{i}\lambda_{i}(\kappa-\lambda)(\{a+u_{i}, b+v_{i}\})=\sum_{i}\lambda_{i}(\kappa-\lambda)([a, b]+(\lambda+ad_{a})(v_{i})+(\kappa-\lambda-ad_{b})(u_{i})+[u_{i}, v_{i}]+(\kappa-\lambda)(a)+\lambda(b)+s)\\
=&\sum_{i}\lambda_{i}(\kappa-\lambda)([a, b]+\underbrace{(\lambda+ad_{a})(v_{i})}_{\in \mathfrak{U}}+\underbrace{(\kappa-\lambda-ad_{b})(u_{i})}_{\in \mathfrak{V}}+\underbrace{[u_{i}, v_{i}]}_{\in \mathfrak{V}}+(\kappa-\lambda)(a)+\lambda(b)+s)\\
=&\sum_{i}\lambda_{i}(\kappa-\lambda)(\underbrace{(\lambda+ad_{a})(v_{i})}_{\in \mathfrak{U}}+\underbrace{(\kappa-\lambda-ad_{b})(u_{i})}_{\in \mathfrak{V}}+\underbrace{[u_{i}, v_{i}]}_{\in \mathfrak{V}}+[a,b]+(\kappa-\lambda)(a)+\lambda(b)+s)\\
=&\sum_{i}\lambda_{i}(\kappa-\lambda)(\underbrace{(\lambda+ad_{a})(v_{i})}_{\in \mathfrak{U}}+\underbrace{(\kappa-\lambda-ad_{b})(u_{i})}_{\in \mathfrak{V}}+\underbrace{[u_{i}, v_{i}]}_{\in \mathfrak{V}}+\underbrace{(\lambda+ad_{a})(b)}_{\in \mathfrak{U}}+\underbrace{(\kappa-\lambda)(a)+s-a}_{\in \mathfrak{U}}+a)\\
=&\sum_{i}\lambda_{i}(\kappa-\lambda)(a)+\underbrace{\sum_{i}\lambda_{i}\underbrace{(\kappa-\lambda)\underbrace{(\underbrace{(\kappa-\lambda-ad_{b})(u_{i})}_{\in \mathfrak{V}}+\underbrace{[u_{i}, v_{i}]}_{\in \mathfrak{V}})}_{\in \mathfrak{V}}}_{\in \mathfrak{V}}}_{\in \mathfrak{V}}+\\
&\underbrace{\sum_{i}\lambda_{i}\underbrace{(\kappa-\lambda)\underbrace{(\underbrace{(\lambda+ad_{a})(v_{i})}_{\in \mathfrak{U}}+\underbrace{(\lambda+ad_{a})(b)}_{\in \mathfrak{U}}+\underbrace{(\kappa-\lambda)(a)+s-a}_{\in \mathfrak{U}}))}_{\in \mathfrak{U}}}_{\in \mathfrak{U}}}_{\in \mathfrak{U}}
\end{align*}
Since $\sum_{i}\lambda_{i}=1,$ $4^{th}$ term $=(\kappa-\lambda)(a)+$ some of elements of $\mathfrak{U+V}$.\\

Thus,  using $3^{rd}$ term and $4^{th}$ term (\ref{eq21}) becomes
\begin{align*}
&\lambda(g)+s+[a,g]+(\kappa-\lambda)(a)+\text{ some of elements of $\mathfrak{U+V}$}\\
=&(\lambda+ad_{a})(g)+(\kappa-\lambda)(a)+s+\text{some of elements of $\mathfrak{U+V}$}.
\end{align*}
Since $ad_{a}(a)=0$, (\ref{eq21})$=\underbrace{(\lambda+ad_{a})(g)}_{\in \mathfrak{U}/\in \mathfrak{U+V}}+\underbrace{(\kappa-\lambda-ad_{a})(a)}_{\in \mathfrak{U}/\in \mathfrak{U+V}}+s+$ some of elements of $\mathfrak{U+V}$.

Let us consider the affine space $s+(\mathfrak{U+V})$, then $\{\mathfrak{a}, \mathfrak{b}\}$ is a right ideaf.

Thus, $\{\mathfrak{a}, \mathfrak{b}\}$ is an ideaf of $\mathfrak{c}(\mathfrak{g}; \kappa, \lambda, s)$, whose underlying affine space is $s+(\mathfrak{U+V})$.

\section{CONCEPT OF AFFINE NILPOTENCY IN LIE AFFGEBRA}

In this section, we introduce the concept of nilpotency of Lie affgebra $\mathfrak{a}(\mathfrak{g}; \kappa, \lambda, s)$.

Let $\mathfrak{a}$ be a Lie affgebra with affine Lie bracket $\{-,-\}$. Let us consider a subset $E$ of $\mathfrak{a}$ defined by 
\begin{equation}\label{set E}
E:=\{ x\in \mathfrak{a}~|~ \{x,a\}=\{a, x\}=x, \forall~ a\in \mathfrak{a}\}.
\end{equation}
Note that either $E$ is empty set or $E$ is a singleton set (why?). If $E\neq \emptyset$, choose $x, y\in E$, then by the definition of $E,$ we conclude that $x=y.$\\
Recall, for all $o$ in $\mathfrak{a}$, $T_o\mathfrak{a}$ is a Lie algebra with the bracket $$[a, b]:=\{a,b\}-\{a,o\}+\{o,o\}-\{o,b\},$$
for all $a. b\in T_o\mathfrak{a}.$ Then, we may consider the following subset of $T_o\mathfrak{a}$
\begin{equation}\label{set E0}
E_o:=\{ x\in T_o\mathfrak{a}~|~ [x, a]=[a, x]=x, \forall~ a\in T_o\mathfrak{a}\}.
\end{equation}
We show that $E_o=\{o\}.$\\
First of all, we show that $o\in E_o$. Let $a\in T_o\mathfrak{a}$ be an arbitrary element. Then $[o,a]=\{o,a\}-\{o,o\}+\{o,o\}-\{o,a\}=o$ and also $[a,o]=\{a,o\}-\{a,o\}+\{o,o\}-\{o,o\}=o.$ Thus, $[o,a]=[a,o]=o$, $\forall~ a\in T_o\mathfrak{a}$, i.e. $o\in E_o.$\\ If possible, let $x(\neq o)\in E_o$. Then $[x, a]=[a,x]=x, \forall~ a\in T_o\mathfrak{a}$. Since the last equation holds for all $a\in T_o\mathfrak{a}$, in particular, we may choose $a=x$, then we get $$[x,x]=x, \quad [x,x]=x. $$
This implies \begin{align*}
 x&=[x,x]\\
 &=\{x,x\}-\{x,o\}+\{o,o\}-\{o,x\}\\
 &=\{x,x\}-(\{x,o\}-\{o,o\}+\{o,x\})\\
 &=\{x,x\}-\{x,x\}\\
 &=o.
\end{align*}
This leads to a contradiction of the assumption on our choice of $x.$ Hence, our claim is verified. 

\begin{definition}
    A Lie affgebra $(\mathfrak{a},\{-,-\})$ is said to be affinely nilpotent Lie affgebra if for the sequence of (product) ideafs (left as well as right ideafs) $(\mathfrak{a}^{(m)})_{m \in \mathbb{Z}_{\geq 0}}$ of $\mathfrak{a}(\mathfrak{g}; \kappa, \lambda, s)$  defined by $\mathfrak{a}^{(0)}:=\mathfrak{a}$, $\mathfrak{a}^{(1)}:=\{\mathfrak{a}, \mathfrak{a}\}$, $\mathfrak{a}^{(2)}:=\{\mathfrak{a}, \mathfrak{a}^{(1)}\}$, $\ldots,$~ $ \mathfrak{a}^{(m)}:=\{\mathfrak{a}, \mathfrak{a}^{(m-1)}\}, \ldots $, there exists a positive integer $n$ such that $\mathfrak{a}^{(n+k)}=\mathfrak{a}^{(n)}=E$ for all $k\in \mathbb{N}$.
\end{definition}
The $n$ in the above definition is called degree/index of the nilpotency.

Let $T_o\mathfrak{a}$ be a Lie algebra induced from the Lie affgebra $\mathfrak{a}$ by retracting to the point $o.$ Then the definition of affine nilpotency in the sense of the Lie affgebra $\mathfrak{a}$ leads to the definition of nilpotency in sense of the Lie algebra $T_o\mathfrak{a}$. Thus, $$(T_o\mathfrak{a})^{(n)}=(T_o\mathfrak{a})^{(n+k)}=E_o=\{o\}, \forall~ k\geq 1.$$
This shows that $T_o\mathfrak{a}$ is a nilpotent Lie algebra of index $n.$
\begin{theorem}\label{affgebra nilpotent to algebra nilpotent}
    Let $(\mathfrak{a}, \{-, -\})$ be an affinely nilpotent Lie affgebra of index $n$, and $o$ be any element of $\mathfrak{a}$. Then $(T_o\mathfrak{a}, [-,-])$ is a nilpotent Lie algebra of same index.
    
\end{theorem}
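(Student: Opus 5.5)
Fix $o\in\mathfrak{a}$. The plan is to compare the affine lower central series $(\mathfrak{a}^{(m)})$ with the ordinary lower central series of $T_o\mathfrak{a}$ through the retraction at $o$. Since every Lie affgebra is of the form $\mathfrak{a}(T_o\mathfrak{a};\kappa,\lambda,s)$ with $\{a,b\}=[a,b]+\kappa(a)+\lambda(b-a)+s$ (the converse part of the structure theorem of \cite{And2}), I will work throughout in the vector space $\mathfrak{g}=T_o\mathfrak{a}$ with $o$ as origin. I also use that the tangent bracket is recovered from $\{-,-\}$ by the four-term formula $[a,b]=\{a,b\}-\{a,o\}+\{o,o\}-\{o,b\}$.

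\textbf{Step 1.} By induction on $m$, I will show that the vector space (direction) underlying $\mathfrak{a}^{(m)}$ contains $\mathfrak{g}^{(m)}:=[\mathfrak{g},\mathfrak{g}^{(m-1)}]$, with $\mathfrak{g}^{(0)}=\mathfrak{g}$. Write $\mathfrak{a}^{(m-1)}=c+\mathfrak{h}_{m-1}$ with $\mathfrak{g}^{(m-1)}\subseteq\mathfrak{h}_{m-1}$. For $x\in\mathfrak{a}$ and $c+y\in\mathfrak{a}^{(m-1)}$, the elements
\[
\{x,c+y\}-\{x,c\}+\{0,c\}-\{0,c+y\}=[x,y]
\]
are affine combinations of brackets of $\mathfrak{a}$ with $\mathfrak{a}^{(m-1)}$, since the coefficients sum to $1-1+1-1=0$. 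Hence $[x,y]$ lies in the direction space of the affine hull $\mathfrak{a}^{(m)}$, which gives $\mathfrak{g}^{(m)}\subseteq\mathfrak{h}_m$. This is the identity that produced the definition of the tangent bracket, now applied setwise.

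\textbf{Step 2.} Affine nilpotency of index $n$ gives $\mathfrak{a}^{(n)}=E$, which is a single point, so $\mathfrak{h}_n=0$. By Step 1, $\mathfrak{g}^{(n)}=0$, so $T_o\mathfrak{a}$ is nilpotent of index at most $n$. This matches the earlier computation $E_o=\{o\}$: a zero direction space corresponds to the singleton $\{o\}$ in $T_o\mathfrak{a}$.

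\textbf{Step 3 (main obstacle).} Showing that the index is exactly $n$ needs the reverse comparison: if $\mathfrak{g}^{(k)}=0$ for some $k<n$, then $\mathfrak{a}^{(k)}$ should already be a point. Following the product-ideaf computation of Section 5, the direction of $\mathfrak{a}^{(m)}$ is spanned by $[\mathfrak{g},\mathfrak{h}_{m-1}]$ together with images of $\mathfrak{h}_{m-1}$ under $\lambda$, $\kappa-\lambda$ and $\mathrm{ad}$-shifts. I would first try to show that, once the series stabilises at a point, these extra terms lie in $\mathfrak{g}^{(m)}$, using the ideaf conditions of Propositions \ref{1} and \ref{2} applied to the chain. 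If that fails in general, I would settle for the inequality $\mathrm{index}(T_o\mathfrak{a})\le n$ and read ``same index'' in the sense the paper uses just before the statement, where the sequences $(T_o\mathfrak{a})^{(m)}$ and $\mathfrak{a}^{(m)}$ are identified under retraction. Finally, the result is independent of $o$, because all the $T_o\mathfrak{a}$ are isomorphic.
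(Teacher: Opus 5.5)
Your Steps 1 and 2 are correct, and they are the paper's argument in a cleaner form. The paper expands $[a_1,[a_2,[\cdots]]]$ with the four-term formula into an alternating sum of nested affine brackets, each equal to $e$, with the $\{a,o\}$ and $\{o,o\}$ terms cancelling. It then dismisses general $n$ as a ``similar computation''. Your identity $\{x,c+y\}-\{x,c\}+\{o,c\}-\{o,c+y\}=[x,y]$, read as ``a zero-sum combination of points of $\mathfrak{a}^{(m)}$ lies in its direction space'', turns this into a genuine induction. It gives $[\mathfrak{g},\mathfrak{h}_{m-1}]\subseteq\mathfrak{h}_m$ and hence $\mathfrak{g}^{(n)}=0$.

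The gap is Step 3. As written you only prove that the index of $T_o\mathfrak{a}$ is at most $n$, and falling back on reinterpreting ``same index'' is not a proof. The paper's proof has the same omission: it only checks $(T_o\mathfrak{a})^{(n)}=\{o\}$ and never shows minimality.

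Your sketch of the reverse direction also needs correcting. With $\mathfrak{a}^{(m-1)}=c+\mathfrak{h}_{m-1}$, the extra contribution to the direction of $\mathfrak{a}^{(m)}$ is $\lambda(\mathfrak{h}_{m-1})+(\kappa-\lambda-\mathrm{ad}_c)(\mathfrak{g})$. The second piece involves all of $\mathfrak{g}$, so you need global information about $\kappa$ and $\lambda$.

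That information comes from the point of $E$. Since $\mathfrak{a}^{(n)}=E$ is nonempty, $E=\{e\}$.
\begin{itemize}
\item The condition $\{e,a\}=e$ for all $a$ has linear part $\mathrm{ad}_e+\lambda$ in $a$, which forces $\lambda=-\mathrm{ad}_e$.
\item The condition $\{a,e\}=e$ for all $a$ then forces $\kappa=\kappa-\lambda-\mathrm{ad}_e=0$, and its constant term gives $s=e$.
\end{itemize}
Equivalently, apply the left and right ideaf criteria of Section 3 to the last term $\{e\}$ of the chain, with $\mathfrak{h}=0$. This is the concrete form of your ``ideaf conditions applied to the chain''.

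Hence $\{a,b\}=[a-e,b-e]+e$. The set $\{[x,y]: x\in\mathfrak{g},\,y\in\mathfrak{g}^{(m-1)}\}$ contains $0$, so its affine hull is its span. Induction then gives $\mathfrak{a}^{(m)}=e+\mathfrak{g}^{(m)}$ exactly, for every $m$.

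Therefore $\mathfrak{a}^{(m)}=E$ if and only if $\mathfrak{g}^{(m)}=0$, so the two minimal indices coincide. This also upgrades your Step 1 to the equality $\mathfrak{h}_m=\mathfrak{g}^{(m)}$.
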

\begin{proof}
For $n=0$, the proof is trivial. For $n=1$, $\mathfrak{a}^{(1)}=\{\mathfrak{a}, \mathfrak{a}\}=E=\{e\}.$ Since $E$ is a singleton set, we may assume $E=\{e\}$. We prove that $T_o\mathfrak{a}$ is a nilpotent Lie algebra of index $1.$ Since $(T_o\mathfrak{a})^{(1)}=[T_o\mathfrak{a}, T_o\mathfrak{a}].$ We need to show that $[T_o\mathfrak{a}, T_o\mathfrak{a}]=E_o=\lbrace o \rbrace$. Now, $[a, b]:=\{a, b\}-\{a,o\}+\{o,o\}-\{o,b\}=e-e+e-e= o.$ Thus, $T_o\mathfrak{a}$ is a nilpotent Lie algebra of index $1.$ For $n=2,$ $\mathfrak{a}^{(2)}=\{\mathfrak{a}, \mathfrak{a}^{(1)}\}=E=\{e\}$,~i.e., $\{a, \{b, c\}\}=e, \forall~ a, b,c \in \mathfrak{a}.$  We need to show that $[T_o\mathfrak{a}, [T_o\mathfrak{a}, T_o\mathfrak{a}]]=E_o=o$. For any $a, b, c\in T_o\mathfrak{a}$, 
 \begin{align*}
      \quad \quad [a,[b, c]] &\ =[a, \{b, c\}-\{b, o\}+\{o,o\}-\{o,  \,c\}]\\
     &\ =  [a, \{b,c\}]-[a, \{b,o\}]+[a, \{o,o\}]-[a, \{o,c\}]\\
     &\ = \{a,\{b, c\}\}- \{a, o\}+\{o, o\}-\{o, \{b, c\}\}\\
     &\ \quad -\{a,\{b, o\}\}+ \{a, o\}-\{o, o\}+\{o, \{b, o\}\}\\
     &\ \quad +\{a,\{o, o\}\}- \{a, o\}+\{o, o\}-\{o, \{o, o\}\}\\
     &\ \quad -\{a,\{o, c\}\}+ \{a, o\}-\{o, o\}+\{o, \{o, c\}\}\\
     &\ = e-e-e+e+e-e-e+e\\
     &\ =o.
 \end{align*}
 Hence, $T_o\mathfrak{a}$ is a nilpotent Lie algebra of index $2.$ For general index $n$, similar computation will be carried out, i.e., $[a_1, [a_2, [\cdots a_{n+1}\underbrace{]]]\cdots ]}_{n \text{-terms }}=$ Sum of $2^{n+1}$ elements with alternating sign of $\{x_1, \{x_2, \{\cdots x_{n+1}\underbrace{\}\}\}\cdots \}}_{n \text{-terms }}=e,$ for some $x_i\in \mathfrak{a}$. Thus, $[a_1, [a_2, [\cdots a_{n+1}\underbrace{]]]\cdots ]}_{n \text{-terms }}=o.$ Hence, $T_o\mathfrak{a}$ is a nilpotent Lie algebra of index $n.$
\end{proof}


\begin{theorem}
    Let $\mathfrak{a}=(T_o\mathfrak{a}, 0, 0; s)$ is an affinely nilpotent Lie affgebra if and only if $T_o\mathfrak{a}$ is a nilpotent Lie algebra, whenever $\text{ad}_s=0.$
\end{theorem}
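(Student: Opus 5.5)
The plan is to compute the descending sequence $\mathfrak{a}^{(m)}$ explicitly for $\mathfrak{a}=\mathfrak{a}(\mathfrak{g};0,0,s)$ with $\mathfrak{g}=T_o\mathfrak{a}$. The aim is to show that, when $\text{ad}_s=0$, it is the translate by $s$ of the lower central series of $\mathfrak{g}$. Write $\mathfrak{g}^{(0)}=\mathfrak{g}$ and $\mathfrak{g}^{(m)}=[\mathfrak{g},\mathfrak{g}^{(m-1)}]$ for the lower central series.

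The direction ``affinely nilpotent $\Rightarrow$ nilpotent'' needs no new work. It is exactly Theorem \ref{affgebra nilpotent to algebra nilpotent} applied at the point $o$. By the earlier theorem of \cite{And2}, $T_o\mathfrak{a}(\mathfrak{g};0,0,s)\simeq\mathfrak{g}$, so that theorem gives nilpotency of $\mathfrak{g}$ with the same index.

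For the converse, I first specialise the bracket. With $\kappa=\lambda=0$ we have $\{a,b\}=[a,b]+s$. Condition (\ref{eq}) holds trivially, so this is a genuine Lie affgebra.

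Next I identify $E$. For every $a\in\mathfrak{a}$ we get $\{s,a\}=[s,a]+s=s$ and $\{a,s\}=[a,s]+s=s$, because $\text{ad}_s=0$. Hence $s\in E$, and since $E$ has at most one element, $E=\{s\}$.

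The key claim, proved by induction on $m\ge 1$, is
\[
\mathfrak{a}^{(m)}=s+\mathfrak{g}^{(m)}.
\]
For $m=1$, the generators of $\{\mathfrak{a},\mathfrak{a}\}$ are the elements $s+[x,y]$. Their smallest affine subspace is $s$ plus the affine hull of the set $\{[x,y]\}$. That set contains $0=[x,0]$ and is closed under scalar multiples, so its affine hull is its linear span $\mathfrak{g}^{(1)}$.

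For the inductive step, take $x\in\mathfrak{a}$ and $s+z\in\mathfrak{a}^{(m-1)}$ with $z\in\mathfrak{g}^{(m-1)}$. Using $\text{ad}_s=0$ again,
\[
\{x,s+z\}=[x,s+z]+s=[x,z]+s.
\]
The same hull-equals-span argument then gives $\mathfrak{a}^{(m)}=s+[\mathfrak{g},\mathfrak{g}^{(m-1)}]=s+\mathfrak{g}^{(m)}$.

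Consequently, if $\mathfrak{g}^{(n)}=0$, then $\mathfrak{a}^{(n+k)}=\{s\}=E$ for all $k\ge 0$. So $\mathfrak{a}$ is affinely nilpotent of the same index, and the correspondence of indices also re-confirms the forward direction.

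The step I expect to need the most care is the identification of the ``smallest affine subspace'' generated by the iterated brackets with $s+\mathfrak{g}^{(m)}$. Two things must be justified there. First, affine combinations $\sum_i\lambda_i(s+w_i)$ with $\sum_i\lambda_i=1$ equal $s+\sum_i\lambda_i w_i$. Second, because $0$ lies in the generating set and the set is stable under scaling (use $[\alpha x,z]$), these affine combinations exhaust the full linear span rather than a proper affine subset of it.

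Throughout, the hypothesis $\text{ad}_s=0$ is used in two places. It kills the cross term $[x,s]$ in each inductive step, and it makes $s$ the absorbing element that forms $E$.
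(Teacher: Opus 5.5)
Your proposal is correct and follows essentially the same route as the paper. The forward direction is quoted from Theorem~\ref{affgebra nilpotent to algebra nilpotent}, and the converse uses $\text{ad}_s=0$ to reduce iterated affine brackets to $s$ plus iterated Lie brackets and then checks that $s\in E$. Your induction $\mathfrak{a}^{(m)}=s+\mathfrak{g}^{(m)}$ is a useful tightening: the affine hull of a set containing $0$ is its linear span, so it correctly passes from individual iterated brackets to the generated affine subspaces, a step the paper's element-wise computation leaves implicit.
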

\begin{proof}
    Implication part is follows from the Theorem \ref{affgebra nilpotent to algebra nilpotent}.
    Let $T_o\mathfrak{a}$ is a nilpotent Lie algebra of index $n$ and $\text{ad}_s=0.$ 
    For $a_1, a_2, \cdots, a_n \in \mathfrak{a}$, 
    
\begin{align*}
    &\{a_1, \{a_2,\{a_3,\cdots \{a_{n-1}, a_n\}\}\cdots\},\\
    =&\{a_1, \{a_2,\{a_3,\cdots, \{a_{n-2} ,[a_{n-1}, a_n]+s\}\cdots\},\\
    =&\{a_1, \{a_2,\{a_3,\cdots, \{a_{n-3}, [a_{n-2} ,[a_{n-1}, a_n]]+\underbrace{[a_{n-2},s]}_{\text{=0, as ad}_s=0}+s\}\cdots\},\\
    =& \{a_1, \{a_2,\{a_3,\cdots,\{a_{n-4}, [a_{n-3}, [a_{n-2} ,[a_{n-1}, a_n]]]+s\} \cdots\},\\
    \vdots\\
    =& \underbrace{[a_1, [a_2, [a_3,\cdots, [a_{n-1}, a_{n}]]]\cdots]}_{\text{=0, since the underlying Lie algebra is nilpotent of index n} }+s\\
    =& s.
\end{align*}
Next, we show that $s\in E.$ Let $x\in \mathfrak{a}$, $$\{x,s\}=\underbrace{[x,s]}_{=o,\text{ as ad}_s=o} +s=s,$$ 
\text{and}
$$\{s, x\}=\underbrace{[s,x]}_{=o,\text{ as ad}_s=o}+s=s.$$

This completes the proof.
\end{proof}
\begin{remark}
\begin{itemize}
\item[(i)] From the set $E$ in (\ref{set E}) and the definition of center $\mathfrak{z(\mathfrak{a})}$ of $\mathfrak{a}$, we have $E\subseteq \mathfrak{z(\mathfrak{a})}$.

\item[(ii)] Let $(\mathfrak{a},\{-,-\})$ be a non-empty affinely nilpotent affgebra, then $\mathfrak{z(\mathfrak{a})}$ is non-empty.

\item[(iii)]
Let $(\mathfrak{a},\{-,-\})$ be an affinely nilpotent affgebra and $\mathfrak{I}$ be a non-empty ideaf of $\mathfrak{a}$, then $\mathfrak{I}\cap \mathfrak{z(\mathfrak{a})}$ is non-empty.
\end{itemize}
\end{remark}

\begin{definition}(Nilpotency of an Affine map)

Let $\mathfrak{a}$ be an affine space. An affine map $f: \mathfrak{a}\rightarrow \mathfrak{a}$ is said to be an affinely nilpotent of index $n$ if $f^{n}=f^{n+k}=\text{constant}, \forall~ k\in \mathbb{N}$.
\end{definition}
\begin{theorem}(Nilpotency of Affine map $\iff$ Nilpotency of the corresponding Linear map)\label{NilpotentAffine1}

Let $\mathfrak{a}$ be an affine space. Let $f:\mathfrak{a}\rightarrow \mathfrak{a}$ be an affine map of affinely nilpotent index $n$ if and only if the corresponding linear map $\hat{f}:T_{o}\mathfrak{a}\rightarrow T_{o}\mathfrak{a}$ is a nilpotent map of index $n,$ where $o\in \mathfrak{a}$.
\end{theorem}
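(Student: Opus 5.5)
The plan is to reduce everything to the vector space picture at a base point $o$. Fix $o\in\mathfrak{a}$ and identify $\mathfrak{a}$ with $T_o\mathfrak{a}$, so that $<x,y,z>=x-y+z$ and $\alpha\triangleleft_x y=(1-\alpha)x+\alpha y$. Any affine map $f:\mathfrak{a}\to\mathfrak{a}$ can then be written uniquely as
$$f(x)=\hat f(x)+c,\qquad \hat f(x):=f(x)-f(o)=<f(x),f(o),o>,\quad c:=f(o).$$
Here $\hat f$ is the linear part on $T_o\mathfrak{a}$. Its additivity comes from $f$ being a heap morphism, and its homogeneity from $f$ preserving $\triangleleft_o$. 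This is exactly the ``corresponding linear map'' of the statement.

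The first step is to compute the iterates by induction on $m$:
$$f^m(x)=\hat f^{\,m}(x)+c_m,\qquad c_m=\sum_{j=0}^{m-1}\hat f^{\,j}(c).$$
This follows from $f^{m+1}(x)=\hat f(\hat f^{\,m}(x)+c_m)+c=\hat f^{\,m+1}(x)+\hat f(c_m)+c$. In particular, the linear part of $f^m$ is $\hat f^{\,m}$. An affine map is constant if and only if its linear part vanishes, since $f^m(x)-f^m(o)=\hat f^{\,m}(x)$. Hence $f^m$ is constant if and only if $\hat f^{\,m}=0$.

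For the forward direction, suppose $f^n=f^{n+k}=\text{const}$ for all $k$. The observation above gives $\hat f^{\,n}=0$, so $\hat f$ is nilpotent.

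For the converse, suppose $\hat f^{\,n}=0$. Then $f^{n+k}(x)=c_{n+k}$ is constant for every $k\ge 0$. The point requiring care is that the definition also demands these constants agree, i.e. $f^{n}=f^{n+k}$. We have $c_{n+k}-c_n=\sum_{j=n}^{n+k-1}\hat f^{\,j}(c)=0$, because $\hat f^{\,j}=0$ for $j\ge n$. So all the constants coincide, and the common value $c_n$ is the unique fixed point of $f$: indeed $f(c_n)=f^{n+1}(o)=c_n$.

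It remains to compare indices, which is the main obstacle. Read ``index'' as the least such $n$. Then both notions are governed by the same condition ``$\hat f^{\,m}=0$'': on the affine side, $f^m$ constant (with stabilization automatic) is equivalent to $\hat f^{\,m}=0$. The least $m$ for which $f^m$ is constant therefore equals the least $m$ with $\hat f^{\,m}=0$, so the indices agree.

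Finally, independence of the choice of $o$ follows because changing the base point conjugates the linear part by the translation identification $T_o\mathfrak{a}\cong T_{o'}\mathfrak{a}$. One can check directly that $\hat f$ computed at $o'$ equals $x\mapsto f(x)-f(o')$, which, as a map of difference vectors, is the same linear map.
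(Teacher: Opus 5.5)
Your proposal is correct and follows essentially the same route as the paper: write $f(x)=\hat f(x)+f(o)$, iterate to get $f^{m}(x)=\hat f^{\,m}(x)+\text{const}$, and conclude that $f^{m}$ is constant exactly when $\hat f^{\,m}=0$ (the paper evaluates at $x=o$, you subtract $f^{m}(o)$). You are more careful than the paper: you verify that the constants $c_{n+k}$ all equal $c_n$ (so $f^{n}=f^{n+k}$, as the definition requires) and that the minimal indices agree, both of which the paper's converse leaves implicit.
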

\begin{proof}
    Let $f$ is an affine map of affinely nilpotent index $n$, then $f(x)=\hat{f}(x)+f(o)$, where $o\in \mathfrak{a}$, $\forall~ x\in \mathfrak{a}$. 
    \begin{align*}
        f^2(x)&=\hat{f}^2(x)+\underbrace{\hat{f}(f(o))+f(o)}_{\text{constant}},\\
        f^3(x)&=\hat{f}^3(x)+\underbrace{\hat{f}^2(f(o))+\hat{f}(f(o))+f(o)}_{\text{constant}},\\
        &\vdots\\
        f^n(x)&=\hat{f}^n(x)+\underbrace{\hat{f}^{n-1}(f(o))+\cdots +\hat{f}(f(o))+f(o)}_{\text{constant}}.
    \end{align*}
    Since $f^n(x)=\text{constant}$, then $\hat{f}^n(x)=\text{constant}, \forall~ x\in \mathfrak{a}$. Choose $x=o,$ then we get the desired result. For converse part, from the above computation if $\hat{f}^n(x)=o$, then $f^n(x)=\text{constant}$.
\end{proof}

\subsection{ILLUSTRATIVE EXAMPLES}
\subsubsection{ABELIAN LIE  AFFGEBRA}
Let $\mathfrak{a}(\mathfrak{g};0,0,s)$ be an abelian Lie affgebra of special type of affine Lie bracket of the form $\{a,b\}=s$ and $o$ be any element in $\mathfrak{a}$. 
We show that, $\mathfrak{a}$ is a nilpotent Lie affgebra.\\
$\therefore \{a, b\}=s\in \{\mathfrak{a}, \mathfrak{a}\}(=\mathfrak{a}^{(1)})$ as $\mathfrak{a}$ is an abelian Lie affgebra.\\
and $\{c,\{a,b\}\}=\{c,s\}=s\in \{\mathfrak{a}, \mathfrak{a^{(1)}}\}(=\mathfrak{a}^{(2)})$ as $\mathfrak{a}$ is an abelian Lie affgebra. Since $a, b, c \in \mathfrak{a}$ are all arbitrary, thus, the typical elements of $\mathfrak{a}^{(1)}, \mathfrak{a}^{(2)},\mathfrak{a}^{(3)}, \mathfrak{a}^{(4)},\ldots$ are all equal (as in a similar fashion, we can compute $\mathfrak{a}^{(i)}$). Hence, there exist $n=1\in \mathbb{N}$ such that $\mathfrak{a}^{(1)}=\mathfrak{a}^{(1+k)}=E$, for all $k\in \mathbb{N}.$ Thus, $\mathfrak{a}$ is affinely nilpotent Lie affgebra.

\subsubsection{AFFINE NILPOTENCY OF HEISENBERG LIE AFFGEBRA}\label{oneside}
Let $(\mathbb{H}, [-, -])$ be Heisenberg nilpotent Lie algebra of dimension $3$ and $\kappa, \lambda \in$ End$(\mathbb{H})$ such that, for all $x, y \in \mathbb{H}$, 
\begin{equation}\label{gender}
    \lambda([x, y])=[\lambda(x), y]+[x, \lambda(y)]-[x, \kappa(y)]
\end{equation}
Then, for all $s\in \mathbb{H}$, $\mathbb{H}$ is a Lie affgebra with the affine space structure 
\begin{equation}
    <x, y, z>=x-y+z, \alpha\triangleright_{x}y=(1-\alpha)x+\alpha y
\end{equation} 
and the affine Lie bracket, for all $x, y \in \mathbb{H},$
\begin{equation}
    \{x, y\}=[x, y]+\kappa(x)+\lambda(y-x)+s.
\end{equation}
 We denote the Heisenberg Lie affgebra by $\mathfrak{a}(\mathbb{H}; \kappa, \lambda, s).$
 \begin{enumerate}
     \item \label{1.} Let $\kappa=\lambda=0, \text{and}~ \text{ad}_s=0$, then the affine Lie bracket becomes $\{x, y\}=[x, y]+s \in \{\mathfrak{a}, \mathfrak{a}\}(=\mathfrak{a}^{(1)})$. Observe that

$(i)$ $\{z, \{x, y\}\}=\{z, [x, y]+s\}=[z, [x, y]+s]+s=[z,[x, y]]+[z, s]+s=0+[z, s]+s=[z, s]+s=ad_{s}(-z)+s=s$ (as $[z,[x, y]]=0$ for all $x, y, z \in \mathbb{H}$) $ \in  \{\mathfrak{a}, \mathfrak{a}^{(1)}\}(=\mathfrak{a}^{(2)}).$

$(ii)$ $\{w, \{z, \{x, y\}\}\}=\{w, [z, s]+s\}$(by $(i)$)$=[w, [z, s]]+s=[w, [z, s]]+[w, s]+s =0+[w, s]+s=[w, s]+s =ad_{s}(-w)+s=s$ (as $[z,[x, y]]=0$ for all $x, y, z \in \mathbb{H}$) $ \in \{\mathfrak{a}, \mathfrak{a}^{(2)}\}(=\mathfrak{a}^{3})$.

$(iii)$ $\{v, \{w, \{z, \{x, y\}\}\}\}=\{v, [w, s]+s\}$(by $(ii)$)$=[v, [w, s]]+s=[v, [w, s]]+[v, s]+s =0+[v, s]+s=[v, s]+s=ad_{s}(-v)+s=s$ (as $[z,[x, y]]=0$ for all $x, y, z \in \mathbb{H}$)  $\in \{\mathfrak{a}, \mathfrak{a}^{(3)}\}(=\mathfrak{a}^{(4)})$.

 \hspace{7cm}\vdots 

Since $x, y, z, w, v\in \mathbb{H}$ are all arbitrary and $E=\{s\}$, thus, the typical elements of $\mathfrak{a}^{(2)}, \mathfrak{a}^{(3)}, \mathfrak{a}^{(4)}$, $\ldots$ are all equal to the set $E$. Hence, there exist $n=2 \in \mathbb{N}$ such that $\mathfrak{a}^{(2+k)}=\mathfrak{a}^{(2)}=E,$ for all $k \in \mathbb{N}$. Thus, $\mathfrak{a}(\mathbb{H}; 0, 0, s) ~\text{with ad}_s=0$ is an example of affinely nilpotent Lie affgebra.
 \end{enumerate}

\subsection{AFFINE ADJOINT MAP}

\begin{definition}

 A \textbf{representation} of a Lie affgebra $\mathfrak{a}$ is a affine map $\phi$: $\mathfrak{a}\rightarrow Aff(\mathfrak{a})$, where $Aff(\mathfrak{a})$ is a Lie affgebra of all affine endomorphisms with the pointwise heap operation i.e., $<f, g, h>(a):=<f(a), g(a), h(a)>$, the action $(\alpha \triangleright_{f}g)(a):=\alpha\triangleright_{f(a)}g(a)$, and affine Lie bracket $\{f, g\}:=<fg, gf, g>,$ for all $\alpha \in \mathbb{K}, f, g \in Aff(\mathfrak{a}), a\in \mathfrak{a}$.
\end{definition}

Let us define a map $ad^{aff}:\mathfrak{a}\rightarrow Aff(\mathfrak{a})$ which sends $a \mapsto ad^{aff}_{a},$ where $ad^{aff}_{a}(b):=\{a, b\}$. Since $\{a,-\}$ is an affine map, thus, $ad^{aff}_{a}=\{a, -\}$ is also an affine map.
We show that the map $ad^{aff}_{a}$ can be written as sum of a linear map and a constant.\\

Therefore, $ad^{aff}_{a}(b)=\{a, b\}=[a, b]+\kappa(a)+\lambda(b-a)+s=[a, b]+\kappa(a)+\lambda(b)-\lambda(a)+s=ad_{a}(b)+\lambda(b)+(\kappa-\lambda)(a)+s=\underbrace{(ad_{a}+\lambda)}_{linear} (b)+\underbrace{(\kappa-\lambda)(a)+s}_{constant}.$

\begin{proposition}
    Let us consider the above map $ad^{aff}:\mathfrak{a}\rightarrow Aff(\mathfrak{a})$ which sends $a \mapsto ad^{aff}_{a},$ where $ad^{aff}_{a}(b):=\{a, b\}$. Then $ad^{aff}:\mathfrak{a}\rightarrow Aff(\mathfrak{a})$ is an affine map.
\end{proposition}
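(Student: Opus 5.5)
We have to show that $a\mapsto ad^{aff}_{a}=\{a,-\}$ is a morphism of affine spaces from $\mathfrak{a}$ to $Aff(\mathfrak{a})$. The structure on $Aff(\mathfrak{a})$ is pointwise, as in Lemma \ref{0}. So it suffices to check two things, each evaluated at an arbitrary $b\in\mathfrak{a}$:
\begin{enumerate}
\item heap compatibility: $ad^{aff}_{<a_1,a_2,a_3>}=<ad^{aff}_{a_1},ad^{aff}_{a_2},ad^{aff}_{a_3}>$;
\item action compatibility: $ad^{aff}_{\alpha\triangleleft_{a_1}a_2}=\alpha\triangleleft_{ad^{aff}_{a_1}}ad^{aff}_{a_2}$.
\end{enumerate}
I would first recall that $ad^{aff}_a$ does land in $Aff(\mathfrak{a})$, since $\{a,-\}$ is affine by axiom (1) of a Lie bracket.

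The key observation is that both identities, once evaluated at $b$, say exactly that $\{-,b\}$ is an affine map. Axiom (1) of the definition of a Lie bracket on $\mathfrak{a}$ gives this directly. For (1) we have
$$ad^{aff}_{<a_1,a_2,a_3>}(b)=\{<a_1,a_2,a_3>,b\}=<\{a_1,b\},\{a_2,b\},\{a_3,b\}>=<ad^{aff}_{a_1},ad^{aff}_{a_2},ad^{aff}_{a_3}>(b),$$
where the last step is the definition of the pointwise heap operation. For (2) we have
$$\{\alpha\triangleleft_{a_1}a_2,b\}=\alpha\triangleleft_{\{a_1,b\}}\{a_2,b\}=\bigl(\alpha\triangleleft_{ad^{aff}_{a_1}}ad^{aff}_{a_2}\bigr)(b),$$
where the last step is the definition of the pointwise action in Lemma \ref{0}. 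Since $b$ is arbitrary, both identities hold.

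As a consistency check in the model $\mathfrak{a}(\mathfrak{g};\kappa,\lambda,s)$, I would use the decomposition derived just above, $ad^{aff}_a(b)=(ad_a+\lambda)(b)+(\kappa-\lambda)(a)+s$. The linear part $ad_a+\lambda$ depends linearly on $a$, and the constant part $(\kappa-\lambda)(a)+s$ is affine in $a$. Affine combinations $a=\sum_i\mu_i a_i$ with $\sum_i\mu_i=1$ are therefore preserved, because the $s$ terms recombine using $\sum_i\mu_i=1$.

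The only point needing care is making sure the heap and action on $Aff(\mathfrak{a})$ are the pointwise ones and not defined through composition. Once that is fixed, the proof is immediate from left-affinity of the bracket, so I do not expect a genuine obstacle.
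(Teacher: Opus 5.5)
Your proposal is correct and is essentially the paper's proof: both evaluate $ad^{aff}_{<a,b,c>}$ and $ad^{aff}_{\alpha\triangleright_a b}$ at an arbitrary point $d$, then use that $\{-,d\}$ is affine (axiom (1) of the Lie bracket) together with the pointwise heap operation and pointwise action on $Aff(\mathfrak{a})$. Your extra consistency check in the model $\mathfrak{a}(\mathfrak{g};\kappa,\lambda,s)$ is not needed, but it is also correct.
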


\begin{proof}
For all $a, b, c \in \mathfrak{a}$ and $\alpha \in \mathbb{K}$, we only need to show that 
\begin{align}
  &ad^{aff}(<a, b, c>)=<ad^{aff}(a), ad^{aff}(b), ad^{aff}(c)>, \label{eq26} \\ 
 &ad^{aff}(\alpha \triangleright_{a}b)=\alpha \triangleleft_{ad^{aff}(a)}ad^{aff}(b).\label{eq27}
\end{align}

Left hand side of $\ref{eq26}=ad^{aff}_{<a, b, c>}$. Let $d \in \mathfrak{a}$ be arbitrary, then, 
$ad^{aff}_{<a, b, c>}(d)=~\{<a, b, c>, d\}=~<\{a, d\}, \{b, d\}, \{c, d\}>~ =~<ad^{aff}_{a}(d),ad^{aff}_{b}(d),ad^{aff}_{c}(d)>\\=~<ad^{aff}_{a}, ad^{aff}_{b}, ad^{aff}_{c}>(d)=<ad^{aff}(a), ad^{aff}(b), ad^{aff}(c)>(d)$. Since $d$ was arbitrary, hence, $ad^{aff}(<a, b, c>)~=~<ad^{aff}(a), ad^{aff}(b), ad^{aff}(c)>~=$ Right hand side of $\ref{eq26}$.

Left hand side of $\ref{eq27}=ad^{aff}_{\alpha \triangleright_{a}b}$. Let $e\in \mathfrak{a}$ be arbitrary. Then, $ad^{aff}_{\alpha \triangleright_{a}b}(e)=\{\alpha \triangleright_{a}b, e\}=\alpha \triangleright_{\{a, e\}}\{b, e\}= \alpha \triangleright_{ad^{aff}_{a}(e)}ad^{aff}_{b}(e)=(\alpha \triangleright_{ad^{aff}_{a}}ad^{aff}_{b})(e)=(\alpha \triangleright_{ad^{aff}(a)}ad^{aff}(b))(e)$. Since $e$ was arbitrary, hence, $ad^{aff}(\alpha \triangleright_{a}b)=\alpha_{ad^{aff}(a)}ad^{aff}(b)$. This completes the proof. \end{proof}
This $\text{ad}^{aff}$ is called the left adjoint affine map. Similarly, we can define the right adjoint affine map.\\
In the case of Lie algebra, derivation is equivalent to Leibniz identity. If we want to find the relationship between $ad^{aff}_{a}(\{x, y\})=\{a, \{x, y\}\}, \{ad^{aff}_{a}(x), y\}=\{\{a, x\}, y\}$ and $\{x, ad^{aff}_{a}(y)\}=\{x, \{a, y\}\}$, interestingly they satisfy Leibniz identity. For more details, see in \cite{Brz26}.


\section{ON ENGLE'S THEOREM FOR LIE AFFGEBRAS}
In this final section, we explore Engle's theorem for Lie affgebras of the form $\mathfrak{a}(\mathfrak{g}; 2\lambda,\lambda,s)$. 
\begin{definition}
    An element $a\in \mathfrak{a}(\mathfrak{g};\kappa, \lambda,s)$ is said to be affinely ad-nilpotent if there exists an natural number $n$ such that $(ad^{aff}_{a})^{n+k}(x)=(ad^{aff}_{a})^{n}(x)=E(\text{constant})$, for all $x\in \mathfrak{a}$ and for all $ k\in \mathbb{N}$. 
\end{definition}
\begin{proposition}
Let $\mathfrak{a}(\mathfrak{g}; \kappa, \lambda, s)$ is Lie affgebra which is affinely nilpotent, then all elements of $\mathfrak{a}(\mathfrak{g}; \kappa, \lambda, s)$ are affinely ad-nilpotent.
\end{proposition}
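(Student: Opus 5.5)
The plan is to read the iterates of $ad^{aff}_{a}$ directly as elements of the lower central series $(\mathfrak{a}^{(m)})$, and then use that the series stabilises at the singleton $E$. Let $\mathfrak{a}=\mathfrak{a}(\mathfrak{g};\kappa,\lambda,s)$ be affinely nilpotent of index $n$, so $\mathfrak{a}^{(n+k)}=\mathfrak{a}^{(n)}=E$ for all $k\in\mathbb{N}$. As shown before the definition of affine nilpotency, $E$ is either empty or a singleton, so here $E=\{e\}$.

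\textbf{First step (induction).} Fix $a\in\mathfrak{a}$. I will show by induction on $m\geq 1$ that
$$(ad^{aff}_{a})^{m}(x)=\{a,\{a,\ldots,\{a,x\}\cdots\}\}\in \mathfrak{a}^{(m)} \quad \text{for all } x\in\mathfrak{a}.$$
For $m=1$, $\{a,x\}\in\{\mathfrak{a},\mathfrak{a}\}=\mathfrak{a}^{(1)}$, since $\mathfrak{a}^{(1)}$ is by definition the smallest affine subspace containing all such brackets. For the inductive step, if $y=(ad^{aff}_{a})^{m-1}(x)\in\mathfrak{a}^{(m-1)}$, then $(ad^{aff}_{a})^{m}(x)=\{a,y\}$ with $a\in\mathfrak{a}$ and $y\in\mathfrak{a}^{(m-1)}$. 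Hence it lies in $\{\mathfrak{a},\mathfrak{a}^{(m-1)}\}=\mathfrak{a}^{(m)}$, again just because the product ideaf contains all brackets of its two factors.

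\textbf{Second step (conclusion).} Take $m=n$ and $m=n+k$. Then $(ad^{aff}_{a})^{n}(x)$ and $(ad^{aff}_{a})^{n+k}(x)$ both lie in $\mathfrak{a}^{(n)}=\mathfrak{a}^{(n+k)}=E=\{e\}$. So both equal $e$ for every $x\in\mathfrak{a}$ and every $k\in\mathbb{N}$. This is exactly the definition of $a$ being affinely ad-nilpotent, with the same $n$ and the constant $E$. Since $a$ was arbitrary, every element is affinely ad-nilpotent.

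\textbf{Expected obstacle and cross-check.} The only delicate point is that $E$ must be nonempty, so that the constant is actually attained. This holds because $\mathfrak{a}\neq\emptyset$: an affine space is a nonempty heap, so $\mathfrak{a}^{(n)}$ contains brackets and is nonempty, which forces $E=\{e\}$. As a sanity check against the paper's linear picture, one can also argue on the tangent side. By Theorem \ref{affgebra nilpotent to algebra nilpotent}, $\mathfrak{g}\simeq T_o\mathfrak{a}$ is nilpotent. The affine map $ad^{aff}_{a}=(ad_a+\lambda)(\cdot)+\text{const}$ then has $\hat{f}=ad_a+\lambda$, and Theorem \ref{NilpotentAffine1} translates affine nilpotency of $ad^{aff}_{a}$ into nilpotency of $ad_a+\lambda$. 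I will not rely on this route, since it would require controlling $\lambda$; the direct series argument above avoids that.
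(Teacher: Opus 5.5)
Your proposal is correct and follows the same route as the paper, which likewise identifies $(ad^{aff}_{a})^{m}(b)$ with the nested bracket $\{a,\{a,\ldots\{a,b\}\}\}$ and uses that the series $\mathfrak{a}^{(m)}$ stabilises at $E$. Your explicit induction $(ad^{aff}_{a})^{m}(x)\in\mathfrak{a}^{(m)}$ and your check that $E=\{e\}$ is nonempty make precise what the paper's sketch, with its unspecified ``some $t\geq n$'', leaves implicit.
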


\begin{proof}
   
Since $\mathfrak{a}(\mathfrak{g}; \kappa, \lambda, s)$ is affinely nilpotent Lie affgebra, then for the sequence of (product) ideafs $(\mathfrak{a}^{(m)})_{m \in \mathbb{Z}_{\geq 0}}$ of $\mathfrak{a}(\mathfrak{g}; \kappa, \lambda, s)$  defined by $\mathfrak{a}^{(0)}:=\mathfrak{a}$, $\mathfrak{a}^{(1)}:=\{\mathfrak{a}, \mathfrak{a}\}$, $\mathfrak{a}^{(2)}:=\{\mathfrak{a},\mathfrak{a}^{(1)}\}$, $\ldots,$ $\mathfrak{a}^{(m)}:=\{\mathfrak{a}, \mathfrak{a}^{(m-1)}\},\cdots$ there exists an integer $n$ such that $\mathfrak{a}^{(n+k)}=\mathfrak{a}^{(n)}=E$ for all $k\in \mathbb{N}$. 
Then, $(ad^{aff}_{a})^{2}(b)=(ad^{aff}_{a}\circ ad^{aff}_{a})(b)=\{a, \{a, b\}\}$. Similarly, $(ad^{aff}_{a})^{n}(b)=(ad^{aff}_{a}\circ \cdots \circ ad^{aff}_{a})(b)=\{a, \{a, \ldots \{a, b\}\}$, where $\{a, b\}=[a, b]+\kappa(a)+\lambda(b-a)+s$ and $\lambda([a, b])=[\lambda(a), b]+[a, \lambda(b)]-[a, \kappa(b)]$, for all $a, b \in \mathfrak{g}.$ For the some $t(\geq n)$ which also works for the Lie affgebra $\mathfrak{a}(\mathfrak{g}; \kappa, \lambda, s)$, $(ad^{aff}_{a})^{t}(b)=(ad^{aff}_{a})^{t+k}(b)=E$, for all $k\in \mathbb{N}$, for all $b \in \mathfrak{a}$. 
\end{proof}

\begin{example}
     Following \eqref{1.} of \eqref{oneside}, $\mathfrak{a}(\mathbb{H}; 0, 0, s)~ \text{with ad}_s=0$ is affinely nilpotent Lie affgebra of index $2$. Let us consider $ad^{aff}_{a}(b)=\{a, b\}$. We show that all elements of $\mathfrak{a}(\mathbb{H}; 0, 0, s)$ are affinely ad-nilpotent. Let $a\in \mathfrak{a}$. We only need to show that $ad^{aff}_{a}$ is affinely nilpotent. Thus, $(ad^{aff}_{a})^{2}(b)=\{a, \{a, b\}\}=\{a, [a, b]+s\}=[a, [a, b]]+[a, s]+s=0+[a, s]+s=s-ad_{s}(a)=s$, and $(ad^{aff}_{a})^{3}(b)=ad^{aff}_{a}([a,s]+s)=\{a, [a, s]+s\}=[a, [a, s]]+[a, s]+s=0+[a, s]+s=[a, s]+s=s-ad_{s}(a)=s$. In a similar fashion, it can be obtained that $(ad^{aff}_{a})^{2}(b)=(ad^{aff}_{a})^{2+k}(b)=E=\{s\}=constant$, for all $k\in \mathbb{N}$.
\end{example}
What about the converse? If all elements of $\mathfrak{a}(\mathfrak{g}; \kappa, \lambda, s)$ are affinely ad-nilpotent, does it imply that $\mathfrak{a}(\mathfrak{g}; \kappa, \lambda, s)$ is affinely nilpotent Lie affgebra? Note that the result is true for $\kappa=\lambda=0$ and $s=0$, known as Engel's theorem for Lie algebras. 
\begin{definition}
Let $\mathfrak{a}$ be any Lie affgebra and $o$ be any element in $\mathfrak{a}$. Then there exists $\kappa, \lambda, s$ such that $\mathfrak{a}=\mathfrak{a}(T_{o}\mathfrak{a};\kappa, \lambda, s)$. The dimension of the Lie affgebra $\mathfrak{a}$ means to be the dimension of $T_{o}\mathfrak{a}$.
\end{definition}
\begin{remark}
    The above definition of dimension of Lie affgebra makes sense since different choices of $o$ lead to isomorphic tangent spaces.
\end{remark}

Recall that elements of the quotient Lie affgebra $\mathfrak{a}/\mathfrak{Z(a)}$ is of the form $$\overline{a}:=\{<a, b', b>|\forall~ b\in \mathfrak{Z(a)}, \exists b'\in \mathfrak{Z(a)}\}, \text{where $a\in \mathfrak{a}$}.$$
Let us consider the following set $$\overline{E}:=\{\overline{x}\in \mathfrak{a}/\mathfrak{Z(a)}~|~ \{\overline{x}, \overline{a}\}=\{\overline{a}, \overline{x}\}=\overline{x}, \forall~ \overline{a}\in \mathfrak{a}/\mathfrak{Z(a)}\}.$$
\begin{proposition}\label{1}
    For any $x\in \mathfrak{a}$, $\overline{x}\in \overline{E}$ if and only if $x\in E.$
\end{proposition}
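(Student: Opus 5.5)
Both directions reduce to unwinding the definitions of the quotient bracket $\{\overline{x},\overline{a}\}=\overline{\{x,a\}}$ and of the classes modulo $\mathfrak{Z(a)}$. Throughout, I assume $\mathfrak{Z(a)}$ is an ideaf, as in the standing hypotheses of the earlier theorem ($\kappa=2\lambda$, $[x,\lambda(y)]=0$, $\text{ad}_s=0$), so that $\mathfrak{a}/\mathfrak{Z(a)}$ is a Lie affgebra.

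\emph{Easy direction.} Let $x\in E$ and $\overline{a}\in\mathfrak{a}/\mathfrak{Z(a)}$. Then
\[
\{\overline{x},\overline{a}\}=\overline{\{x,a\}}=\overline{x}, \qquad \{\overline{a},\overline{x}\}=\overline{\{a,x\}}=\overline{x},
\]
so $\overline{x}\in\overline{E}$. This step uses only that the quotient bracket is well defined, which was checked when quotient Lie affgebras were introduced.

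\emph{Converse.} Suppose $\overline{x}\in\overline{E}$. Then $\{x,a\}\sim_{\mathfrak{Z(a)}}x$ and $\{a,x\}\sim_{\mathfrak{Z(a)}}x$ for every $a$. By part (3) of the sub-heap relation proposition, this says that for every $z\in\mathfrak{Z(a)}$ both $<\{x,a\},x,z>$ and $<\{a,x\},x,z>$ lie in $\mathfrak{Z(a)}$. Work in the model $\mathfrak{a}=\mathfrak{a}(\mathfrak{g};2\lambda,\lambda,s)$. There $\mathfrak{Z(a)}$ is a coset $z_0+\mathfrak{h}$, where $\mathfrak{h}$ is contained in $Z(\mathfrak{g})$ by the proposition characterising the center. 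The conditions become
\[
\{x,a\}-x\in\mathfrak{h}, \qquad \{a,x\}-x\in\mathfrak{h} \qquad \text{for all } a.
\]
The plan is to upgrade these memberships to equalities. Write
\[
\{x,a\}-x=[x,a]+2\lambda(x)+\lambda(a-x)+s-x .
\]
Comparing $\{x,a\}$ with $\{a,x\}$ gives $\{x,a\}-\{a,x\}=2[x,a]$, since the $\kappa$ and $\lambda$ terms cancel when $\kappa=2\lambda$. This difference lies in $\mathfrak{h}\subseteq Z(\mathfrak{g})$. I would then use the affine antisymmetry identity $\{a,b\}=<\{a,a\},\{b,a\},\{b,b\}>$, evaluated at $a$ and at $x$, to express $\{x,a\}-x$ through $\{x,x\}-x$ and $\{a,a\}$-type terms, and reduce everything to the single element $\{x,x\}-x$ together with the linear map $a\mapsto(\lambda+\text{ad}_x)(a)$. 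The aim is to show that this linear map and the constant $\lambda(x)+s-x$ both vanish, which is exactly $x\in E$.

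\emph{Main obstacle.} The converse genuinely needs to pass from ``equal modulo the center'' to ``equal''. A quotient generally has fixed points that do not lift, so the statement is false in general and some extra structure is required. My first attempt would exploit that $\mathfrak{Z(a)}$ is a class in the quotient, so $\overline{E}$ contains $\overline{z_0}$. Since $E$ is a singleton when nonempty and $E\subseteq\mathfrak{Z(a)}$, I would then identify $\overline{x}$ with the class of the unique element of $E$, and show that $x$ lies in that class using the uniqueness argument for $E$ recalled earlier: if two elements are both fixed by all brackets, they coincide. If this fails, I would restrict to the case $\text{ad}_s=0$, $[\mathfrak{g},\lambda(\mathfrak{g})]=0$ and verify the equalities directly from the explicit bracket formula. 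In that setting $\{x,a\}$ simplifies to $[x,a]+\lambda(x)+\lambda(a)+s$, which makes the final comparison tractable.
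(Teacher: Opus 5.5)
\textbf{The converse direction is false, so no argument can close it.} Your easy direction ($x\in E\Rightarrow\overline{x}\in\overline{E}$) is correct and is exactly what the paper does. Your own observation that $\overline{z_0}\in\overline{E}$ already refutes the converse. If $\mathfrak{Z(a)}$ is a nonempty ideaf, then for every $z\in\mathfrak{Z(a)}$ and $a\in\mathfrak{a}$ both $\{z,a\}$ and $\{a,z\}$ lie in $\mathfrak{Z(a)}=\overline{z}$, so $\overline{z}\in\overline{E}$. Since $E$ has at most one element, the ``only if'' direction would force $\mathfrak{Z(a)}=E$, a single point.

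Concretely, take the paper's own example $\mathfrak{a}(\mathbb{H};0,0,s)$ with $\text{ad}_s=0$, which satisfies all the standing hypotheses. There $\{x,y\}=[x,y]+s$, $\mathfrak{Z(a)}=Z(\mathbb{H})$ and $E=\{s\}$. Every $x\in Z(\mathbb{H})$ satisfies $\{\overline{x},\overline{a}\}=\overline{s}=\overline{x}=\{\overline{a},\overline{x}\}$, yet $\{x,0\}=s\neq x$ unless $x=s$.

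This is why your first strategy stalls. Identifying $\overline{x}$ with the class of the unique $e\in E$ only gives $x\in\overline{e}=\mathfrak{Z(a)}$. The uniqueness argument for $E$ applies only to elements satisfying $\{x,a\}=\{a,x\}=x$ exactly, so it cannot force $x=e$. It also presupposes $E\neq\emptyset$. That fails for any nonzero $\lambda$ with $\lambda(\mathfrak{g})\subseteq Z(\mathfrak{g})$ and $\lambda([\mathfrak{g},\mathfrak{g}])=0$, even though $\overline{0}\in\overline{E}$ there.

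Your fallback computation, carried through, refutes the statement rather than proving it. Assume $\kappa=2\lambda$, $[\mathfrak{g},\lambda(\mathfrak{g})]=0$ and $\text{ad}_s=0$. Then $\lambda(\mathfrak{g})\subseteq Z(\mathfrak{g})=\mathfrak{Z(a)}$ and $s\in Z(\mathfrak{g})$, so $\{x,a\}-x\equiv[x,a]-x$ modulo $Z(\mathfrak{g})$. Setting $a=0$ shows $\overline{x}\in\overline{E}\iff x\in\mathfrak{Z(a)}$. On the other hand, add $\{x,a\}=x$ and $\{a,x\}=x$ and compare with the case $a=0$. This gives $2\lambda(a)=0$ for all $a$, so $E=\{s\}$ if $\lambda=0$ and $E=\emptyset$ otherwise.

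Hence, under the standing hypotheses, the proposition holds exactly when $Z(\mathfrak{g})=0$. It fails for every nonzero nilpotent $\mathfrak{g}$, which is precisely where Lemma~\ref{lemma 9.2} and the Engel-type theorem invoke it.

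The paper's proof breaks at the same point you flagged as the main obstacle. From $<\{x,y\},s_5,s_6>\in\overline{x}$ it concludes $\{x,y\}=x$ ``by definition of $\overline{x}$.'' That membership only says $\{x,y\}\sim_{\mathfrak{Z(a)}}x$. So the only valid content of the statement is the implication $x\in E\Rightarrow\overline{x}\in\overline{E}$. Rather than looking for extra structure to rescue the converse, you should have turned your observation $\overline{z_0}\in\overline{E}$ into a counterexample.
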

\begin{proof}
    Let $\overline{x}\in \overline{E}$, then $\{\overline{x}, \overline{y}\}=\{\overline{y}, \overline{x}\}=\overline{x},  \forall~ \overline{y} \in \mathfrak{a}/\mathfrak{Z(a)}$. Observe that
    \begin{align*}
        &\{\overline{x}, \overline{y}\}=\overline{x},\\
        & \{<x, s_1, s_2>, <y, s_3, s_4>\}\in \overline{x}, \forall~ s_2, s_4 \in \mathfrak{Z(a)}, \exists s_1, s_3 \in \mathfrak{Z(a)},\\
        & <\{x, y\}, s_5, s_6> \in \overline{x}, \text{for some $s_5, s_6 \in \mathfrak{Z(a)}.$}
    \end{align*}
    Then by definition of $\overline{x}$, we get $\{x, y\}=x$, for all $y\in \mathfrak{a}$. Similarly, we can obtain $\{y, x\}=x$, for all $y\in\mathfrak{a}.$ Hence, $x\in E$.\\
    Let $x\in E.$ Then $\{x, y\}=\{y, x\}=x, \forall~ y \in \mathfrak{a}$. Since $\overline{\{x, y\}}=\{\overline{x}, \overline{y}\}$, thus $\overline{\{x, y\}}=\{\overline{x}, \overline{y}\}=\overline{\{y, x\}}=\{\overline{y}, \overline{x}\}=\overline{x}.$ Hence, $\overline{x}\in \overline{E}.$
\end{proof}
\begin{proposition}\label{2}
    $E\subset \mathfrak{a}^{(n)}, \forall~ n\geq 1.$
\end{proposition}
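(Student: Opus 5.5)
The plan is to argue by induction on $n$, using only the definition of $E$ and of the product ideafs $\mathfrak{a}^{(m)}=\{\mathfrak{a},\mathfrak{a}^{(m-1)}\}$. If $E=\emptyset$ there is nothing to prove. So assume $E\neq\emptyset$; by the earlier observation $E=\{e\}$ is a singleton. It then suffices to show $e\in\mathfrak{a}^{(n)}$ for every $n\geq 1$.

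For the base case $n=1$, recall that $\mathfrak{a}^{(1)}=\{\mathfrak{a},\mathfrak{a}\}$ is the smallest affine subspace containing all brackets $\{x,y\}$ with $x,y\in\mathfrak{a}$. Taking $x=y=e$, the defining property of $E$ gives $\{e,e\}=e$. Hence $e\in\mathfrak{a}^{(1)}$. Alternatively, one may take $x=e$ and any $y$, since $\{e,y\}=e$.

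For the inductive step, suppose $e\in\mathfrak{a}^{(n-1)}$ with $n\geq 2$. By definition, $\mathfrak{a}^{(n)}=\{\mathfrak{a},\mathfrak{a}^{(n-1)}\}$ contains every bracket $\{x,y\}$ with $x\in\mathfrak{a}$ and $y\in\mathfrak{a}^{(n-1)}$. Choose $x$ arbitrary in $\mathfrak{a}$ (nonempty, being an affine space) and $y=e$. Since $e\in E$, we have $\{x,e\}=e$, so $e\in\mathfrak{a}^{(n)}$. This completes the induction, and therefore $E\subseteq\mathfrak{a}^{(n)}$ for all $n\geq1$.

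The only delicate point is bookkeeping rather than mathematics. One must make sure the product $\{\mathfrak{b},\mathfrak{c}\}$ is read as the affine span of brackets, which in particular contains each individual bracket, and not as some quotient or translate. One must also handle the empty case of $E$ separately. I would also remark that the inclusion sign in the statement should be read as $\subseteq$, since in the affinely nilpotent case $E=\mathfrak{a}^{(n)}$ for large $n$, so strict inclusion cannot hold in general.
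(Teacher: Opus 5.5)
Your proof is correct and is essentially the paper's argument. The paper notes $\{x,x\}=x$ for $x\in E$ and writes $x$ as the $n$-fold nested bracket $\{x,\{x,\ldots\{x,x\}\}\}\in\mathfrak{a}^{(n)}$; your induction (base case $\{e,e\}=e$, step $\{y,e\}=e$ with $e\in\mathfrak{a}^{(n-1)}$) just makes that nesting explicit, and your treatment of the empty case and of the inclusion as $\subseteq$ are harmless refinements.
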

\begin{proof}
    Let $x\in E$, then $\{x, y\}=\{y, x\}=x, \forall~ y\in \mathfrak{a}$. In particular, we may choose $y=x$, thus $\{x,x\}=x.$ Then $$x=\{x,\{x,\{x,\{\cdots x\underbrace{ \}\}\cdots \}}_{n\text{-many}}\in \mathfrak{a}^{(n)}$$. Thus, we are done.
\end{proof}

Henceforth, we assume that $\kappa=2\lambda$ and $[x, \lambda(y)]=0,~ \text{for all}~ x, y\in \mathfrak{a},$ and $\text{ad}_s=0$ for a Lie affgebra $\mathfrak{a}(\mathfrak{g}; \kappa, \lambda, s)$.
\begin{lemma}\label{lemma 9.2}
    Let $\mathfrak{a}$ be a Lie affgebra and $\mathfrak{Z(a)}$ be the center of the Lie affgebra $\mathfrak{a}$. If $\mathfrak{a/Z(a)}$ is an affinely nilpotent Lie affgebra, then $\mathfrak{a}$ is also an affinely nilpotent Lie affgebra.
\end{lemma}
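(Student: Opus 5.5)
Following the classical argument (if $\mathfrak{g}/Z(\mathfrak{g})$ is nilpotent then so is $\mathfrak{g}$), I propose to push the terminal stage of the lower central series of $\mathfrak{a/Z(a)}$ back to $\mathfrak{a}$ and then go one step further.

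Throughout I use the standing hypotheses $\kappa=2\lambda$, $[x,\lambda(y)]=0$ for all $x,y$, and $\text{ad}_s=0$. Under these, the earlier Theorem shows that $\mathfrak{Z(a)}$ is an ideaf, so $\mathfrak{a/Z(a)}$ is a Lie affgebra with bracket $\{\overline{a_1},\overline{a_2}\}=\overline{\{a_1,a_2\}}$.

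\textbf{Step 1: comparing the two series.} I first show by induction on $m$ that
\[
(\mathfrak{a/Z(a)})^{(m)}=\{\overline{x} \mid x\in\mathfrak{a}^{(m)}\}.
\]
This holds because the bracket on the quotient is computed on representatives, and the quotient map commutes with heap operations and affine actions. It therefore sends the smallest affine subspace spanned by brackets onto the corresponding smallest affine subspace in the quotient.

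\textbf{Step 2: reaching the center.} Suppose $(\mathfrak{a/Z(a)})^{(n)}=\overline{E}$ for all larger indices. By Proposition \ref{1}, $\overline{x}\in\overline{E}$ iff $x\in E$, and $E\subseteq\mathfrak{Z(a)}$ by the Remark. Combined with Step 1, every $x\in\mathfrak{a}^{(n)}$ has its class in $\overline{E}$. Since the class of any element of $\mathfrak{Z(a)}$ is $\mathfrak{Z(a)}$ itself, this gives $\mathfrak{a}^{(n)}\subseteq\mathfrak{Z(a)}$.

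\textbf{Step 3: one more bracket.} I then show $\mathfrak{a}^{(n+1)}=\{\mathfrak{a},\mathfrak{a}^{(n)}\}=E$. Concretely, take $z\in\mathfrak{Z(a)}$. The centre characterization, valid for $\kappa=2\lambda$, gives $z\in Z(\mathfrak{g})$. Then
\[
\{a,z\}=[a,z]+\lambda(a)+\lambda(z)+s=\lambda(a)+\lambda(z)+s .
\]
I will use $[x,\lambda(y)]=0$ and $\text{ad}_s=0$ to verify the following:
\begin{itemize}
\item each such bracket lies in the center again;
\item iterating once more produces the fixed point $e$ with $\{e,y\}=\{y,e\}=e$ for all $y$.
\end{itemize}
Together with Proposition \ref{2}, $E\subseteq\mathfrak{a}^{(m)}$ for all $m$, this gives equality $\mathfrak{a}^{(n+1+k)}=E$, and hence affine nilpotency of index at most $n+1$.

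\textbf{Main obstacle.} I expect Step 3 to be the hardest part. The issue is that $\lambda(a)$ need not vanish, so $\{a,z\}$ depends on $a$, and the set of such brackets is an affine subspace rather than a single point. To control this I would first show that $\lambda$ is nilpotent. If $\overline{E}\neq\emptyset$ forces $E\neq\emptyset$, then the fixed point $e\in E$ satisfies $\{a,e\}=e$ for all $a$, which yields $\lambda(a)=0$ for all $a$ after subtracting the identity at $a=e$. I would try to derive this existence from Step 2 first: take $x\in\mathfrak{a}^{(n)}$ with $\overline{x}\in\overline{E}$, and apply Proposition \ref{1} directly to get $x\in E$. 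If $\lambda=0$, the bracket of anything with a central element is the constant $s$, and $s\in E$ exactly as in the previous theorem, which finishes the argument.
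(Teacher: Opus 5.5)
Your argument depends on the implication $\overline{x}\in\overline{E}\Rightarrow x\in E$, the forward half of the Proposition comparing $\overline{E}$ with $E$. You use it in Step~2, and decisively in your last paragraph to produce an element of $E$ and hence $\lambda=0$. The paper's proof rests on the same implication: it applies it to every $x\in\mathfrak{a}^{(n)}$ to get $\mathfrak{a}^{(n)}\subseteq E$, hence $\mathfrak{a}^{(n)}=E$ at the same index $n$. If that implication held, your Step~3 would be unnecessary. But it is false. The equation $\{\overline{x},\overline{y}\}=\overline{x}$ only says that $\{x,y\}$ and $x$ lie in the same class, i.e.\ $\{x,y\}-x\in Z(\mathfrak{g})$, not that they coincide. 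Under the standing hypotheses, $\lambda(\mathfrak{g})\subseteq Z(\mathfrak{g})$ and $s\in Z(\mathfrak{g})$, so $\{\overline{x},\overline{y}\}=\overline{[x,y]}$. Thus the quotient is just the Lie algebra $\mathfrak{g}/Z(\mathfrak{g})$ and retains no information about $\lambda$. This observation does correctly justify your Step~2 conclusion: $\overline{E}=\{\overline{0}\}$, so $\mathfrak{a}^{(n)}\subseteq Z(\mathfrak{g})=\mathfrak{Z}(\mathfrak{a})$. It also refutes the Proposition: in $\mathfrak{a}(\mathbb{H};0,0,0)$ every $x\in Z(\mathbb{H})$ has $\overline{x}=\overline{0}\in\overline{E}$, while $E=\{0\}$.

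Your own Step~3 computation shows the obstruction cannot be removed. For $z\in Z(\mathfrak{g})$ one has $\{a,z\}=\lambda(a)+\lambda(z)+s$. Hence every $\mathfrak{a}^{(m)}$ with $m>n$ contains a translate of $\lambda(\mathfrak{g})$, and is a single point only if $\lambda=0$; nothing in the hypotheses forces this. Concretely, take $\mathfrak{g}=\mathbb{K}$ (abelian), $\lambda=\mathrm{id}$, $\kappa=2\,\mathrm{id}$, $s=0$, so that $\{a,b\}=a+b$. All standing hypotheses hold, and $\mathfrak{Z}(\mathfrak{a})=\mathfrak{a}$, so $\mathfrak{a}/\mathfrak{Z}(\mathfrak{a})$ is a one-point, hence affinely nilpotent, Lie affgebra. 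Yet $E=\emptyset$ and $\mathfrak{a}^{(m)}=\mathbb{K}$ for all $m\geq 1$, so $\mathfrak{a}$ is not affinely nilpotent. The lemma therefore needs an additional hypothesis such as $\lambda=0$; under the standing hypotheses this is equivalent to $E\neq\emptyset$. With that hypothesis, your Steps~1--3 (Step~2 justified as above) give a correct proof, with index $n+1$ rather than $n$. The shift is genuine: for $\mathfrak{a}(\mathbb{H};0,0,0)$ the quotient has index $1$, while $\mathfrak{a}^{(1)}=Z(\mathbb{H})\neq E$.
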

\begin{proof}
    Since $\mathfrak{a/Z(a)}$ is affinely nilpotent Lie affgebra, then there exists $n$ such that $(\mathfrak{a/Z(a)})^{(n)}=(\mathfrak{a/Z(a)})^{(n+k)}=E$, $\forall k\in \mathbb{N}$. We prove the elements of $(\mathfrak{a/Z(a)})^{n}$ can be thought of as elements of $\mathfrak{a^{(n)}/Z(a)}.$ We start with any element $\overline{a_{i}}=\{<a_{i}, z_{i}, z'_{i}>| \forall z'_{i}\in \mathfrak{Z(a)}, \exists z_i \in \mathfrak{Z(a)}\}$.\\
    Consider $\{\overline{a_{1}},\{\overline{a_{2}},\overline{a_{3}}\}\}$ and start by doing for representative 
    \begin{align*}
    &\{[a_{1}, z_1, z'_1], \{[a_2, z_2, z'_2], [a_3, z_3,z'_3]\}\}=\{[a_{1}, z_1, z'_1],[\{a_2,[a_3, z_3, z'_3]\},\underbrace{ \{z_2, [a_3, z_3, z'_3]\}}_{z_4 \in \mathfrak{Z(a)}}, \underbrace{\{z'_2, [a_3, z_3, z_3']\}}_{z_5 \in \mathfrak{Z(a)}}\}\tag*{}\\
    =&\{[a_1, z_1, z'_1], [\{a_2, [a_3, z_3, z'_3]\}, z_4, z_5]\}\tag*{}\\
    =&\{[a_1, z_1, z'_1],[\{a_2, a_3\}, \underbrace{ \{a_2, z_3\}}_{z_6 \in \mathfrak{Z(a)}}, \underbrace{\{a_2, z'_3\}}_{z_7 \in \mathfrak{Z(a)}}], z_4, z_5\}\tag*{}\\
    =&\{[a_1, z_1, z'_1],[\{a_2, a_3\}, z_6, z_7], z_4, z_5\}\tag*{}\\
    =&\{[a_1, z_1, z'_1],[\{a_2, a_3\}, z_6, z_7], z_4, z_5\}\tag*{}\\
    =&\{[a_1, z_1, z'_1],[\{a_2, a_3\}, z_6, z_8]\}, \text{where $z_8=[z_7, z_4, z_5]$}\tag*{}\\
    =&[[\{a_1,\{a_2, a_3\}\}, \underbrace{\{a_1, z_6\}}_{z_9}, \underbrace{\{a_1, z_8\}}_{z_{10}}], \underbrace{[\{z_1, \{a_2, a_3\}\},\{z_1, z_6\}, \{z_1, z_8\}]}_{z_{11}}, \underbrace{[\{z'_1, \{a_2, a_3\}\}, \{z'_1, z_6\}, \{z'_1, z_8\}]}_{z_{12}}]\tag*{}\\
    =&[\{a_1,\{a_2, a_3\}\}, z, z'] \text{ for some $ z,z'\in \mathfrak{z(a)}$(on simplification).}\tag*{}
\end{align*} 
Similarly, we have $\{\overline{a_{1}},\{\overline{a_{2}},\{\cdots,\overline{a_{n}}\underbrace{\}\}\}\cdots\}}_{n~\text{terms}}=[\{a_{1},\{a_{2}\{\cdots a_{n}\underbrace{\}\}\}\cdots\}}_{n~\text{terms}},z',z'']$ for some $z',z''\in \mathfrak{z(a)}$ is an element of $\mathfrak{a^{n}/Z(a)}$. That is;
\begin{equation*}
(\mathfrak{a}/\mathfrak{Z(a)})^{(n)}=\mathfrak{a}^{(n)}/ \mathfrak{Z(a)} \quad \text{using the fact that the center $\mathfrak{z(a)}$, is an ideaf}.   
\end{equation*}
     
     Using the Proposition (\ref{1}) and $\mathfrak{a}^{(n)}\subset \mathfrak{a}$, we obtain, for any $x\in\mathfrak{a}^{(n)}\subset \mathfrak{a}$ $\text{and}$ $\overline{x}\in \overline{E},$ $\text{thus}$ $ x\in E.$ Hence, $\mathfrak{a}^{(n)}\subset E$ and by the Proposition (\ref{2}), we get $\mathfrak{a}^{(n)}=E.$ Consider $\mathfrak{a}^{(n+1)}=\{\mathfrak{a}, \mathfrak{a}^{(n)}\}=\{\mathfrak{a}, E\}=E$, i.e., $\mathfrak{a}^{(n+1)}=\mathfrak{a}^{(n)}=E.$ Then for all $k\geq 1$, we get $\mathfrak{a}^{(n+k)}=\mathfrak{a}^{(n)}=E.$ Thus, $\mathfrak{a}$ is affinely nilpotent Lie affgebra.
\end{proof}
\begin{lemma}\label{lemma 9.3}
    Let $\mathfrak{b}$ be a subset of $\text{aff}(\mathfrak{a})$ and $o\in \mathfrak{a}$ be any element. Suppose that $T_{o}\mathfrak{a}(\neq \lbrace o \rbrace)$ is a finite dimensional vector space with origin $o$. If $\mathfrak{b}$ consists of affinely nilpotent endomorphisms of $\mathfrak{a}$, then there exists an element $v(\neq o)\in T_{o}\mathfrak{a}$ such that $\mathfrak{b}.v=\{f(o)~|~f\in \text{aff}(\mathfrak{a})\}.$
\end{lemma}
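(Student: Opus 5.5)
The plan is to pass from affine maps to their linear parts, where the question becomes the classical one behind Engel's theorem. I read $\mathfrak{b}.v$ as the set of values $\{f(v)\mid f\in\mathfrak{b}\}$. I read the required equality as the pointwise identity $f(v)=f(o)$ for every $f\in\mathfrak{b}$, where each $f$ is viewed as an element of $\text{aff}(\mathfrak{a})$ evaluated at the base point. This is the form needed later in the inductive step of the Engel-type theorem. The literal set $\{f(o)\mid f\in\text{aff}(\mathfrak{a})\}$ is all of $\mathfrak{a}$, because constant maps are affine, so it cannot be what is intended.

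First, fix $o$ and write each $f\in\mathfrak{b}$ as $f(x)=\hat f(x)+f(o)$ with $\hat f$ linear on $T_o\mathfrak{a}$, exactly as in the proof of Theorem \ref{NilpotentAffine1}. By that theorem, $f$ is affinely nilpotent if and only if $\hat f$ is a nilpotent linear endomorphism of $T_o\mathfrak{a}$. Hence $f(v)=f(o)$ is equivalent to $\hat f(v)=o$. So the statement reduces to finding a nonzero $v\in T_o\mathfrak{a}$ with $\hat f(v)=o$ for all $f\in\mathfrak{b}$, that is, a common null vector of $\hat{\mathfrak{b}}=\{\hat f\mid f\in\mathfrak{b}\}\subseteq\mathfrak{gl}(T_o\mathfrak{a})$.

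Second, I would obtain this common null vector from the classical linear Engel lemma \cite{Hum}: a Lie subalgebra of $\mathfrak{gl}(V)$, with $V\neq 0$ finite dimensional and every element nilpotent, has a common null vector. The hypotheses $T_o\mathfrak{a}\neq\{o\}$ and finite dimensionality are exactly what this lemma needs. The argument goes by induction on $\dim L$, where $L$ is the linear span of $\hat{\mathfrak{b}}$ together with its iterated commutators. One takes a maximal proper subalgebra $K$ of $L$ and uses the nilpotency of $\text{ad}_{\hat f}$ on $\mathfrak{gl}(V)$ to show that $K$ is an ideal of codimension one. The common null space of $K$ is then stable under an element of $L\setminus K$, and that element has a null vector there.

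The main obstacle is the second step as the statement is worded. For an \emph{arbitrary} subset $\mathfrak{b}$ the conclusion fails. For example, the affine maps whose linear parts are the elementary matrices $E_{12}$ and $E_{21}$ on $\mathbb{K}^2$ are each affinely nilpotent but have no common null vector. So the Lie-algebra generated by $\hat{\mathfrak{b}}$ must consist of nilpotent maps, and one cannot avoid using something about how $\mathfrak{b}$ arises. I would first try to show that the linear span of $\hat{\mathfrak{b}}$ is closed under commutators. In the case where the lemma is used, namely $\mathfrak{b}=ad^{aff}(\mathfrak{a})$, each linear part is $\hat f=ad_a+\lambda$. Under the standing assumptions $\kappa=2\lambda$, $[x,\lambda(y)]=0$ and $\text{ad}_s=0$, together with the compatibility identity (\ref{eq}), the commutator $[ad_a+\lambda,ad_b+\lambda]$ should reduce to $ad_{[a,b]}$ plus a $\lambda$-correction that again lies in the span. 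Once closure under commutators is established, the linear Engel lemma applies verbatim. Translating back via $f(v)=\hat f(v)+f(o)=f(o)$ then gives the asserted identity.
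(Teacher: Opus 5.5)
Your reduction is the same as the paper's: pass to linear parts $\hat f$ via Theorem \ref{NilpotentAffine1}, then invoke Humphreys' Theorem 3.3. Your pointwise reading $f(v)=f(o)$ for $f\in\mathfrak b$ is also how the paper's proof actually ends, with $f\in\mathfrak b$ rather than $f\in\text{aff}(\mathfrak a)$. The real difference is the step you flag. The paper applies Theorem 3.3 directly to the bare set $L=\{\hat f\mid f\in\mathfrak b\}$. That theorem, however, requires $L$ to be a Lie subalgebra of $\mathfrak{gl}(T_o\mathfrak a)$ all of whose elements are nilpotent. Your $E_{12},E_{21}$ example is a valid counterexample; realise it, say, on the abelian Lie affgebra $\mathfrak a(\mathbb K^2;0,0,0)$. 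So the lemma as worded is false, and the paper's proof has exactly the gap you identify. Adding a hypothesis, as you do, is the only way forward.

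One piece of your repair is still missing. Closure of $\mathrm{span}\,\hat{\mathfrak b}$ under commutators does not by itself let the linear Engel lemma apply ``verbatim''. That lemma needs every element of the Lie algebra to be nilpotent, and a span of nilpotent maps need not be (already $E_{12}+E_{21}$ is not). For $\mathfrak b=ad^{aff}(\mathfrak a)$, both points can be settled from the standing assumptions:
\begin{itemize}
\item Since $\kappa=2\lambda$, identity (\ref{eq}) reads $\lambda([a,x])=[\lambda(a),x]-[a,\lambda(x)]$. This vanishes because $[x,\lambda(y)]=0$.
\item Hence $[ad_a,\lambda]=0$ and $[ad_a+\lambda,ad_b+\lambda]=ad_{[a,b]}$. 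So your $\lambda$-correction is in fact zero, and $\mathrm{span}\,\hat{\mathfrak b}=ad(\mathfrak g)+\mathbb K\lambda$ is a Lie subalgebra.
\item $\lambda$ is the linear part of $ad^{aff}_0$, hence nilpotent. So $ad_a=(ad_a+\lambda)-\lambda$ is a difference of commuting nilpotent maps and is nilpotent. Then so is every $ad_a+t\lambda$.
\end{itemize}
With this added, Humphreys' theorem applies to $ad(\mathfrak g)+\mathbb K\lambda$. Your argument then proves the version of the lemma that the Engel-type theorem actually uses.
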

\begin{proof}
    Let $f\in \mathfrak{b}$ implies $f\in aff(\mathfrak{a})$, then there exists a linear map $\hat{f}:T_{o}\mathfrak{a}\rightarrow T_{a}\mathfrak{a}$ such that $\hat{f}(a)=f(a)-f(o)$ if and only if $f(a)=\hat{f}(a)+f(o)$. Thus, by the Theorem \eqref{NilpotentAffine1}, $\{\hat{f}\}_{f\in \mathfrak{b}}$ is a collection of nilpotent linear operators on $T_{o}\mathfrak{a}$, say it $L$, i.e. $L:=\{\hat{f}:T_o\mathfrak{a}\rightarrow T_{o}\mathfrak{a} ~|~ \hat{f}$ is a nilpotent linear map associated to $f\in \mathfrak{b}\}.$ Now, by Theorem $3.3$ in $\cite{Hum}$, then there exists $v(\neq o)\in T_{o}\mathfrak{a}$ such that $L.v=o$ i.e. $\hat{f}(v)=o$, $\forall \hat{f}\in L$. Thus, we obtained  $f(v)=f(o)$. Hence, $\mathfrak{b}.v=\{ f(o)~|~ f\in \mathfrak{b}\}.$
\end{proof}
\begin{theorem}
    If all elements of $\mathfrak{a}(\mathfrak{g}; \kappa=2\lambda, \lambda, s)$ are affinely ad-nilpotent, then $\mathfrak{a}(\mathfrak{g}; \kappa=2\lambda, \lambda, s)$ is affinely nilpotent Lie affgebra.
\end{theorem}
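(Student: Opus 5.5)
The plan is to imitate the classical proof of Engel's theorem: induct on $\dim\mathfrak{a}=\dim T_o\mathfrak{a}$, using Lemma \ref{lemma 9.3} to produce a nonzero vector killed by all the linear parts, and then Lemma \ref{lemma 9.2} to pass from $\mathfrak{a}/\mathfrak{Z(a)}$ back to $\mathfrak{a}$. Throughout we work under the standing hypotheses $\kappa=2\lambda$, $[x,\lambda(y)]=0$ and $\text{ad}_s=0$, so that $\mathfrak{Z(a)}$ is an ideaf and the quotient $\mathfrak{a}/\mathfrak{Z(a)}$ is again a Lie affgebra of the same type.

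First we translate the hypothesis into linear terms. For each $a\in\mathfrak{a}$, $ad^{aff}_a=(ad_a+\lambda)+\big((\kappa-\lambda)(a)+s\big)$. Since $a$ is affinely ad-nilpotent, Theorem \ref{NilpotentAffine1} shows that the linear part $ad_a+\lambda$ is a nilpotent operator on $T_o\mathfrak{a}\cong\mathfrak{g}$. We then apply Lemma \ref{lemma 9.3} to $\mathfrak{b}=\{ad^{aff}_a\mid a\in\mathfrak{a}\}$. This gives $v\neq o$ with $(ad_a+\lambda)(v)=0$ for all $a$, that is, $\{a,v\}=\{a,o\}$ for all $a$.

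Next we show that $\mathfrak{Z(a)}$ is nontrivial in the sense that $Z(\mathfrak{g})\neq 0$. By the earlier characterization of the center (valid when $\kappa=2\lambda$), $b\in\mathfrak{Z(a)}$ if and only if $b\in Z(\mathfrak{g})$. Taking $a=0$ gives $\lambda(v)=0$, and then $ad_a(v)=0$ for all $a$, so $v\in Z(\mathfrak{g})$. Hence the quotient has $\dim T_{\bar o}(\mathfrak{a}/\mathfrak{Z(a)})=\dim\mathfrak{g}/Z(\mathfrak{g})<\dim\mathfrak{g}$.

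We then check that the hypotheses descend to the quotient. Every element $\overline{a}$ of $\mathfrak{a}/\mathfrak{Z(a)}$ is affinely ad-nilpotent, because $(ad^{aff}_{\overline a})^n(\overline x)=\overline{(ad^{aff}_a)^n(x)}$ and the right-hand side is constant. The quotient also keeps the form $\mathfrak{a}(\mathfrak{g}/Z(\mathfrak{g});2\bar\lambda,\bar\lambda,\bar s)$ with the standing identities. This uses that $\lambda(\mathfrak{g})\subseteq Z(\mathfrak{g})$, which follows from $[x,\lambda(y)]=0$, so $\lambda$ preserves $Z(\mathfrak{g})$. By the induction hypothesis, $\mathfrak{a}/\mathfrak{Z(a)}$ is affinely nilpotent, and Lemma \ref{lemma 9.2} then gives that $\mathfrak{a}$ is affinely nilpotent. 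The base case $\dim\mathfrak{g}=0$ is trivial, since then $\mathfrak{a}$ is a single point, which lies in $E$.

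The main obstacle is the quotient step. We must verify that $\mathfrak{a}/\mathfrak{Z(a)}$ really satisfies the standing assumptions, in particular $\overline{[x,\lambda(y)]}=0$ and $\text{ad}_{\bar s}=0$; both hold trivially because these brackets already vanish in $\mathfrak{g}$. We must also make sure the set $E$ is nonempty where needed: $s\in E$ requires $\lambda(s)$ terms to vanish, and we would check this using $\{x,s\}=[x,s]+\lambda(x)+\lambda(s)+s$ together with the identity \eqref{eq}. If $E$ fails to be nonempty in general, we would instead read the conclusion of Lemma \ref{lemma 9.2} as stabilization of $\mathfrak{a}^{(m)}$ at a single point.
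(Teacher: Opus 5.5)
Your outline is the paper's own argument: Lemma~\ref{lemma 9.3}, a nonzero central vector, the quotient by $\mathfrak{Z}(\mathfrak{a})$, induction on dimension, then Lemma~\ref{lemma 9.2}. Your way of getting $v\in Z(\mathfrak{g})$ from a single $v$ (put $a=0$ to get $\lambda(v)=0$) is cleaner than the paper's, which tacitly identifies the vectors produced for the left and right adjoints. But the step you flagged is a genuine gap, and it cannot be closed.

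Under the standing hypotheses, $\{x,a\}=[x,a]+\lambda(x)+\lambda(a)+s$. Suppose $x\in E$. Then $a=0$ gives $x=\lambda(x)+s$. After that, $\{x,a\}=x$ gives $\lambda(a)=-[x,a]$, and $\{a,x\}=x$ gives $\lambda(a)=[x,a]$. So $\lambda=0$, since $\mathrm{char}\,\mathbb{K}\neq 2$. Hence $E=\emptyset$ whenever $\lambda\neq 0$, whereas every $\mathfrak{a}^{(n)}$ is nonempty. In particular $s\in E$ iff $\lambda=0$, so the check you planned via \eqref{eq} cannot succeed.

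Nor can Lemma~\ref{lemma 9.2} be used as a black box. Its proof rests on ``$\overline{x}\in\overline{E}\Rightarrow x\in E$''. But $\{\overline{x},\overline{y}\}=\overline{x}$ only gives $\{x,y\}-x\in Z(\mathfrak{g})$; for $\kappa=\lambda=0$, $s=0$ the claim would force $Z(\mathfrak{g})=0$. Even for Lie algebras, the correct conclusion concerns $\mathfrak{a}^{(n+1)}$, not $\mathfrak{a}^{(n)}$.

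Your fallback fails too. Take $\mathfrak{g}=\mathbb{K}^2$ abelian, $\lambda(e_1)=e_2$, $\lambda(e_2)=0$, $\kappa=2\lambda$, $s=0$. Then \eqref{eq} and all standing hypotheses hold, and $\{a,b\}=\lambda(a+b)$. Moreover $(ad^{aff}_a)^{2+k}(x)=\lambda(a)$ for all $x$ and all $k\geq 0$, so every element is affinely ad-nilpotent in the sense you use (nilpotent linear part, constant stable value). Here $\mathfrak{Z}(\mathfrak{a})=\mathfrak{a}$, so your induction reaches a one-point quotient, which is affinely nilpotent. Yet $E=\emptyset$, and $\mathfrak{a}^{(m)}=\mathbb{K}e_2$ for every $m\geq 1$, a line rather than a point. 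So Lemma~\ref{lemma 9.2} is false as stated, and under your reading of the hypothesis so is the theorem; no argument can prove it in that form.

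What can be proved is the version in which the stable value in the definition of affine ad-nilpotency is literally the point of $E$. Then $E\neq\emptyset$ forces $\lambda=0$ and $E=\{s\}$ with $s\in Z(\mathfrak{g})$. So $\{a,b\}=[a,b]+s$, $(ad^{aff}_a)^n(x)=\mathrm{ad}_a^n(x)+s$, and $\mathfrak{a}^{(m)}=s+\mathcal{C}^m\mathfrak{g}$, where $\mathcal{C}^0\mathfrak{g}=\mathfrak{g}$ and $\mathcal{C}^m\mathfrak{g}=[\mathfrak{g},\mathcal{C}^{m-1}\mathfrak{g}]$. The claim is then exactly the classical Engel theorem for $\mathfrak{g}$ and needs no quotient step. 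If you keep one, replace Lemma~\ref{lemma 9.2} by $\mathcal{C}^n\mathfrak{g}\subseteq Z(\mathfrak{g})\Rightarrow\mathcal{C}^{n+1}\mathfrak{g}=0$, which raises the index by one.
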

\begin{proof}
    Let $\Bar{\mathfrak{a}}=ad^{aff}(\mathfrak{a})$ denote the image of $\mathfrak{a}$ under the affine adjoint map  $ad^{aff}:\mathfrak{a}\rightarrow Aff(\mathfrak{a})$. By hypothesis, every element of $\Bar{\mathfrak{a}}=ad^{aff}(\mathfrak{a})$ is affinely nilpotent map of $\mathfrak{a}$. Thus $\bar{\mathfrak{a}} \subseteq Aff(\mathfrak{a})$ consists of affinely nilpotent endomorphism of Lie affgebra $\mathfrak{a}$. Assume $\kappa=2\lambda$ and we start the proof for left affine adjoint. Then by \ref{lemma 9.3} there exist $v\in T_{o}\mathfrak{a}$ such that $\bar{\mathfrak{a}}. v=ad^{aff}(\mathfrak{a}).v=\{ad^{aff}_{a}(o) ~|~ ad^{aff}_{a} \in ad^{aff}(\mathfrak{a}), a\in \mathfrak{a}\}$ and $ad^{aff}_{a}(v)=ad^{aff}_{a}(o)$ implies $\{a, v\}=\{a, o\}=[a, o]+\kappa(a)-\lambda(a)+s=2\lambda(a)-\lambda(a)+s=\lambda(a)+s,$ and for right adjoint, by Lemma \ref{lemma 9.3}, there exist $v\in T_{o}\mathfrak{a}$ such that $\bar{\mathfrak{a}}. v=ad^{aff}(\mathfrak{a}).v=\{ad^{aff}_{a}(o) ~|~ ad^{aff}_{a} \in ad^{aff}(\mathfrak{a}), a\in \mathfrak{a}\}$ and $ad^{aff}_{a}(v)=ad^{aff}_{a}(o)$ implies $\{v, a\}=\{o, a\}=[o, a]+\kappa(o)+\lambda(a-o)+s=\lambda(a)+s.$ Thus, we obtain $\{a, v\}=\{v, a\}$ for all $a\in \mathfrak{a}$. This shows that $(o\neq)v\in \mathfrak{Z(a)}$. Thus, the dimension of $T_{o}\mathfrak{Z(a)}$ is greater than equal to $1$. Since $T_{o}(\mathfrak{a/Z(a)})\simeq T_{o}\mathfrak{a}/T_{o}\mathfrak{Z(a)}$, where $o\in \mathfrak{Z(a)}$, thus, the dimension of $\mathfrak{a/Z(a)}$ is less than dimension of $\mathfrak{a}$ and by using induction on dimension of $\mathfrak{a}$, we find that $\mathfrak{a/Z(a)}$ is affinely nilpotent Lie affgebra. Then by Lemma \ref{lemma 9.2}, we conclude that $\mathfrak{a}$ is an affinely nilpotent Lie affgebra.    
\end{proof}

\begin{center}{\bf Open Problems and Future Directions}\end{center}

In this paper, we established an Engel-type theorem for Lie affgebras of the form
$\mathfrak{a}(\mathfrak{g};\kappa=2\lambda,\lambda,s)$ by showing that affine ad-nilpotency of all elements
is equivalent to affine nilpotency of the affgebra. More generally, for Lie affgebras
$\mathfrak{a}(\mathfrak{g};\kappa,\lambda,s)$, we proved that affine nilpotency implies
affine ad-nilpotency of all elements.

However, the converse implication remains open in the general case. In particular,
we are unable at present to determine whether affine ad-nilpotency of all elements
implies affine nilpotency for arbitrary linear maps $\kappa$ and $\lambda$ satisfying the generalized derivation identity.

\begin{problem}
Let $\mathfrak{a}(\mathfrak{g};\kappa,\lambda,s)$ be a Lie affgebra over a field
$\mathbb{K}$.
Assume that every element of $\mathfrak{a}(\mathfrak{g};\kappa,\lambda,s)$ is
affinely ad-nilpotent. Does it follow that
$\mathfrak{a}(\mathfrak{g};\kappa,\lambda,s)$ is affinely nilpotent for arbitrary generalized derivation $(\kappa,\lambda)$ and $s$?
\end{problem}

An affirmative answer to this problem would yield a full affine analogue of
Engel’s theorem for Lie affgebras and significantly deepen the understanding of
their structure theory. On the other hand, a counterexample would reveal new
phenomena specific to the affine setting and clarify the role of the parameters
$\kappa$ and $\lambda$ in determining nilpotency properties. We expect that
further investigation of this problem will lead to new insights into affine and
heap-theoretic Lie structures.

\begin{center}
 {\bf ACKNOWLEDGEMENT}
 \end{center}
 
This research was supported by the Core Research Grant (CRG) of the Anusandhan National Research Foundation (ANRF), formerly the Science and Engineering Research Board (SERB), under the Department of Science and Technology (DST), Government of India (Grant No.~CRG/2022/005332). The authors gratefully acknowledge the financial support received from the aforementioned agency. Tarik Anowar also gratefully acknowledges financial support from the UGC-JRF (MANF), while Sayan Thokdar gratefully acknowledges financial support from the UGC-JRF.

\end{document}